\documentclass[11pt,leqno,oneside]{amsart}
\usepackage{amsmath,amsfonts,amssymb,amsthm,enumerate,mathtools}
\usepackage{color,framed,mathscinet}
\usepackage[colorlinks, linkcolor=teal, citecolor=violet]{hyperref}
\usepackage[a4paper, left=2cm, right=2cm, top=3cm, bottom=3cm]{geometry}
\usepackage[numbers,sort&compress]{natbib}
\usepackage{doi}
\urlstyle{rm}
\usepackage[table]{xcolor}
\usepackage[a-2u]{pdfx}
\usepackage{comment}
\usepackage{multirow}
\usepackage{enumitem}
\setlist[enumerate]{label={\rm(\roman*)},leftmargin=6ex}
%%%%%%%%%%%%%%%%%%%%%%%%%%%%%%%%%%%%%%%%%%%%%%%%%%%%%
\hyphenation{re-ar-ran-ge-ment-in-va-ri-ant}

\newcommand{\R}{\mathbb{R}}
\newcommand{\rn}{{\mathbb{R}^n}}
\newcommand{\N}{\mathbb{N}}
\newcommand{\MM}{\mathcal{M}}
\newcommand{\FF}{\mathcal{F}}
\newcommand{\EE}{\mathcal{E}}
\newcommand{\RR}{\mathcal{R}}
\newcommand{\HH}{\mathcal{H}}
\newcommand{\DD}{\mathcal{D}}
\newcommand{\Hg}{\HH^{n-1}_{\gamma_n}}
\renewcommand{\d}{{\mathrm d}}
\newcommand{\dHg}{\d\Hg}

\newcommand{\kappab}{\kappa_\beta}
\newcommand{\med}{\operatorname{med}}
\newcommand{\mv}{\operatorname{mv}}
\newcommand{\m}{\operatorname{m}}
\newcommand{\sgn}{\operatorname{sgn}}

\DeclareMathOperator{\Exp}{\operatorname{Exp}}

\let\tilde\widetilde

\usepackage{xspace}
\makeatletter
\DeclareRobustCommand\onedot{\futurelet\@let@token\@onedot}
\def\@onedot{\ifx\@let@token.\else.\null\fi\xspace}
\def\eg{e.g\onedot} 
\def\ie{i.e\onedot} 
 
\def\ae{a.e\onedot} 
\makeatother

\newtheoremstyle{MyPlain}{}{}{\itshape}{}{\bfseries}{.}{5pt plus 4pt minus 3pt}{\thmname{#1}\thmnumber{ #2}\thmnote{ \textbf{[#3]}}}
\theoremstyle{MyPlain}
\newtheorem{theorem}{Theorem}[section]
\newtheorem{theoremalph}{Theorem}

\newtheorem{lemma}[theorem]{Lemma}
\newtheorem{proposition}[theorem]{Proposition}

\newtheoremstyle{MyRemark}{}{}{\upshape}{}{\bfseries}{.}{5pt plus 1pt minus 1pt}{}
\theoremstyle{MyRemark}
\newtheorem{remark}[theorem]{Remark}

\numberwithin{equation}{section}

\expandafter\let\expandafter\oldproof\csname\string\proof\endcsname
\let\oldendproof\endproof
\renewenvironment{proof}[1][\proofname]{%
  \oldproof[{{\bf #1.}}]%
}{\oldendproof}

\makeatletter
\newcommand{\opnorm}{\@ifstar\@opnorms\@opnorm}
\newcommand{\@opnorms}[1]{%
	\left|\mkern-1.5mu\left|\mkern-1.5mu\left|
	#1
	\right|\mkern-1.5mu\right|\mkern-1.5mu\right|
}
\newcommand{\@opnorm}[2][]{%
  \mathopen{#1|\mkern-1.5mu#1|\mkern-1.5mu#1|}
  #2
  \mathclose{#1|\mkern-1.5mu#1|\mkern-1.5mu#1|}
}
\makeatother

%% cases hack

%% notation shortcuts

\newcommand{\ib}{\frac{1}{\beta}}
\newcommand{\iib}{\frac{2}{\beta}}

\newcommand{\RG}{(\rn,\gamma_n)}
\newcommand{\WexpLb}{W^1\exp L^\beta\RG}
\newcommand{\expLb}{\exp L^\beta\RG}
\newcommand{\dgn}{\d\gamma_n}

\newcommand{\normI}[1]{\opnorm*{\frac{1}{\,I\,}}_{{#1}\bigl(\Phi(t),\frac12\bigr)}}
\newcommand{\normIB}{\normI{L^{\tilde B}}}

\newcommand{\EF}{\varphi}

\newcommand{\expb}{\Exp^\beta}

\makeatletter
\def\paragraph{\bigskip\@startsection{paragraph}{4}%
  \z@\z@{-\fontdimen2\font}%
  {\normalfont\bfseries}}
\makeatother

%%%%%%%%%%%%%%%%%%%%%%%%%%%%%%%%%%%%%%%%%%%%%%%%%%%%%

\begin{document}

\title{On the existence of extremals for Moser type inequalities in Gauss space}

\begin{abstract}
The existence of  an extremal in an exponential Sobolev type inequality, with
optimal constant, in Gauss space is established. A key step in the proof is an
augmented version of the relevant inequality, which, by contrast, fails for a
parallel classical inequality by Moser in the Euclidean space.
\end{abstract}

\author{Andrea Cianchi\textsuperscript{1}}
\address{\textsuperscript{1}Dipartimento di Matematica e Informatica ``Ulisse Dini'',
University of Florence,
Viale Morgagni 67/A, 50134
Firenze,
Italy}
\email{andrea.cianchi@unifi.it}
\urladdr{0000-0002-1198-8718}

\author{V\'\i t Musil\textsuperscript{1}}
\email{vit.musil@unifi.it}
\urladdr{0000-0001-6083-227X}

\author{Lubo\v s Pick\textsuperscript{2}}
\address{\textsuperscript{2}Department of Mathematical Analysis,
Faculty of Mathematics and Physics,
Charles University,
So\-ko\-lo\-vsk\'a~83,
186~75 Praha~8,
Czech Republic}
\email{pick@karlin.mff.cuni.cz}
\urladdr{0000-0002-3584-1454}

%\date{\today}

\subjclass[2020]{46E35, 28C20}
\keywords{
Gaussian Sobolev inequalities; Gauss measure;
Exponential inequalities; Existence of extremals;
Moser type inequalities;
Orlicz spaces.}

\maketitle

\bibliographystyle{abbrv_doi}

\section*{How to cite this paper}
\noindent
This paper has been accepted to \emph{International Mathematics Research Notices}
and the final publication is available at
\begin{center}
	\url{https://doi.org/10.1093/imrn/rnaa165}.
\end{center}
Should you wish to cite this paper, the authors would like to cordially ask you
to cite it appropriately.

\section{Introduction and main results}

A celebrated result by Moser \cite{Mos:70}, dealing with the borderline case of
the Sobolev embedding theorem \cite{Poh:65, Tru:67, Yud:61}, asserts that if
$\Omega$ is an open set in $\rn$ with finite Lebesgue measure $|\Omega|$, then
\begin{equation} \label{supmoser}
	\sup_u \, \int_{\Omega} \exp^{n'}(\alpha_n |u|)\, \d x < \infty\,,
\end{equation}
where the supremum is extended over all functions $u \in W^{1,n}_0(\Omega)$
satisfying the constraint
\begin{equation} \label{supmoser1}
	\int_{\Omega} |\nabla u|^n  \,\d x \le 1\,.
\end{equation}
Here, $\exp^\beta (t) = e^{t^\beta}$ for $\beta>0$ and $t\geq 0$, and
$\alpha_n = n\omega_n^{1/n}$, where $\omega_n$ denotes the Lebesgue measure of
the unit ball in $\rn$. Moreover, the constant $\alpha_n$ in equation
\eqref{supmoser} is sharp, in the sense that the supremum  is infinite if
$\alpha_n$ is replaced by any larger constant.

An additional remarkable feature of inequality \eqref{supmoser} is that, for
any set $\Omega$ as above, the supremum is, in fact, a maximum. Namely, there
exists a function $u$ at which the supremum is attained. The first contribution
in this connection is \citep{Car:86}, where the case when $\Omega$ is a ball is
considered. The result for arbitrary domains is established in \citep{Flu:92}
for $n=2$, and in \citep{Lin:96} for any $n \geq 2$. However, the supremum in
\eqref{supmoser} and its extremals are still unknown, even in   a ball. A radially symmetric extremal is shown to exist  in this special domain,
but  the existence of additional non-symmetric extremals is not excluded.
Notice that, by contrast, extremals are absent in the classical Sobolev inequality
for $W^{1,p}_0(\Omega)$, for $1<p<n$, with optimal exponent and constant, in
any domain $\Omega\neq\rn$ -- see, for example, \citep[Chapter~I, Section~4.7]{Str:90}.

A major difficulty in the proof of the existence of an extremal in
\eqref{supmoser} is related to the lack of compactness of the embedding
\begin{equation}\label{trudinger}
	W^{1,n}_0(\Omega) \to \exp L^{n'}(\Omega).
\end{equation}
Here, given $\beta>0$, we denote by $\exp L^{\beta}(\Omega)$ the Orlicz space
associated with a Young function equivalent to $\exp^\beta (t)$  near infinity.
To be more specific, sequences $\{u_k\} \subset W^{1,n}_0(\Omega)$ such that
$\int_{\Omega} |\nabla u_k|^n  \,\d x \le 1$ need not  enjoy the property that
the sequence $\{\exp^{n'}(\alpha_n |u_k|)\}$ be uniformly integrable.
This prevents one from applying the classical direct methods of the calculus of
variations to pass to the limit in $\int_{\Omega} \exp^{n'}(\alpha_n
|u_k|)\, \d x$ as $k \to \infty$. The proof of the existence of extremals thus calls
for the use of concentration-compactness techniques.

Moser's inequality has inspired a number of investigations on sharp
exponential inequalities associated with borderline embeddings of Sobolev
type.  The contributions  \citep{Ada:88, Alb:08, Alv:96, Bal:03, Bec:93,
Bra:13, Cer:10, Cia:05, Cia:08, Coh:01, Fon:93, Fon:11, Fon:12, Fon:18,
Hen:03, Ish:11, Kar:16, Lam:13, Lec:05, Li:05, Li:08, Mas:15, Par:19, Ruf:05,
Yan:12} just supply  a taste of this rich line of research.

Unconventional counterparts of Moser's inequality in Gauss space have recently
been offered in \citep{Cia:19}. Recall that the Gauss space $\RG$ is $\rn$
endowed with the Gauss probability measure $\gamma_n$ obeying
\begin{equation*}
	\dgn(x) = (2\pi)^{-\frac{n}{2}} e^{-\frac{|x|^2}{2}}\,\d x
		\quad\text{for $x\in \rn$}.
\end{equation*}
One version of the Gaussian exponential inequalities in question tells us that,
given any $\beta \in (0, 2]$ and $M>1$,
\begin{equation}\label{E:sup-integral}
	\sup_u\, \int_{\rn} \exp^{\frac{2\beta}{2+\beta}}(\kappab |u|)\,\dgn < \infty,
\end{equation}
where the supremum is extended over all weakly differentiable functions
$u\colon\rn\to\R$ fulfilling the gradient bound
\begin{equation} \label{E:nabla-integral}
	\int_{\rn} {\rm Exp}^\beta(|\nabla u|)\,\dgn
		\le M
\end{equation}
and the normalization condition
\begin{equation}\label{E:mu}
	\m(u)=0,
\end{equation}
see \cite[Theorem 1.1]{Cia:19}.  Here,
\begin{equation} \label{E:kappab}
	\kappab = \frac{1}{\sqrt{2}}+\frac{\sqrt{2}}{\beta},
\end{equation}
$\m(u)$ stands for either the mean value $\mv (u)$, or the
median $\med (u)$ of $u$ over $\RG$ (see Section~\ref{sec:symm}
for a precise definition), and
the function $\Exp^\beta$ is the
convex envelope of $\exp^\beta$, which obviously agrees with
$\exp^\beta$   near infinity for every $\beta>0$, and globally if $\beta \geq 1$.

The constant $\kappab$ is sharp in \eqref{E:kappab}, in an even stronger sense
than $\alpha_n$ in \eqref{supmoser}. Indeed, if $\kappab$ is replaced by any
larger constant, then for every $M>1$ there exists a function $u$, fulfilling
conditions \eqref{E:nabla-integral} and \eqref{E:mu}, such that
\begin{equation*}
	\int_{\rn} \exp^{\frac{2\beta}{2+\beta}}(\kappab |u|)\,\dgn
		= \infty.
\end{equation*}
Another diversity between the Euclidean and the Gaussian inequalities is in
that the value of $M$ in \eqref{E:nabla-integral} is surprisingly irrelevant,
whereas the value $1$ in \eqref{supmoser1} is critical.

Inequality \eqref{E:sup-integral} provides us with quantitative information on
the Gaussian Sobolev embedding
\begin{equation}\label{expemb}
	W^1\exp L^\beta\RG \to \exp L^{\frac{2\beta}{2+\beta}}\RG\,,
\end{equation}
where $W^1\exp L^\beta\RG$ denotes the Sobolev space built upon the Orlicz
space $\exp L^\beta\RG$. Embedding \eqref{expemb} is a parallel of
\eqref{trudinger} in Gauss space. Notice that the spaces $\exp L^\beta\RG$
and $\exp L^{\frac{2\beta}{2+\beta}}\RG$ appearing in \eqref{expemb} are
optimal \cite[Proposition~4.4 (iii)]{Cia:09}. In particular, in contrast with
\eqref{trudinger}, the degree of integrability of a function $u$  can be
weaker than that of its gradient $|\nabla u|$ in Gauss space. This is due to
the decay of the measure $\gamma_n$ near infinity.

Let us point out that  embedding \eqref{expemb} does hold, in fact, for every
$\beta >0$, as shown in \citep{Cia:09}. Yet, if $\beta >2$, the validity of
inequality  \eqref{E:sup-integral} turns out to be sensitive to the value of
$M$: the supremum is still finite provided that $M$ is small enough, whereas it
becomes infinite if $M$ is too large. This phenomenon is pointed out in
\citep[Theorem 1.1]{Cia:19}.  The exact threshold for $M$ can only be
characterized in a very implicit form, which does not allow for its
identification. It is therefore unclear for which constants $M$ problem
\eqref{E:sup-integral}--\eqref{E:mu} is meaningful.

Embeddings \eqref{expemb} continue, at a scale of exponential
integrability, the family of classical Gaussian embeddings
\begin{equation} %\label{gross}
	W^{1,p}\RG \to L^p(\log L)^{\frac p2}\RG,
\end{equation}
for $p\in [1, \infty)$, where $ L^p(\log L)^{\frac p2}\RG$ denotes the Orlicz
space built upon  any Young function equivalent to $t^p (\log t)^{\frac p2}$
near infinity. They have their roots in the seminal paper by Gross
\citep{Gro:75}, dealing with the case $p=2$. The extension to $p \neq 2$ can be
found in  \citep{Ada:79}. Further developments and related results are the
subject of a vast literature, including \citep{Ada:79, Bar:06, Bar:08, Bob:98, Bra:07,
Bob:97, Car:01, Cip:00, Fei:75, Fuj:11, Mil:09, Pel:93, Rot:85}.

The purpose of the present paper is to investigate  the existence of an
extremal in the Gaussian exponential inequality \eqref{E:sup-integral}. We give
an affirmative answer to this question, thus providing an analogue of the
existence result for Moser's inequality in the Euclidean space. Moreover, any
possible maximizer $u$ of \eqref{E:sup-integral} is shown to be necessarily a
one-variable function. By contrast, as mentioned above, any (yet qualitative)
characterization of extremals seems to be missing for Moser's inequality.
%except the fact one radially symmetric extremal exists when $\Omega$ is a ball.

\begin{theorem}[Existence of maximizers] \label{T:maximizers-integral}
Let $\beta\in(0,2]$ and $M>1$. Then the supremum in  \eqref{E:sup-integral} is
attained.  Moreover, the level sets of  any extremal function $u$ are
half-spaces; namely, there exist an increasing function  $h\colon\R\to\R$ and
$\xi\in\rn$ such that
\begin{equation} \label{sep220}
	u(x) = h (x \cdot \xi)
		\quad\text{for \ae~$x\in \rn$.}
\end{equation}
Here, the dot $``\cdot"$ stands for scalar product in $\rn$.
\end{theorem}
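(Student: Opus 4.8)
The plan is to run the direct method of the calculus of variations, with the decisive extra ingredient being an \emph{augmented} version of inequality \eqref{E:sup-integral} in which the gradient constraint \eqref{E:nabla-integral} is replaced by a strengthened one that provides quantitative control on how close $|\nabla u|$ is to saturating the bound $M$. First I would fix $\beta\in(0,2]$ and $M>1$, call $S$ the supremum in \eqref{E:sup-integral}, and take a maximizing sequence $\{u_k\}$ of weakly differentiable functions with $\m(u_k)=0$ and $\int_{\rn}\Exp^\beta(|\nabla u_k|)\,\dgn\le M$. Since \eqref{E:sup-integral} holds and the $u_k$ have vanishing mean (or median), I would first pass through a symmetrization step: replacing $u_k$ by a suitable one-dimensional rearrangement along a fixed direction $\xi$ does not decrease the exponential integral (the Gaussian isoperimetric inequality / Ehrhard-type rearrangement makes the gradient functional non-increasing and the level-set measures invariant), so one may assume from the outset that each $u_k$ depends on a single variable $x\cdot\xi$ and is monotone. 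This both explains the structural conclusion \eqref{sep220} for extremals and reduces the whole problem to a one-dimensional variational problem on $(\R,\gamma_1)$, where the embedding $W^1\exp L^\beta\to\exp L^{\frac{2\beta}{2+\beta}}$ and its rearrangement-invariant reduction are available from \citep{Cia:19, Cia:09}.

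Next I would extract weak limits: along a subsequence $u_k\rightharpoonup u$ in the natural reflexive replacement of the (non-reflexive) Orlicz–Sobolev space — concretely, $\nabla u_k\rightharpoonup \nabla u$ in every $L^q(\gamma_n)$ with $q<\infty$, and $u_k\to u$ a.e. and in every such $L^q$, using that the one-dimensional monotone functions are precompact in measure and that the gradient bound forces equi-integrability of $|\nabla u_k|^q$ for each finite $q$. Lower semicontinuity of the convex functional $\int_{\rn}\Exp^\beta(|\nabla \cdot|)\,\dgn$ gives $\int_{\rn}\Exp^\beta(|\nabla u|)\,\dgn\le M$, and $\m(u)=0$ is preserved by the a.e. convergence. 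The crux is then to show
\begin{equation*}
	\int_{\rn}\exp^{\frac{2\beta}{2+\beta}}(\kappab|u|)\,\dgn = S,
\end{equation*}
i.e.\ that no mass escapes in the exponential integral. By Vitali's theorem this amounts to proving that $\{\exp^{\frac{2\beta}{2+\beta}}(\kappab|u_k|)\}$ is uniformly integrable; equivalently, that the maximizing sequence does not concentrate. This is exactly where the augmented inequality enters: if a fraction $\delta>0$ of the total gradient budget $M$ is spent away from the region where $u_k$ is large, then the effective constant governing the exponential integral over the large-values region is strictly smaller than $\kappab$, which (quantitatively, via the sharpness statement for $\kappab$ recalled after \eqref{E:kappab}) forces the exponential integral of $u_k$ to stay bounded away from $S$ — contradicting maximality. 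Conversely, if essentially the whole budget is concentrated near the large-values set, a direct estimate on the one-dimensional profile shows the tail of $\exp^{\frac{2\beta}{2+\beta}}(\kappab|u_k|)$ is uniformly small. Combining the two cases yields uniform integrability, hence $u$ is an extremal.

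The main obstacle will be making the augmented inequality both true and strong enough to preclude concentration: one must augment \eqref{E:sup-integral} by an additional term — for instance a bound of the form
\begin{equation*}
	\int_{\rn}\exp^{\frac{2\beta}{2+\beta}}(\kappab|u|)\,\dgn
		\le C\Bigl(1 + \int_{\rn}\bigl(\Exp^\beta(|\nabla u|) - \text{(lower-order)}\bigr)\,\dgn\Bigr)
\end{equation*}
capturing the ``gap'' between $|\nabla u|$ and the extremal profile — and prove it by a careful analysis of the sharp one-dimensional estimate behind \eqref{E:sup-integral}, tracking how a deficit in the gradient integral propagates to a deficit in the exponential integral. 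The subtlety, highlighted in the abstract, is that the analogous augmented inequality \emph{fails} in the Euclidean Moser setting, so the argument must genuinely exploit the faster decay of $\gamma_n$ at infinity (equivalently, the subcriticality of the value $M$ for $\beta\le 2$ noted after \eqref{expemb}); getting the right formulation of the augmented functional and verifying it is the technical heart of the proof. Once that is in hand, the direct method closes as above, and the one-dimensional reduction performed at the start delivers the representation \eqref{sep220} for every extremal.
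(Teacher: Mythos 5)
Your skeleton (direct method plus an ``augmented'' inequality to rule out escaping mass) is in the right spirit, but the two decisive steps are missing or rest on a false analogy. First, the uniform-integrability step. Your proposed mechanism is a concentration-compactness dichotomy keyed to the gradient budget $M$: a fraction of the budget spent away from the large-values region should lower the ``effective constant'' below $\kappab$. That is the Euclidean Moser mechanism and it does not transfer: for $\beta\in(0,2]$ both the constant $\kappab$ and the finiteness of the supremum in \eqref{E:sup-integral} are independent of $M$ (as the paper stresses), so a deficit in the budget does not degrade the constant, and the deficit-type augmented inequality you write down is neither precisely formulated nor proved --- you yourself defer it as ``the technical heart''. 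The augmentation the paper actually uses is of a different nature and acts on the left-hand side: inequality \eqref{E:sup-improved} of Theorem~\ref{T:integral-form-improved}, in which the integrand carries an extra factor $\EF(|u|)$ diverging at infinity, under the growth conditions \eqref{E:improvement-conditions-beta-less-than-2} or \eqref{E:improvement-conditions-beta-2}; its proof is the hard part and rests on the sharp asymptotics of the Orlicz norm of $1/I$ (Lemmas~\ref{L:Holder-estimates}--\ref{L:F-B-expansion}). Given that, uniform integrability of $\{\exp^{\frac{2\beta}{2+\beta}}(\kappab|u_k|)\}$ follows at once from de la Vall\'ee-Poussin, and Vitali's theorem closes the existence argument (together with the compact embedding into $L^1\RG$, weak$^*$ compactness of the gradients in $\expLb$, Serrin's semicontinuity, and convergence of medians); no dichotomy is needed. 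As written, your ``no mass escapes'' step is unsupported.

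Second, the structural claim. Symmetrizing the maximizing sequence at the outset can only yield the existence of \emph{one} maximizer that is a monotone function of $x\cdot\xi$; the theorem asserts that \emph{every} extremal has the form \eqref{sep220}, and your sketch does not prove this. The paper needs two further ingredients: (i) the equality case in the Ehrhard--P\'olya--Szeg\H{o} principle (Proposition~\ref{P:PSintegral}, proved in the paper precisely because it is not available in the literature), and (ii) a perturbation argument, because the constraint \eqref{E:nabla-integral} is an inequality: if $u$ is a maximizer whose level sets are not half-spaces, then $\int_{\rn}\Exp^\beta(|\nabla u^\bullet|)\,\dgn<M$ strictly, which by itself is no contradiction. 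The paper exploits this slack by replacing the derivative $\phi'$ of the one-dimensional profile with $\bigl((\phi')^\beta+\log\lambda\bigr)^{1/\beta}$ for $\lambda>1$ close to $1$ (with a further adjustment $\theta\in(1,\lambda)$ on a half-line to restore $\mv=0$ in the mean-value case), keeping the constraint while strictly increasing the exponential integral, contradicting maximality. Without (i) and (ii), the ``any extremal'' half of the statement is not established.
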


A key novelty in the proof of Theorem~\ref{T:maximizers-integral} is that,
unlike the Euclidean case, the   maximization problem  for the Gaussian
inequality \eqref{E:sup-integral} can be attacked via the direct methods of the
calculus of variations. This is possible thanks to an improvement, of
independent interest, of inequality \eqref{E:sup-integral}. Despite the fact
that, like \eqref{trudinger}, embedding \eqref{expemb} is non-compact, we are
able to show that a uniform integrability property, somewhat stronger than
\eqref{E:sup-integral}, holds under the same constraints
\eqref{E:nabla-integral} and \eqref{E:mu}. Specifically, we prove that if
$\EF\colon[0, \infty)\to [0,\infty)$  is  an increasing function that diverges
to $\infty$ as $t\to\infty$ with a sufficiently mild growth, then
\begin{equation}\label{E:sup-improved}
	\sup _u \int_{\rn} \exp^{\frac{2\beta}{2+\beta}}(\kappab |u|)\varphi (|u|)\,\dgn
		< \infty,
\end{equation}
where the supremum is extended over all weakly differentiable functions
$u\colon\rn\to\R$ satisfying conditions \eqref{E:nabla-integral} and
\eqref{E:mu}.
This is the content of our second main result, where an essentially optimal
growth condition on $\varphi$ for \eqref{E:sup-improved} to hold is
exhibited.

\begin{theorem}[Improved integrability] \label{T:integral-form-improved}
Let $\EF\colon[0,\infty)\to [0,\infty)$
be a continuous increasing function.
Assume that either $\beta\in(0,2)$ and
\begin{equation} \label{E:improvement-conditions-beta-less-than-2}
		\int^\infty t^{-\frac{4}{4-\beta^2}}\EF(t) \,\d t <\infty,
\end{equation}
or $\beta=2$ and
\begin{equation} \label{E:improvement-conditions-beta-2}
		\int^\infty e^{-\frac{1}{8}(\log t)^2}\EF(t) \,\d t <\infty
\end{equation}
Then, inequality \eqref{E:sup-improved} holds for any choice of $M>1$ in
condition \eqref{E:nabla-integral}.
\\
Conversely, if $\beta\in(0,2)$ and
\begin{equation} \label{E:improvement-sharpness-beta-less-than-2}
	\int^\infty t^{-\frac{4+\varepsilon}{4-\beta^2}}
		\EF(t) \,\d t = \infty,
\end{equation}
or $\beta=2$ and
\begin{equation} \label{E:improvement-sharpness-beta-2}
	\int^\infty
		e^{-\frac{1+\varepsilon}{8}(\log t)^2}
		\EF(t) \,\d t = \infty
\end{equation}
for some $\varepsilon>0$, then  inequality \eqref{E:sup-improved} fails for any
choice of $M>1$  in \eqref{E:nabla-integral}, since there exists a
function $u$ satisfying conditions \eqref{E:nabla-integral} and \eqref{E:mu},
and such that
\begin{equation} \label{E:integral-improved-sharp}
	\int_{\rn}
		\exp{(\kappab |u|)^{\frac{2\beta}{2+\beta}}}
			\EF(|u|)\,\dgn
		= \infty.
\end{equation}
\end{theorem}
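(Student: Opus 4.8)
The plan is to handle both directions through Gaussian (Ehrhard) symmetrization, which reduces the problem to a one-dimensional Hardy-type estimate whose \emph{second-order} asymptotics control the improvement; the leading term of that estimate is exactly what yields \eqref{E:sup-integral} with the optimal constant $\kappab$ in \cite{Cia:19}.

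\emph{Sufficiency.} The left-hand side of \eqref{E:sup-improved} depends on $u$ only through its distribution function, while the functional in \eqref{E:nabla-integral} is not increased by symmetrization (Pólya--Szegő principle in Gauss space); so one may assume $u(x)=v(x\cdot\xi)$ and, passing to the signed decreasing rearrangement $u^\circ$ on $(0,1)$ with $k=-(u^\circ)'\ge0$, conditions \eqref{E:nabla-integral}--\eqref{E:mu} become $u^\circ(\tfrac12)=0$ and $\int_0^1\Exp^\beta\bigl(k(r)I(r)\bigr)\,\d r\le M$, where $I$ is the Gaussian isoperimetric function, $I(r)=r\sqrt{2\log\frac1r}\bigl(1-\frac{\log\log(1/r)}{4\log(1/r)}+O(\frac1{\log(1/r)})\bigr)$ as $r\to0^+$. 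The heart of the matter is the sharp pointwise bound: for a constant $C_M$ and all admissible $u$,
\[
	\exp^{\frac{2\beta}{2+\beta}}\bigl(\kappab\,u^\circ(s)\bigr)\ \le\
	\begin{cases}
		\dfrac{C_M}{s}\Bigl(\log\frac es\Bigr)^{-\frac{4-\beta}{2(2-\beta)}}, & \beta\in(0,2),\\
		\dfrac{C_M}{s}\exp\Bigl(-\dfrac18\bigl(\log\log\tfrac es\bigr)^2\Bigr), & \beta=2,
	\end{cases}
	\qquad s\in\Bigl(0,\tfrac12\Bigr).
\]
This is obtained by maximizing $u^\circ(s)=\int_s^{1/2}k$ under the modular constraint, equivalently by an Orlicz--Hölder estimate against the Young function complementary to $\Exp^\beta$: the near-extremal $k$ satisfies $\bigl(k(r)I(r)\bigr)^\beta=\log\frac1r+O(\log\log\tfrac1r)$, and the $\log\log$-contributions coming from $I$ and from $k$ combine into the displayed exponent. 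The leading term gives $\kappab u^\circ(s)=(\log\frac1s)^{\frac{2+\beta}{2\beta}}(1+o(1))$, which recovers the sharp constant, and -- decisively -- the whole bound is independent of $M$. Inserting it into \eqref{E:sup-improved}, using that $\varphi$ is increasing together with $u^\circ(s)\le C_M(\log\frac es)^{\frac{2+\beta}{2\beta}}$, and changing variables $t=u^\circ(s)$ (so that $\log\frac1s\sim(\kappab t)^{\frac{2\beta}{2+\beta}}$ when $\beta<2$ and $\log\frac1s\sim\sqrt2\,t$ when $\beta=2$) turns the integral, after an elementary computation, into a constant multiple of $\int^\infty t^{-\frac4{4-\beta^2}}\varphi(t)\,\d t$, respectively $\int^\infty e^{-\frac18(\log t)^2}\varphi(t)\,\d t$, which is finite by hypothesis; the set $\{u<0\}$ is treated symmetrically.

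\emph{Sharpness.} Conversely, I would exhibit $u(x)=v(x\cdot\xi)$ with $v$ nondecreasing, $v\equiv0$ to the left of some point $\tau_0$ and, to its right, $v'(\tau)=\bigl(\tfrac{\tau^2}2-g(\tau)\bigr)^{1/\beta}$, where $g$ grows just fast enough for $\int^\infty e^{-g(\tau)}\,\d\tau<\infty$, e.g. $g(\tau)=\log\tau+2\log\log\tau$ (further iterated logarithms may be inserted). Then $\int_\rn\Exp^\beta(|\nabla u|)\,\dgn$ equals a constant multiple of $\int_{\tau_0}^\infty e^{-g(\tau)}\,\d\tau<\infty$, so after rescaling the constant \eqref{E:nabla-integral} holds for the prescribed $M>1$, while \eqref{E:mu} holds because $v(0)=0$. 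Running the asymptotic analysis of the previous step as a two-sided estimate, this $u$ makes
\[
	\int_\rn\exp^{\frac{2\beta}{2+\beta}}(\kappab|u|)\,\varphi(|u|)\,\dgn
\]
at least a constant multiple of $\int^\infty t^{-\frac4{4-\beta^2}}(\log t)^{-A}\varphi(t)\,\d t$ for a suitable $A>0$ when $\beta<2$ (and of $\int^\infty e^{-\frac18(\log t)^2}(\log\log t)^{-A}\varphi(t)\,\d t$ when $\beta=2$). Since $t^{-\frac4{4-\beta^2}}(\log t)^{-A}\ge t^{-\frac{4+\varepsilon}{4-\beta^2}}$, respectively $e^{-\frac18(\log t)^2}(\log\log t)^{-A}\ge e^{-\frac{1+\varepsilon}8(\log t)^2}$, for $t$ large, this integral is infinite whenever \eqref{E:improvement-sharpness-beta-less-than-2}, respectively \eqref{E:improvement-sharpness-beta-2}, holds, yielding \eqref{E:integral-improved-sharp}.

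The main obstacle is the sharp pointwise estimate. Its leading term must reproduce the optimal constant $\kappab$, and on top of that the \emph{coefficient} of the logarithmic correction has to be identified exactly -- both the expansion of the isoperimetric function $I$ and the precise shape of the near-extremal gradient profile feed into it, and a slip here displaces the exponent in the conditions on $\varphi$. Doing this bookkeeping uniformly over all admissible $u$, rather than only along an extremal sequence, and keeping the estimate reversible so that it also drives the sharpness construction, is where the work concentrates; the symmetrization reduction and the closing changes of variables are routine by comparison.
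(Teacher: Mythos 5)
Your strategy is the same as the paper's: reduce to one dimension through the Gaussian symmetrization/H\"older mechanism, extract \emph{second-order} asymptotics of the resulting extremal quantity, and prove sharpness with the explicit profile $v'(\tau)=\bigl(\tfrac{\tau^2}{2}-\log\tau-2\log\log\tau\bigr)^{1/\beta}$ (the paper uses exactly this $f$, but meets \eqref{E:nabla-integral} by enlarging $\tau_0$, using $M>1$, rather than by ``rescaling'', which would perturb the asymptotics you need). Your claimed pointwise bound, with exponent $\tfrac{4-\beta}{2(2-\beta)}$ for $\beta<2$ and the $\tfrac18(\log\log)^2$ decay for $\beta=2$, agrees with what the paper's Lemma~\ref{L:F-B-expansion} delivers, and the closing change of variables does produce the exponent $\tfrac{4}{4-\beta^2}$. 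The genuine gap is that this bound is precisely the theorem's analytic core and you only assert it: the justification via ``the near-extremal $k$ satisfies $(kI)^\beta=\log\tfrac1r+O(\log\log\tfrac1r)$'' is a computation along one profile, not an estimate uniform over the constraint set. The paper makes it uniform by writing $u^\circ(\Phi(t))\le\|\nabla u\|_{L^B\RG}\,\normIB$ up to an additive constant and then computing $\normIB$ exactly by Young-conjugate duality (Lemma~\ref{L:I-norm-general}), which introduces an implicit multiplier $\lambda_t$; identifying the asymptotics of $\lambda_t$ (Lemma~\ref{L:lambda-t-asymp}) and then expanding the norm to second order (Lemma~\ref{L:I-norm-sharp}) is the bulk of Section~3 and is exactly what your proposal leaves open.

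There is also a concrete flaw in how you close the sufficiency step when $\beta=2$: you insert $u^\circ(s)\le C_M(\log\tfrac es)^{\frac{2+\beta}{2\beta}}$ with an unspecified constant $C_M$ into $\EF$. For $\beta<2$ the weight is a pure power, so a dilation inside $\EF$ is harmless; but the condition $\int^\infty e^{-\frac18(\log t)^2}\EF(t)\,\d t<\infty$ is \emph{not} stable under $\EF\mapsto\EF(C\,\cdot)$ with $C>1$, since the substitution creates a factor $t^{\frac{\log C}{4}}$ (take $\EF(t)=e^{\frac18(\log t)^2}t^{-1}(\log t)^{-2}$, increasing for large $t$: the hypothesis holds, yet $\int e^{-\frac18(\log t)^2}t^{a}\EF(t)\,\d t=\infty$ for every $a>0$). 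So you must either prove the bound with the sharp constant, $\kappab\,u^\circ(s)\le(1+o(1))\log\tfrac1s$, i.e.\ $u^\circ(s)\le\frac{1+o(1)}{\sqrt2}\log\tfrac1s$, after which the dilation goes the harmless way because $\tfrac1{\sqrt2}<1$, or keep the second-order term $-\log t\log\log t$ of the expansion, whose superpolynomial decay absorbs such factors; the paper does both (Lemma~\ref{L:F-B-expansion} together with the $(\delta-1)\log t\log\log t$ device in the final proof). A minor related point in your sharpness step: for $\beta=2$ the construction actually yields a correction factor of size $e^{-c\log t\log\log t}=t^{-c\log\log t}$, not merely $(\log\log t)^{-A}$; the conclusion is unaffected, since this is still dominated by $e^{\frac{\varepsilon}{8}(\log t)^2}$ for large $t$.
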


\begin{remark}
The conclusion of Theorem~\ref{T:integral-form-improved} does not contradict
the fact that $\exp L^{\frac{2\beta}{2+\beta}}\RG$  is the optimal Orlicz
target for embeddings of $W^{1}\exp L^\beta\RG$. Indeed, any  Young function
equivalent to $\Exp^{\frac{2\beta}{2+\beta}}(\kappa t)\EF(t)$ near infinity,
where $\kappa$ is any positive constant and  $\EF$ fulfills condition
\eqref{E:improvement-conditions-beta-less-than-2} or
\eqref{E:improvement-conditions-beta-2}, is equivalent (in the sense of Young
functions) to $\Exp^{\frac{2\beta}{2+\beta}}(t)$ itself near infinity. Hence,
the Orlicz space associated with any Young function of this kind agrees with
$\exp L^{\frac{2\beta}{2+\beta}}\RG$, up to equivalent norms.
\end{remark}

\begin{remark}
Inequality \eqref{E:sup-integral} still holds if the integral condition
\eqref{E:nabla-integral} is replaced by the  Luxemburg norm constraint
\begin{equation} \label{E:nabla-Orlicz}
	\|\nabla u\|_{L^B\RG}\le 1,
\end{equation}
where $B$ is any Young function such that
\begin{equation*}
	N_1e^{t^\beta} \le B(t) \le N_2e^{t^\beta}
		\quad\text{for $t > t_0$,}
\end{equation*}
for some $N_2>N_1>0$ and $t_0>0$ \cite[Theorem~3.1 and Remark~3.5]{Cia:19}.
The existence of maximizers in  \eqref{E:sup-integral} under conditions
\eqref{E:nabla-Orlicz} and \eqref{E:mu} can be established  via an argument
completely analogous to that of the proof of Theorem~\ref{T:maximizers-integral}.
In particular, such an argument relies upon an
improved  version of the result of  \cite[Theorem 3.1]{Cia:19} in the spirit of
Theorem~\ref{T:integral-form-improved}, which tells us that inequality
\eqref{E:sup-improved} holds, when the supremum is extended over all functions
$u$ fulfilling conditions  \eqref{E:nabla-Orlicz} and \eqref{E:mu}, provided
that $\varphi$ is any function as in Theorem~\ref{T:integral-form-improved}.
\end{remark}

\section{Ehrhard symmetrization and ensuing inequalities}\label{sec:symm}

The point of departure of our approach are some rearrangement inequalities for
the gradient of Sobolev functions on Gauss space. These inequalities in their
turn rely upon the isoperimetric inequality that links the Gauss measure of a
set $E\subset \rn$ to its Gauss perimeter. Recall that the Gauss perimeter
$P_{\gamma_n}(E)$ of $E$ can be defined as
\begin{equation*}
	P_{\gamma_n}(E) =
		\Hg(\partial^M E),
\end{equation*}
where we have set
\begin{equation*}
	\dHg(x) = \frac 1{(2\pi)^{\frac n2}}
		 e^{-\frac{|x|^2}2}\,\d\HH^{n-1}(x),
\end{equation*}
$\partial^M E$ denotes the essential boundary of $E$ and $\HH^{n-1}$ is the
$(n-1)$-dimensional Hausdorff measure. The Gaussian isoperimetric inequality
asserts that half-spaces minimize Gauss perimeter among all measurable subsets
of $\rn$ with prescribed Gauss measure \citep{Bor:75, Sud:74}.  Also, as shown
in \citep{Car:01}, half-spaces are the only minimizers.  Note that
\begin{equation}\label{dec10}
	 \gamma_n(\{x\in\rn: x_1\ge t\})= \Phi(t)
		\quad\text{for $t\in\R$,}
\end{equation}
where $x= (x_1, \dots, x_n)$ and $\Phi\colon\R\to(0,1)$ is the function defined as
\begin{equation} \label{E:Phi-def}
	\Phi(t) = \frac{1}{\sqrt{2\pi}} \int_{t}^{\infty} e^{-\frac{\tau^2}{2}}\,\d\tau
		\quad\text{for $t\in\R$.}
\end{equation}
Moreover,
\begin{equation*}
	P_{\gamma_n} (\{x\in\rn: x_1\ge t\})
		=  \frac{1}{\sqrt{2\pi}} e^{-\frac{t^2}{2}}
		\quad\text{for $t\in\R$.}
\end{equation*}
Therefore, on defining the function $I\colon[0,1]\to [0,\infty)$ as
\begin{equation}\label{I}
	I(s) = \frac{1}{\sqrt{2\pi}} e^{-\frac{\Phi^{-1}(s)^2}{2}}
		\quad\text{ for $s\in(0,1)$,}
\end{equation}
and $I(0)=I(1)=0$,
the Gaussian isoperimetric inequality takes the analytic form
\begin{equation} \label{E:isoperimetric-ineq}
	I(\gamma _n(E)) \le P_{\gamma_n}(E)
\end{equation}
for every measurable set $E\subset\rn$, with equality if and only if $E$
agrees with
a half-space (up to a set of measure zero). The function $I$ is accordingly called the
isoperimetric function (or isoperimetric profile) of Gauss space. Note that $I$
is symmetric about $\tfrac 12$, namely
\begin{equation} \label{E:symmetry}
	I(s) = I(1-s)
		\quad\text{for $s\in[0,1]$.}
\end{equation}
Also,
\begin{equation} \label{june30}
	- \Phi'(t) = I(\Phi(t))
		\quad\text{for $t\in\R$},
\end{equation}
where $``\,'\,"$ denotes differentiation.
An Ehrhard symmetral of a function $u\in\MM\RG$ is a function which is
equimeasurable with $u$ and whose level sets are  half-spaces.
Here, $\MM\RG$ denotes the space of measurable functions on $\rn$
with respect to $\gamma_n$. A parallel notation will be employed for the
space of measurable functions on more general measure spaces.
Recall that two
functions, possibly defined on different measure spaces, are called
equimeasurable if all their level sets have like measures.

The signed
decreasing rearrangement $u^\circ\colon(0,1)\to\R$ of a function $u\in\MM\RG$
is defined as
\begin{equation*}
	u^\circ (s) = \inf \{t\in\R: \gamma_n(\{u>t\})\le s\}
		\quad\text{for $s\in(0,1)$.}
\end{equation*}
The functions $u$ and $u^\circ$ are equimeasurable. Thus, the mean value of $u$ satisfies
\begin{equation*}
	\mv(u)
		= \int_{\rn} u \,\dgn 
		= \int_0^1 u^\circ\,\d s.
\end{equation*}
Moreover, the median of $u$ can be defined as
\begin{equation*}
	\med (u) = u^\circ\left(\tfrac 12\right).
\end{equation*}
Thanks to equation \eqref{dec10}, the function $u^\bullet\colon\rn\to\R$,
defined as
\begin{equation*}
	u^\bullet (x) = u^\circ (\Phi (x_1))
		\quad\text{for $x\in\rn$,}
\end{equation*}
is an Ehrhard symmetral of $u$.
Owing to the equimeasurability of $u$, $u^\circ$, $u^\bullet$, one has that
\begin{equation}
	\int_0^1 A(|u^\circ|)\,\d s
		= \int_{\rn} A(|u^\bullet|)\,\dgn
		= \int_{\rn} A(|u|)\,\dgn
\end{equation}
for every increasing function $A\colon[0,\infty)\to[0,\infty]$.

A P\'olya--Szeg\H{o} principle on the decrease of gradient integrals under
Ehrhard symmetrization holds if the integrand $A\colon[0,\infty)\to [0,\infty)$
is a Young function, namely a convex function vanishing at the origin. This
result is established in \cite{Ehr:84}, and is recalled in the next
proposition, where, in addition, a characterization of the cases of equality is
offered. Such a characterization is needed in view of the piece of information
on extremals given in Theorem~\ref{T:maximizers-integral}. A proof seems not to
be available in the literature, and is provided below.

\begin{proposition} \label{P:PSintegral}
Let $A$ be a  Young function.  Assume that $u$ is a weakly differentiable
function in $\rn$ such that
\begin{equation}\label{sep199}
	\int_{\rn} A(|\nabla u|)\,\dgn < \infty.
\end{equation}
Then the function $u^\circ$ is locally absolutely continuous in $(0,1)$, the
function $u^\bullet$ is weakly differentiable in $\rn$, and
\begin{equation}\label{sep200}
	\int_0^1 A\bigl(-u^\circ{}'(s)I(s)\bigr)\,\d s
		= \int_{\rn} A(|\nabla u^\bullet|)\,\dgn
		\le \int_{\rn} A(|\nabla u|)\,\dgn.
\end{equation}
Moreover, if $A$ is strictly positive in $(0, \infty)$ and finite-valued, and
equality holds in the inequality in \eqref{sep200}, then all level sets of $u$
are half-spaces, namely  there exists $\xi\in\rn$, with $|\xi|=1$, such that
\begin{equation}\label{sep201}
	u(x) = u^\circ (\Phi (x \cdot \xi))
		\quad\text{for \ae $x\in\rn$}.
\end{equation}
\end{proposition}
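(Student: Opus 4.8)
The plan is to reduce everything to a one-dimensional problem via the coarea formula, then invoke the Gaussian isoperimetric inequality \eqref{E:isoperimetric-ineq} together with Jensen's inequality, and finally track the equality cases back through each of these steps. First I would record the standard facts: by the coarea formula, for a.e.\ level $t$ the distribution function $\mu(t)=\gamma_n(\{u>t\})$ is differentiable with $-\mu'(t)=\int_{\{u=t\}} |\nabla u|^{-1}\,\dHg$ on the set where $|\nabla u|\neq 0$, while the set of critical values has measure zero (Sard-type argument / the structure of $\{\nabla u =0\}$). Since $u^\circ$ is the generalized inverse of $\mu$, local absolute continuity of $u^\circ$ in $(0,1)$ follows once we know $\mu$ is strictly decreasing and absolutely continuous on the relevant range; this in turn comes from \eqref{sep199}, which forces $|\nabla u|$ to be finite a.e.\ and prevents $u$ from having flat parts of positive measure on noncritical levels. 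The weak differentiability of $u^\bullet$ then follows because $u^\bullet(x)=u^\circ(\Phi(x_1))$ with $u^\circ$ locally absolutely continuous and $\Phi$ smooth; one computes $|\nabla u^\bullet(x)| = -u^\circ{}'(\Phi(x_1))\,(-\Phi'(x_1)) = -u^\circ{}'(\Phi(x_1))\,I(\Phi(x_1))$ using \eqref{june30}, and a change of variables $s=\Phi(x_1)$ yields the equality $\int_{\rn} A(|\nabla u^\bullet|)\,\dgn = \int_0^1 A\bigl(-u^\circ{}'(s)I(s)\bigr)\,\d s$.

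The heart of the inequality in \eqref{sep200} is then the estimate $\int_0^1 A\bigl(-u^\circ{}'(s)I(s)\bigr)\,\d s \le \int_{\rn} A(|\nabla u|)\,\dgn$. I would argue at the level of the distribution function. Fix a noncritical value $t$ and write $P_t = P_{\gamma_n}(\{u>t\})$. By the coarea formula and Jensen's inequality applied to the probability measure $|\nabla u|^{-1}\,\dHg / (-\mu'(t))$ restricted to $\{u=t\}$,
\begin{equation*}
	A\!\left(\frac{P_t}{-\mu'(t)}\right) = A\!\left(\frac{1}{-\mu'(t)}\int_{\{u=t\}}\!\frac{\d\Hg}{|\nabla u|}\cdot \frac{\int_{\{u=t\}}|\nabla u|\,\dHg}{\int_{\{u=t\}}\frac{\dHg}{|\nabla u|}}\cdot\frac{\int_{\{u=t\}}\frac{\dHg}{|\nabla u|}}{P_t}\right)^{-1}\!,
\end{equation*}
which, handled more carefully, gives $A\bigl(P_t/(-\mu'(t))\bigr)\,(-\mu'(t)) \le \int_{\{u=t\}} A(|\nabla u|)\,|\nabla u|^{-1}\,\dHg$; integrating in $t$ and using coarea on the right recovers $\int_{\rn}A(|\nabla u|)\,\dgn$ over the noncritical part. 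On the other hand the isoperimetric inequality \eqref{E:isoperimetric-ineq} gives $P_t \ge I(\mu(t))$, and since $A$ is nondecreasing, $A\bigl(I(\mu(t))/(-\mu'(t))\bigr) \le A\bigl(P_t/(-\mu'(t))\bigr)$. Finally the change of variables $s=\mu(t)$, together with $-u^\circ{}'(s) = 1/(-\mu'(t))$ and $A(0)=0$ to dispose of the critical part, turns $\int_\R A\bigl(I(\mu(t))/(-\mu'(t))\bigr)(-\mu'(t))\,\d t$ into $\int_0^1 A\bigl(-u^\circ{}'(s)I(s)\bigr)\,\d s$. Chaining the inequalities yields \eqref{sep200}.

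For the equality case, assume $A$ is finite-valued and strictly positive on $(0,\infty)$, and that equality holds. Then equality must hold at a.e.\ noncritical level $t$ in both the Jensen step and the isoperimetric step. Equality in Jensen's inequality, given that $A$ is strictly positive (hence, being convex and finite, strictly convex on no subinterval is \emph{not} what we need — rather we use that $A$ is not affine on any interval containing the relevant values, or more robustly that the chord condition in Jensen is saturated), forces $|\nabla u|$ to be constant $\Hg$-a.e.\ on $\{u=t\}$. Equality in \eqref{E:isoperimetric-ineq}, by the rigidity result of \citep{Car:01} quoted in the text, forces $\{u>t\}$ to be a half-space (up to $\gamma_n$-null sets) for a.e.\ $t$. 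It remains to upgrade "each superlevel set is a half-space" to "$u$ is a function of a single linear coordinate". This is the step I expect to be the main obstacle: one must show the half-spaces $\{u>t\}$ share a common direction $\xi$. The standard route is to note that the family $\{u>t\}$ is nested and, being a monotone family of half-spaces of varying Gauss measure, the defining hyperplanes are parallel — otherwise two non-parallel half-spaces in the family would fail to be nested. Hence there is a fixed unit vector $\xi$ and a nonincreasing function $t\mapsto a(t)$ with $\{u>t\} = \{x\cdot\xi > a(t)\}$ up to null sets; comparing Gauss measures via \eqref{dec10} gives $\Phi(a(t)) = \mu(t)$, i.e.\ $a(t) = \Phi^{-1}(u^\circ{}^{-1}(t))$, and unraveling the definitions yields $u(x) = u^\circ(\Phi(x\cdot\xi))$ for a.e.\ $x$, which is \eqref{sep201}. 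A mild technical point to be dispatched here is that the exceptional null set of levels, and the null sets by which the superlevel sets differ from half-spaces, must be assembled coherently so as not to destroy the a.e.\ identity in $x$; this is handled by working with a countable dense set of levels $t$ and using monotonicity of $u$ along $\xi$ to fill in the rest.
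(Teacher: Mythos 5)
Your route coincides with the paper's: coarea formula, Jensen's inequality on the level sets, the Gaussian isoperimetric inequality \eqref{E:isoperimetric-ineq}, and, for the equality case, the rigidity of half-spaces plus nestedness of superlevel sets. Two places, however, are genuine gaps as written. First, you treat $|\nabla u|^{-1}\dHg/(-\mu'(t))$ as a probability measure on $\{u=t\}$, i.e.\ you assume $-\mu'(t)=\int_{\{u=t\}}|\nabla u|^{-1}\,\dHg$ for a.e.\ $t$, and justify this by a ``Sard-type argument''. There is no Sard theorem for Sobolev functions: $-\mu'$ may a priori pick up a contribution from $\{\nabla u=0\}$, and the coarea formula only gives the cheap inequality $-\mu'(t)\ge\int_{\{u=t\}}|\nabla u|^{-1}\,\dHg$; the equality is proved in the paper through an approximate-differentiability analysis (a Gaussian version of Lemmas 2.4 and 3.2 of \citep{Cia:02}, plus the fact that absolutely continuous functions map null sets to null sets). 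For the \emph{inequality} in \eqref{sep200} you can sidestep this (Jensen for sub-probability measures using $A(0)=0$, or, as in the paper, monotonicity of $A(t)/t$), but your final change of variables $s=\mu(t)$ with $-u^\circ{}'(s)=1/(-\mu'(t))$ --- the step that identifies the resulting lower bound with $\int_0^1 A\bigl(-u^\circ{}'(s)I(s)\bigr)\,\d s=\int_{\rn}A(|\nabla u^\bullet|)\,\dgn$ --- needs exactly this bookkeeping; ``$A(0)=0$ to dispose of the critical part'' handles jumps of $\mu$ but not a possible singular part of $\mu$ nor the mismatch between the $t$- and $s$-parametrizations. Relatedly, local absolute continuity of $u^\circ$ is not about excluding flat parts of $u$ (flat parts are harmless, they just produce flat parts of $u^\circ$); what must be excluded are jumps of $u^\circ$, and that is Ehrhard's nontrivial lemma \citep{Ehr:84}, which the paper cites rather than reproves.

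Second, in the equality case you pass from equality in the integrated chain to $P_{\gamma_n}(\{u>t\})=I(\mu(t))$ for a.e.\ $t$ without comment. Since the isoperimetric inequality enters only inside $A$, you need $A$ to be strictly increasing to deduce equality of the arguments; this is precisely what the hypotheses ``strictly positive on $(0,\infty)$ and finite-valued'' are for, and it is there --- not in the Jensen equality discussion, which the paper does not use at all and which you rightly sense is murky --- that they must be invoked. Once that is said, your rigidity-plus-nestedness argument (non-parallel half-spaces cannot be nested, hence a common direction $\xi$, hence \eqref{sep201}) is the same as the paper's and is fine.
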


\begin{proof}
The fact that $u^\circ$ is locally absolutely continuous in $(0,1)$,
and hence the function $u^\bullet$ is weakly differentiable in $\rn$,   goes
back to \citep{Ehr:84}  -- see also \citep[Lemma~3.3]{Cia:09}. Assume now that
$u$ fulfills \eqref{sep199}. The coarea formula tells us that
\begin{equation} \label{sep204}
	\int_{\rn} f\, |\nabla u|\, \dgn
		= \int_{-\infty}^\infty \int_{\{u=t\}} f\,\dHg\,\d t
\end{equation}
for every Borel function $f\colon\rn\to\R$, provided that a representative of
$u$ is suitably chosen. In particular, the choice of $f=\chi_{\{\nabla u=0\}}$
in equation \eqref{sep204} tells us that
\begin{equation} \label{sep205}
	\Hg (\{\nabla u= 0\} \cap \{u=t\}) = 0
		\quad\text{for \ae $t\in\R$}.
\end{equation}
Next, set
\begin{equation*}
	\mu(t) = \gamma_n (\{u>t\})
		\quad\text{for $t\in\R$}.
\end{equation*}
We claim that
\begin{equation} \label{sep206}
	\frac{\Hg(\{u^\bullet=t\})}{|\nabla u^\bullet |_{\upharpoonright\{u^\bullet =t\}}}
		= - \mu'(t)
		\geq \int_{\{u=t\}} \frac{1}{|\nabla u|}\,\dHg
		\quad \text{for \ae $t\in\R$},
\end{equation}
where $|\nabla u^\bullet |_{\upharpoonright\{u^\bullet =t\}}$ denotes the
(constant) value of $|\nabla u^\bullet|$ restricted to the set $\{u^\bullet
=t\}$.  Indeed, the inequality in \eqref{sep206} follows from the fact that
\begin{align*}
	\mu (t)
		& = \int_{\{u>t\}} \left( \chi_{\{\nabla u \neq 0\}}
				+ \chi_{\{\nabla u = 0\}}\right) \dgn
			\\
		&	=  \int_t^\infty \int_{\{u=\tau\}} \frac{\chi_{\{\nabla u \neq 0\}}}{|\nabla u|} \,\dHg\,\d \tau
			+ \gamma_n \bigl(\{u>t\} \cap \{\nabla u = 0\}\bigr)
			\\
		& = \int_t^\infty \int_{\{u=\tau\}} \frac{1}{|\nabla u|} \,\dHg\,\d\tau
			+ \gamma_n \bigl(\{u>t\} \cap \{\nabla u = 0\}\bigr)
		\quad\text{for $t\in\R$},
\end{align*}
where the second equality holds by formula \eqref{sep204} and the third one
owing to equation \eqref{sep205}. In order to verify  the equality in  equation
\eqref{sep206}, let us set
\begin{equation*}
	\DD_{u^\bullet}
		= \{x\in\rn:  \,\text{$u^\bullet$ is approximately differentiable at $x$}\}
\end{equation*}
and
\begin{equation*}
	\DD_{u^\circ}
		= \{s\in (0,1): \,\text{$u^\circ$ is approximately differentiable at $s$}\}.
\end{equation*}
Recall that
\begin{equation}\label{nov1}
	\gamma_n(\rn\setminus \DD_{u^\bullet})=0
		\quad\text{and}\quad
	|(0,1) \setminus \DD_{u^\circ}|=0,
\end{equation}
since $u^\bullet$ and $u^\circ$ are weakly differentiable functions. Here,
vertical bars $|\, \cdot \,|$ stand for the Lebesgue measure. In the remaining part
of this proof, let $ \nabla u^\bullet $ and $ {u^\circ}' $  denote the
approximate differentials of $u^\bullet$ and $u^\circ$,   respectively.  Set
\begin{equation*}
	\DD^+_{u^\bullet}
		= \{x\in\rn: \nabla u^\bullet(x)\,\text{exists and}\,\,\nabla u^\bullet(x)\ne 0\},
		\quad
	\DD^0_{u^\bullet}
		=\{x\in\rn: \nabla u^\bullet(x)\,\text{exists and}\,\,\nabla u^\bullet (x) = 0\},
\end{equation*}
and
\begin{equation*}
	\DD^+_{u^\circ}
		=\{s \in (0,1): {u^\circ}' (s) \, \text{exists and}\,\,{u^\circ}'(s)\ne 0\},
	\quad
	\DD^0_{u^\circ}
		=\{s \in (0,1): {u^\circ}' (s) \, \text{exists and}\,\,{u^\circ}'(s) = 0\}.
\end{equation*}
A Gaussian version of \citep[Lemma 3.2]{Cia:02}, with analogous proof, tells
us that the equality in equation \eqref{sep205} holds for \ae $t \in
u^\bullet(D^+_{u^\bullet})$, and that $\mu'(t) = 0$ for \ae $t \notin
u^\bullet(D^+_{u^\bullet})$. Also, one can verify that $u^\bullet$ is
approximately differentiable at $x$ if and only if $u^\circ$ is approximately
differentiable at $x_1$, and that
\begin{equation*}
	\nabla u^\bullet (x)
		= - {u^\circ}' (\Phi(x_1))I(\Phi(x_1))(1, 0, \dots , 0).
\end{equation*}
Hence, one infers that $u^\bullet(D^+_{u^\bullet})=  u^\circ(D^+_{u^\circ})$,
$u^\bullet(D^0_{u^\bullet})=  u^\circ(D^0_{u^\circ})$ and $u^\bullet(\rn
\setminus \DD_{u^\bullet})= u^\circ((0,1) \setminus \mathcal
D_{u^\circ}) $. By \citep[Lemma 2.4]{Cia:02}, $u^\circ(D^0_{u^\circ})=0$.
Also, $u^\circ((0,1) \setminus \DD_{u^\circ}) =0$,
owing to the second equation in \eqref{nov1} and to the fact that absolutely continuous functions map sets of measure zero into sets of measure zero. Altogether, we infer that the equality holds in \eqref{sep206} for \ae $t\in\R$.

Next, the following chain holds:
\begin{align} \label{sep209}
	\begin{split}
	\int_{\{u=t\}}\frac{A(|\nabla u|)}{|\nabla u|}\,\dHg
		& \geq A\left(\frac
				{\int_{\{u=t\}} \frac{|\nabla u|}{|\nabla u|}\, \dHg}
				{\int_{\{u=t\}} \frac{1}{|\nabla u|}\, \dHg}\right)
			\int_{\{u=t\}} \frac{1}{|\nabla u|}\,\dHg
			\\
		& = A\left(\frac
				{\Hg(\{u =t\})  }
				{\int_{\{u=t\}} \frac{1}{|\nabla u|}\, \dHg}\right)
			\int_{\{u=t\}} \frac{1}{|\nabla u|}\,\dHg
			\\
		& \geq A\left(\frac
				{\Hg(\{u =t\})  }
				{\Hg(\{u^\bullet =t\}) }
			|\nabla u^\bullet |_{\upharpoonright\{u^\bullet =t\}}  \right)
			\frac{ \Hg(\{u^\bullet =t\})}
				{|\nabla u^\bullet |_{\upharpoonright\{u^\bullet =t\}}}
			\\
		& \geq A\left( |\nabla u^\bullet |_{\upharpoonright\{u^\bullet =t\}}  \right)
			\frac{\Hg(\{u^\bullet =t\})}
				{|\nabla u^\bullet |_{\upharpoonright\{u^\bullet =t\}}}
			\\
		& = \int_{\{u^\bullet =t\}}
			\frac{A(|\nabla u^\bullet |)}
			{|\nabla u^\bullet |}\,\dHg
		\quad\text{for \ae $t\in\R$.}
		\end{split}
\end{align}
Observe that the first inequality in \eqref{sep209} holds by Jensen's
inequality, the second one by inequality  \eqref{sep206} and the fact that the
function $A(t)/t$ is non-decreasing, and the third one since, owing to the
isoperimetric inequality in Gauss space \eqref{E:isoperimetric-ineq},
\begin{equation}\label{sep210}
	\Hg(\{u =t\})
		=  P_{\gamma _n}(\{u >t\})
		\geq  P_{\gamma _n}(\{u^\bullet >t\})
		=  \Hg(\{u^\bullet =t\})
	\quad\text{for \ae $t\in\R$}.
\end{equation}
An integration of inequality \eqref{sep209} in $t$ over $\R$ and the use of 
coarea formula  \eqref{sep204} yield
\begin{align}\label{sep211}
	\begin{split}
	\int_{\rn} A(|\nabla u|)\, \dgn
		& = \int_{-\infty}^\infty
				\int_{\{u=t\}}\frac{A(|\nabla u|)}{|\nabla u|}\, \dHg \, \d t
		\\
		& \geq \int _{-\infty}^\infty \int_{\{u^\bullet =t\}}
				\frac{A(|\nabla u^\bullet |)}{|\nabla u^\bullet |}\, \dHg \, \d t
		= \int_{\rn} A(|\nabla u^\bullet|)\,\dgn,
	\end{split}
\end{align}
whence equation \eqref{sep200} follows.

As far  as the equality case is concerned, if the inequality in \eqref{sep200}
holds as equality, then the same is true in \eqref{sep211}. If $A$ is
strictly positive in $(0,\infty)$ and finite-valued, then it is strictly
increasing. An inspection of the proof of    inequality   \eqref{sep211}
then reveals that, in particular, equality must hold in inequality
\eqref{sep210}  for \ae $t\in\R$. The characterization of extremal sets in
the  Gaussian isoperimetric inequality implies that  $\{u>t\}$ is a
half-space for \ae $t\in\R$. Since the level sets of any function are
nested, this implies that the level sets of $u$ are half-spaces with parallel
boundaries. Hence, equation \eqref{sep201} follows.
\end{proof}

\section{Proof of Theorem~\ref{T:integral-form-improved}}

Our proof of the enhanced version of inequality \eqref{E:sup-integral},
contained in Theorem~\ref{T:integral-form-improved}, rests upon a precise
estimate for the asymptotic behaviour of a norm in an Orlicz space depending
on the functions $\Phi$ and $I$ introduced in \eqref{E:Phi-def} and \eqref{I}.
We begin by recalling a few facts from the theory of Young functions and Orlicz
spaces that are needed in deriving this estimate.

Let $(\RR, \nu)$ be a non-atomic probability space that, in what follows, will
be either $\rn$ endowed with the Gauss measure $\gamma_n$, or $(0,1)$ endowed
with the Lebesgue measure (in which case  the measure will  be omitted in the
notation).  The Orlicz space $L^A(\RR, \nu)$ built upon a Young function $A$ is
defined as
\begin{equation*} %\label{orliczspace}
	L^A(\RR, \nu)
		= \bigg\{\phi\in \MM(\RR, \nu):
			\int_{\RR} A\biggl(\frac{|\phi |}{\lambda}\biggr)\, \d\nu  < \infty
			\,\,\text{for some $\lambda>0$}\bigg\}.
\end{equation*}
The space $L^A(\RR, \nu)$ is a Banach space equipped with the Luxemburg norm
given by
\begin{equation*}
	\|\phi\|_{L^A(\RR, \nu)}
		= \inf \bigg\{\lambda>0:
			\int_{\RR} A\bigg(\frac{|\phi |}{\lambda}\bigg)\,\d\nu\le 1
		\bigg\}
\end{equation*}
for $\phi \in L^A(\RR, \nu)$.
One has  that $L^A(\RR, \nu)=L^B(\RR, \nu)$ (up to equivalent
norms) if and only if $A$ and $B$ are Young functions equivalent near infinity,
in the sense that $A(c_1t) \le B(t) \le A(c_2t)$  for some positive constants $c_1$
and $c_2$, and for sufficiently large $t$.
Recall that
\begin{equation*} %\label{embeddings}
	L^\infty (\RR, \nu) \to L^A(\RR, \nu) \to L^1(\RR, \nu)
\end{equation*}
for every Young function $A$.
The Orlicz norm
$\opnorm{\,\cdot \,}_{L^{A}(\RR, \nu)}$, given by
\begin{equation*}
	\opnorm{\phi}_{L^A(\RR, \nu)}
		= \sup\biggl\{ \int_{\RR} \phi\psi\,\d\nu:
				\int_{\RR} \tilde{A}(|\psi|)\,\d\nu\le 1
			\biggr\}
\end{equation*}
for $\phi \in L^A(\RR, \nu)$, is equivalent to the Luxemburg norm. Here,
$\widetilde A\colon[0,\infty)\to[0,\infty]$ denotes the Young conjugate of
$A$, defined as
\begin{equation*} %\label{E:Young-conjugate}
	\widetilde A(t)
		= \sup\{\tau t-A(\tau): \tau\ge 0\}
	\quad\text{for $t\ge 0$,}
\end{equation*}
which is also a Young function.  Notice that, if
$a\colon[0,\infty)\to[0,\infty]$ is the  non-decreasing left-continuous function such that
\begin{equation*}
	A(t) = \int_0^t a(\tau)\, \d \tau
	\quad\text{for $t\ge 0$,}
\end{equation*}
then $\widetilde A$ admits the representation formula
\begin{equation*}
	\widetilde A (t) = \int_0^ta^{-1}(\tau)\, \d\tau
		\quad\text{for $t\ge 0$,}
\end{equation*}
where $a^{-1}$ denotes the (generalized) left-continuous inverse of $a$.
By the very definition of Young conjugate, one has that
\begin{equation}\label{dec12}
	t \tau \leq A(t) + \widetilde A(\tau)
		\quad \text{for $t, \tau \geq 0$.}
\end{equation}
Moreover, equality holds in \eqref{dec12} if and only if either $t=
a^{-1}(\tau)$ or $\tau = a(t)$.
If $\phi\in L^{A}(\RR,\nu)$ and  $E$ is a $\nu$-measurable subset of $\RR$, we use the abridged
notation
\begin{equation*}
	\|\phi\|_{L^A(E,\nu)}
		= \|\phi\chi_E\|_{L^A(\RR, \nu)}
	 \quad\text{and}\quad
	 \opnorm{\phi}_{L^A(E, \nu)}
	 	= \opnorm{\phi\chi_E}_{L^A(\RR, \nu)}.
\end{equation*}
In particular, one has that
\begin{equation} %\label{E:Orl_char}
	\opnorm{1}_{L^A(E, \nu)}
		= \nu(E)\tilde{A}^{-1}\bigl(1/\nu(E)\bigr).
\end{equation}
Here, $\tilde{A}^{-1}$ denotes the (generalized) right-continuous inverse of
$\tilde A$.  A sharp form of the H\"older inequality in Orlicz spaces tells us
that
\begin{equation} %\label{E:Holder}
	\int_{\RR} \phi\psi\,\d\nu
		\le \|\phi\|_{L^A(\RR, \nu)} \opnorm{\psi}_{L^{\tilde A}(\RR, \nu)}
\end{equation}
for every $\phi \in L^A(\RR, \nu)$ and $\psi \in L^{\tilde A}(\RR, \nu)$.

The next lemma provides us with a uniform bound for the integral in
\eqref{E:sup-improved} for functions obeying \eqref{E:nabla-integral}.  Such a
bound involves a function $\FF_B$, which is in its turn associated with a Young
function $B$ obeying
\begin{equation} \label{BN}
	B(t)=Ne^{t^\beta}
		\quad\text{for $t>t_0$}
\end{equation}
for some $N>0$ and $t_0>0$.  The function $\FF_B\colon(0,\infty)\to(0,\infty)$
is defined as
\begin{equation} \label{E:F-LB}
	\FF_B(t)
		= \normIB
		+ \frac{\sqrt{2\pi}}{2}B^{-1}(1)
			\quad\text{for $t>0$}.
\end{equation}

\begin{lemma} \label{L:Holder-estimates}
Let $\beta >0$, $\kappa>0$ and $M>1$. Assume that the function
$\EF\colon[0,\infty)\to[0,\infty)$ is  continuous and such that the function
$\exp^\frac{2\beta}{2+\beta}(\kappa t)\EF(t)$ is non-decreasing. Then
there exists a Young function $B$ of the form \eqref{BN} such that
\begin{equation} \label{E:exponential-estimate}
	\int_{\rn}
		\exp^{\frac{2\beta}{2+\beta}}\left(\kappa |u| \right)
		\EF\left(|u|\right)\,
		\dgn
		\le \sqrt{\frac{2}{\pi}}
			\int_{0}^{\infty} e^{\left[ \kappa\FF_B(t) \right]^{\frac{2\beta}{2+\beta}}
				-\frac{t^2}{2}}
			\EF\left( \FF_B(t) \right)
				\,\d t
\end{equation}
for every weakly differentiable function $u$ in $\rn$ satisfying conditions
\eqref{E:nabla-integral} and \eqref{E:mu}.
\end{lemma}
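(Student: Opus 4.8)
The plan is to combine the Pólya–Szegő principle of Proposition~\ref{P:PSintegral} with a pointwise bound for $u^\bullet$ in terms of the isoperimetric function $I$, and then a duality (Hölder) argument in Orlicz spaces. First I would reduce to Ehrhard symmetrals: since $\exp^{2\beta/(2+\beta)}(\kappa t)\EF(t)$ is non-decreasing and $u$, $u^\bullet$ are equimeasurable, the left-hand side of \eqref{E:exponential-estimate} is unchanged on replacing $u$ by $u^\bullet$; and by Proposition~\ref{P:PSintegral}, applied with $A=\Exp^\beta$ (or any Young function $B$ of the form \eqref{BN} dominating it near infinity), $u^\bullet$ still satisfies the gradient constraint $\int_\rn \Exp^\beta(|\nabla u^\bullet|)\,\dgn\le M$, equivalently $\int_0^1 \Exp^\beta(-u^{\circ\prime}(s)I(s))\,\d s\le M$. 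So it suffices to prove the bound for a one-dimensional, monotone profile $u^\circ$ with $u^\circ(1/2)$ (or $\int_0^1u^\circ$) equal to zero.

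Next I would establish the key pointwise estimate. Writing $f(s)=-u^{\circ\prime}(s)I(s)$, for $t<1/2$ one has $u^\circ(t)=u^\circ(1/2)+\int_t^{1/2}(-u^{\circ\prime}(s))\,\d s = \int_t^{1/2} f(s)/I(s)\,\d s$ (using $\m(u)=0$ to kill the constant, or to control it), and similarly on $(1/2,1)$. Estimating this integral by the sharp Hölder inequality in the Orlicz pair $(L^{\tilde B},L^{\tilde{\tilde B}})=(L^{\tilde B},L^{B})$ on the interval $(\Phi(t),1/2)$ gives
\begin{equation*}
	|u^\circ(t)| \le \opnorm*{\frac1I}_{L^{\tilde B}(\Phi(t),\frac12)}\,\|f\|_{L^{B}(\Phi(t),\frac12)} + (\text{boundary term}),
\end{equation*}
where $\|f\|_{L^B(\cdot)}\le 1$ after normalizing $B$ so that $\int_0^1 B(f)\,\d s\le1$ is implied by $\int_0^1\Exp^\beta(f)\le M$ — this is where the constant $N$ in \eqref{BN} is chosen (depending on $M$). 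The boundary/constant term is exactly the $\tfrac{\sqrt{2\pi}}{2}B^{-1}(1)$ summand, coming from estimating $u^\circ(1/2)$ (or the mean) via $\opnorm{1}_{L^{\tilde B}(0,1/2)}=\tfrac12\tilde B^{-1}(2)$ and the symmetry \eqref{E:symmetry} of $I$. Altogether $|u^\circ(\Phi(t))|\le \FF_B(t)$ for $t>0$, with $\FF_B$ as in \eqref{E:F-LB}.

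Finally I would change variables to finish. Since $\exp^{2\beta/(2+\beta)}(\kappa\cdot)\EF(\cdot)$ is non-decreasing and $|u^\circ(\Phi(t))|\le\FF_B(t)$,
\begin{equation*}
	\int_\rn \exp^{\frac{2\beta}{2+\beta}}(\kappa|u|)\EF(|u|)\,\dgn
	= \int_0^1 \exp^{\frac{2\beta}{2+\beta}}(\kappa|u^\circ(s)|)\EF(|u^\circ(s)|)\,\d s,
\end{equation*}
and substituting $s=\Phi(t)$, $\d s=-\Phi'(t)\,\d t=\tfrac1{\sqrt{2\pi}}e^{-t^2/2}\,\d t$ by \eqref{june30}, splitting the range $s\in(0,1)$ into $(0,1/2)$ and $(1/2,1)$ and using \eqref{E:symmetry} to handle the second half (it contributes an equal bound), I obtain exactly the right-hand side of \eqref{E:exponential-estimate}, the factor $\sqrt{2/\pi}=2\cdot\tfrac1{\sqrt{2\pi}}$ accounting for the two halves. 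The main obstacle is the careful bookkeeping in the Hölder/duality step: one must verify that the sharp Hölder inequality in Orlicz spaces applies on the variable interval $(\Phi(t),1/2)$, that the normalization of $B$ (the choice of $N$, and checking $B$ of the form \eqref{BN} is genuinely Young and that $\Exp^\beta\le B$ near infinity with $\int B(f)\le1$ following from $\int\Exp^\beta(f)\le M$) is consistent, and that the constant term is handled uniformly in $t$; the geometric symmetrization inputs are then routine given Proposition~\ref{P:PSintegral}.
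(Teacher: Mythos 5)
Your proposal is correct in substance, but it takes a more self-contained route than the paper. The paper's own proof is deliberately short: it linearizes $\Exp^\beta$ below a threshold $t_0$, sets $B=NA$ with $N=1/(M+\Exp^\beta(t_0))$, observes that constraint \eqref{E:nabla-integral} then forces $\int_{\rn}B(|\nabla u|)\,\dgn\le 1$, hence $\|\nabla u\|_{L^B\RG}\le 1$, and concludes by quoting the norm-form estimate of \cite[Lemma~4.5]{Cia:19}, which is precisely \eqref{E:exponential-estimate} under the Luxemburg-norm constraint. Your normalization step is the same as the paper's; what you do differently is to reconstruct the quoted lemma from scratch: Ehrhard symmetrization via Proposition~\ref{P:PSintegral}, the representation $u^\circ(s)=u^\circ(\tfrac12)+\int_s^{1/2}f(r)/I(r)\,\d r$ with $f=-u^\circ{}'I$ and $\int_0^1B(f)\,\d s\le1$, the sharp Orlicz--H\"older inequality on $(\Phi(t),\tfrac12)$ giving $|u^\circ(\Phi(\pm t))|\le\FF_B(t)$, and the change of variables $s=\Phi(t)$ together with the symmetry \eqref{E:symmetry}, which produces the factor $\sqrt{2/\pi}$. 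This is indeed the mechanism behind the cited result, so your route buys self-containedness (and makes transparent where the two summands in \eqref{E:F-LB} and the constant $\sqrt{2/\pi}$ come from), at the cost of redoing work the paper outsources to \cite{Cia:19}. Two details in your sketch should be tightened. First, $\Exp^\beta$ is not itself a Young function, since $\Exp^\beta(0)=1$; Proposition~\ref{P:PSintegral} must be applied to $\Exp^\beta(t)-\Exp^\beta(0)$ or directly to the linearized $B$, exactly as the paper does. Second, the constant term in \eqref{E:F-LB}: in the mean-value case the clean bound is $|\med(u)-\mv(u)|\le\int_0^1 f(s)\,\frac{\min(s,1-s)}{I(s)}\,\d s\le\frac{\sqrt{2\pi}}{2}B^{-1}(1)\,\|f\|_{L^B(0,1)}$, using that $s/I(s)$ is non-decreasing on $(0,\tfrac12]$ (concavity of $I$ with $I(0)=0$) and $\opnorm{1}_{L^{\tilde B}(0,1)}=B^{-1}(1)$; also note that the formula you wrote should read $\opnorm{1}_{L^{\tilde B}(0,\frac12)}=\tfrac12 B^{-1}(2)$ (the double conjugate is $B$), which is harmless here since $\tfrac12 B^{-1}(2)\le B^{-1}(1)\le\frac{\sqrt{2\pi}}{2}B^{-1}(1)$, so your estimate still fits under $\FF_B$.
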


\begin{proof}
Choose $t_0$ so large that $ \Exp^\beta(t) = \exp ^\beta (t)$ for $t \geq
t_0$, and define the Young function $A\colon[0,\infty)\to[0,\infty)$
as
\begin{equation*}
	A(t) =
	\begin{cases}
		\frac{t}{t_0} \Exp^\beta(t_0)
			& \text{for $t\in [0,t_0)$}
		\\
		\Exp^\beta(t)
			& \text{for $t\in [t_0,\infty)$}.
	\end{cases}
\end{equation*}
 Set  $N=1/(M+\Exp^\beta(t_0))$ and let $B=NA$.  Then $B$ is a Young
function. If $u$ is a function as in the statement, then
\begin{align*}
	\int_{\rn} B(|\nabla u|)\,\dgn
		 \le N \int_{\{|\nabla u|\ge t_0\}} \Exp^\beta(|\nabla u|)\,\dgn
				+ N \int_{\{|\nabla u|<t_0\}} \Exp^\beta(t_0)\,\dgn
		 \le N\bigl(M + \Exp^\beta(t_0)\bigl) = 1.
\end{align*}
Hence, by the very definition of Luxemburg norm, $\|\nabla u\|_{L^B\RG} \le 1$.
Inequality \eqref{E:exponential-estimate} is thus a consequence of
\citep[Lemma~4.5]{Cia:19}.
\end{proof}

The behaviour near infinity of the function $\FF_B$, defined by \eqref{E:F-LB},
is described in Lemma~\ref{L:I-norm-sharp} below. This is the key estimate to
which we alluded above. Its proof requires several  expansions and bounds
related to the functions entering the definition of   $\FF_B$. They are the
content of some preliminary lemmas, whose statements and proofs  need some
notation related to expansions of functions that are fixed hereafter.

Given a function $\FF$ defined in some neighbourhood of
infinity, and $k\in\N$,   we write
\begin{equation}\label{dic30}
	\FF(t) = \EE_1(t) + \cdots + \EE_k(t) + \cdots
		\quad\text{as $t\to\infty$}
\end{equation}
to denote that
\begin{equation*}
	\lim_{t\to\infty} \frac{\FF(t)}
		{\EE_1(t)} = 1
			\quad\text{if  $k=1$},
	\quad\text{and}\quad
	\lim_{t\to\infty} \frac{\FF(t) - [\EE_1(t) + \cdots + \EE_{j}(t)]}
		{\EE_{j+1}(t)} = 1
			\quad\text{for $1\le j\le k-1$ otherwise.}
\end{equation*}
Clearly, if equation \eqref{dic30} holds, then
\begin{equation} \label{E:power}
	\FF(t)^\sigma
		= \EE_1(t)^\sigma + \sigma \EE_1(t)^{\sigma-1} \EE_2(t)
			+ \cdots
		\quad\text{as $t\to\infty$}
\end{equation}
for every $\sigma>0$.
Also,
\begin{equation} \label{E:power2}
	\FF(t)^\sigma
		= \EE_1(t)^\sigma
			+ \sigma \EE_1(t)^{\sigma-1} \EE_2(t)
			+ \sigma \EE_1(t)^{\sigma-1} \EE_2(t)
				\left( \frac{\EE_3(t)}{\EE_2(t)}
					+ \frac{\sigma-1}{2}\frac{\EE_2(t)}{\EE_1(t)} \right)
			+ \cdots
		\quad\text{as $t\to\infty$}.
\end{equation}

The next three lemmas are stated without proofs. They can be obtained by simple
considerations, via  L'H\^opital's rule.

\newcommand{\lowerlimit}{d}

\begin{lemma}\label{L:Psi}
Assume that either $\sigma \in (-1, \infty)$ and $\lowerlimit \geq 1$, or
$\sigma \in (-\infty, -1]$ and $\lowerlimit>1$.  Let $\Psi_\sigma\colon
(\lowerlimit,\infty)\to[0,\infty)$ be the function defined as
\begin{equation} \label{E:Psi}
	\Psi_\sigma(t) = \int_{\lowerlimit}^{t} (\tau^2-1)^\sigma\,\d \tau
		\quad\text{for $t>\lowerlimit$.}
\end{equation}
Then
\begin{equation*}
\renewcommand{\arraystretch}{1.2}
	\Psi_\sigma(t)
	= \left\{\begin{array}{@{}l@{}l@{}ll@{}}
		\mathrlap{\displaystyle c + \cdots}
			&
			&
			& \text{if $\sigma\in(-\infty,-\tfrac12)$}
		\\
		\mathrlap{\displaystyle\log t + c + \cdots}
			&
			&
			& \text{if $\sigma=-\tfrac12$}
		\\
				\multirow{5}{*}{$\left.
				\begin{array}{@{}l@{}}
					\null \\
					\null \\
						\displaystyle
						\frac{1}{2\sigma+1} t^{2\sigma+1}
					\\ \null
					\\ \null
				\end{array}\right\{$}
			& \mathrlap{
				\displaystyle + c + \cdots
				}
			&
			& \text{if $\sigma\in(-\tfrac12,\tfrac12)$}
		\\
			& \mathrlap{
				\displaystyle
				- \frac{1}{2}\log t + c + \cdots
				}
			&
			& \text{if $\sigma=\tfrac12$}
		\\
			&
				\multirow{3}{*}{$\left.
				\begin{array}{@{}l@{}}
					\null \\
					\displaystyle
					- \frac{\sigma}{2\sigma-1} t^{2\sigma-1}
					\\ \null
				\end{array}\right\{$}
			& + c + \cdots
			& \text{if $\sigma\in(\tfrac12,\tfrac32)$}
		\\
			&
			& + \frac{3}{8}\log t + c + \cdots
			& \text{if $\sigma=\tfrac32$}
		\\
			&
			& + \frac{1}{2} \frac{\sigma(\sigma-1)}{2\sigma-3} t^{2\sigma-3}
				+ \cdots
			& \text{if $\sigma\in(\tfrac32,\infty)$}
		\\
    \end{array}\right.
		\quad\text{as $t\to\infty$},
\end{equation*}
where $c$ denotes a constant,  possibly different at different  occurrences,
 depending on $\sigma$ and $\lowerlimit$.
\end{lemma}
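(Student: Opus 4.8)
The plan is to expand the integrand by the generalized binomial series and integrate term by term, isolating the finitely many \emph{critical} exponents $\sigma\in\{-\tfrac12,\tfrac12,\tfrac32\}$ at which one of the resulting antiderivatives degenerates into a logarithm. First I would observe that, under the stated hypotheses, $(\tau^2-1)^\sigma$ is locally integrable on $[d,\infty)$: the only candidate singularity is at $\tau=1$, where $(\tau^2-1)^\sigma\sim(2(\tau-1))^\sigma$ is integrable precisely when $\sigma>-1$, which is exactly why $d>1$ is imposed as soon as $\sigma\le-1$. Hence $\Psi_\sigma$ is well defined and locally absolutely continuous on $(d,\infty)$, with $\Psi_\sigma'(t)=(t^2-1)^\sigma$.

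Next, for large $\tau$ I would use
\[
	(\tau^2-1)^\sigma
		= \tau^{2\sigma}\Bigl(1-\tfrac1{\tau^2}\Bigr)^\sigma
		= \tau^{2\sigma} - \sigma\,\tau^{2\sigma-2}
			+ \tfrac{\sigma(\sigma-1)}{2}\,\tau^{2\sigma-4}
			+ O\bigl(\tau^{2\sigma-6}\bigr)
	\quad\text{as $\tau\to\infty$,}
\]
and integrate the three displayed monomials over $(d,t)$. Each $\tau^{2\sigma-2j}$, $j=0,1,2$, contributes $\frac{t^{2\sigma-2j+1}}{2\sigma-2j+1}$ when $\sigma\ne j-\tfrac12$ and $\log t$ when $\sigma=j-\tfrac12$; constants of integration and lower-limit values are swept into $c$. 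The single bookkeeping point is this: after peeling off the first $j_0\in\{0,1,2\}$ monomials, the remaining integrand is $O(\tau^{2\sigma-2j_0})$, so if $2\sigma-2j_0<-1$ — equivalently $\sigma<j_0-\tfrac12$ — then $\int_d^\infty$ of that remainder converges, whence $\int_d^t$ of it equals a constant (absorbed into $c$) plus a quantity tending to $0$ (contributing to ``$\cdots$''). Choosing $j_0$ suitably for each of the seven ranges of $\sigma$ then reproduces exactly the expansions in the statement, the coefficients $\tfrac1{2\sigma+1}$, $-\tfrac{\sigma}{2\sigma-1}$, $\tfrac12\tfrac{\sigma(\sigma-1)}{2\sigma-3}$, as well as the logarithmic coefficients $1,-\tfrac12,\tfrac38$ at $\sigma=-\tfrac12,\tfrac12,\tfrac32$, being read off directly from the binomial expansion above.

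Finally, I would check that each truncation conforms to the asymptotic convention fixed in \eqref{dic30}, i.e.\ that $\FF-(\EE_1+\cdots+\EE_k)$ is asymptotically equivalent to $\EE_{k+1}$. For the genuinely growing or logarithmic written terms this follows from L'H\^opital's rule: differentiating $\Psi_\sigma(t)$ minus the relevant partial sum against the next term, and using $\Psi_\sigma'(t)=(t^2-1)^\sigma$ together with the elementary derivatives of the monomials and logarithms, reduces every such ratio to a power of $\bigl(1-\tfrac1{t^2}\bigr)$, which tends to $1$. For the terminal constant $\EE_{k+1}=c$ one argues directly, as already indicated, that $\Psi_\sigma(t)$ minus the growing and logarithmic terms converges to a finite limit because the corresponding remainder integrand is $O(\tau^{2\sigma-2j_0})$ with $2\sigma-2j_0<-1$. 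The proof involves no analytic difficulty beyond these elementary estimates; the only part demanding attention is the clerical task of sorting, within each of the seven ranges of $\sigma$, which monomials are displayed explicitly, which collapse into $c$, and which are hidden in ``$\cdots$''.
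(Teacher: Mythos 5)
Your proposal is correct, and it is essentially the argument the paper has in mind: the paper states Lemma~\ref{L:Psi} without proof, remarking only that it follows ``by simple considerations, via L'H\^opital's rule,'' and your binomial expansion of $(\tau^2-1)^\sigma=\tau^{2\sigma}-\sigma\tau^{2\sigma-2}+\tfrac{\sigma(\sigma-1)}{2}\tau^{2\sigma-4}+O(\tau^{2\sigma-6})$, integrated termwise with the convergent remainders absorbed into $c$ and the convention \eqref{dic30} checked via L'H\^opital, is exactly that elementary computation and reproduces all the stated coefficients, including the logarithmic ones at $\sigma=-\tfrac12,\tfrac12,\tfrac32$.
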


\begin{lemma} \label{L:Upsilon}
Assume that either $\sigma \in (-1, \infty)$ and $\lowerlimit\ge 1$, or $\sigma
\in (-\infty, -1]$ and $\lowerlimit>1$.  Let $\Upsilon_\sigma\colon
(a,\infty)\to[0,\infty)$ be the function defined as
\begin{equation}
\label{E:Upsilon}
	\Upsilon_\sigma(t) = \int_{\lowerlimit}^t (\tau^2-1)^\sigma \log(\tau^2-1)\,\d \tau
		\quad\text{for $t>\lowerlimit$.}
\end{equation}
Then
\begin{equation*}
	\Upsilon_\sigma (t)=
		\begin{cases}
			\displaystyle
			c + \cdots
				& \text{if $\sigma \in(-\infty,-\frac12)$}
				\\[\smallskipamount]
			\displaystyle
			(\log t)^2 + \cdots
				& \text{if $\sigma = -\frac12$}
				\\
			\displaystyle
			\frac{2}{2\sigma+1} t^{2\sigma+1} \log t - \frac{2}{(2\sigma+1)^2}t^{2\sigma+1}
				+ \cdots
				& \text{if $\sigma\in(-\frac12,\infty)$}			
		\end{cases}
	\quad\text{as $t\to\infty$},
\end{equation*}
where $c$ is a constant depending on $\sigma$ and $\lowerlimit$.
\end{lemma}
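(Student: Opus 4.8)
The plan is to treat the three regimes of $\sigma$ separately, in each case reducing the claim to L'H\^opital's rule applied to $\Upsilon_\sigma$ and exploiting that its derivative is the explicit function $\Upsilon_\sigma'(t)=(t^2-1)^\sigma\log(t^2-1)$. The one elementary expansion that does all the work is
\begin{equation*}
	(t^2-1)^\sigma\log(t^2-1)
		= 2\,t^{2\sigma}\log t + O\!\bigl(t^{2\sigma-2}\log t\bigr)
		\quad\text{as $t\to\infty$,}
\end{equation*}
obtained by combining $(t^2-1)^\sigma=t^{2\sigma}(1-t^{-2})^\sigma=t^{2\sigma}\bigl(1+O(t^{-2})\bigr)$ with $\log(t^2-1)=2\log t+O(t^{-2})$.

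First, when $\sigma\in(-\infty,-\tfrac12)$ one has $2\sigma<-1$, so by the displayed expansion the integrand is $O(\tau^{2\sigma}\log\tau)$ and hence integrable near infinity; thus $\Upsilon_\sigma(t)$ tends to a finite constant $c=c(\sigma,\lowerlimit)$ as $t\to\infty$, which is exactly the asserted behaviour $\Upsilon_\sigma(t)=c+\cdots$. When $\sigma=-\tfrac12$, both $\Upsilon_{-1/2}(t)$ and $(\log t)^2$ diverge, and L'H\^opital's rule gives
\begin{equation*}
	\lim_{t\to\infty}\frac{\Upsilon_{-1/2}(t)}{(\log t)^2}
		=\lim_{t\to\infty}\frac{(t^2-1)^{-1/2}\log(t^2-1)}{2t^{-1}\log t}
		=\lim_{t\to\infty}\Bigl(t(t^2-1)^{-1/2}\Bigr)\cdot\frac{\log(t^2-1)}{2\log t}=1,
\end{equation*}
that is, $\Upsilon_{-1/2}(t)=(\log t)^2+\cdots$.

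For $\sigma\in(-\tfrac12,\infty)$, set $\EE_1(t)=\frac{2}{2\sigma+1}t^{2\sigma+1}\log t$ and $\EE_2(t)=-\frac{2}{(2\sigma+1)^2}t^{2\sigma+1}$; since $2\sigma+1>0$ both diverge, so L'H\^opital's rule applies at each step. A first application, using $\EE_1'(t)=2t^{2\sigma}\log t+\frac{2}{2\sigma+1}t^{2\sigma}$ and the displayed expansion, yields $\Upsilon_\sigma(t)/\EE_1(t)\to1$. For the second-order term one differentiates once more: with $\Upsilon_\sigma'(t)-\EE_1'(t)=(t^2-1)^\sigma\log(t^2-1)-2t^{2\sigma}\log t-\frac{2}{2\sigma+1}t^{2\sigma}$ and $\EE_2'(t)=-\frac{2}{2\sigma+1}t^{2\sigma}$, the quotient of derivatives equals $1+O(t^{-2}\log t)\to1$, because $(t^2-1)^\sigma\log(t^2-1)-2t^{2\sigma}\log t=O(t^{2\sigma-2}\log t)=o(t^{2\sigma})$ by the displayed expansion. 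Hence $\Upsilon_\sigma(t)=\EE_1(t)+\EE_2(t)+\cdots$, as claimed.

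I do not expect a genuine obstacle: the only point requiring a little care is verifying the hypotheses of L'H\^opital's rule (divergence of the denominators, and existence of the limit of the quotient of derivatives), both of which follow at once from the single displayed expansion above. As an alternative to the second L'H\^opital step, one may simply check by differentiation that $\frac{\d}{\d t}\bigl[\EE_1(t)+\EE_2(t)\bigr]$ agrees with $(t^2-1)^\sigma\log(t^2-1)$ up to a term of order $t^{2\sigma-2}\log t$, which reproves the expansion without passing to the limit.
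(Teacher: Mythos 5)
Your proof is correct and follows exactly the route the paper intends: Lemma~\ref{L:Upsilon} is stated there without proof, with the remark that it follows ``by simple considerations, via L'H\^opital's rule,'' and your argument—expanding the integrand as $2t^{2\sigma}\log t + O(t^{2\sigma-2}\log t)$ and applying L'H\^opital once (or twice for the second-order term) in each regime of $\sigma$—is precisely that computation, with the hypotheses of the rule properly checked.
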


\begin{lemma} \label{L:a-invers}
Let $B$ be a Young function obeying \eqref{BN} for some $\beta >0$ and $N>0$, and let
$b\colon[0,\infty)\to[0,\infty)$ be the left-continuous function such that
\begin{equation}\label{B}
	B(t)=\int_{0}^t b(\tau)\,\d\tau
		\quad\text{ for $t>0$.}
\end{equation}
  Then
\begin{equation*}
	b^{-1}(t)
		= (\log t)^\ib
		+ \frac{1-\beta}{\beta^2}
			(\log t)^{\ib-1}\log\log t
		- \frac{\log N\beta}{\beta} (\log t)^{\ib-1}
		+ \cdots
			\quad\text{as $t\to\infty$}.
\end{equation*}
\end{lemma}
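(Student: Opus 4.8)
The plan is to compute $b^{-1}$ by inverting the explicit formula for $b$ that is available near infinity and then solving the resulting transcendental equation by successive approximation. First I would fix $t_0$ so large that $\Exp^\beta(t)=e^{t^\beta}$ for $t\ge t_0$; then $B(t)=Ne^{t^\beta}$ for $t>t_0$, so that $b$ agrees there with $B'$, namely $b(t)=N\beta\,t^{\beta-1}e^{t^\beta}$. A glance at the derivative of $t\mapsto t^{\beta-1}e^{t^\beta}$ shows that, after enlarging $t_0$ if necessary, $b$ is strictly increasing on $(t_0,\infty)$, hence invertible near infinity; since only the behaviour of $b^{-1}(t)$ as $t\to\infty$ is at stake, the form of $B$ away from infinity is irrelevant. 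Writing $s=b^{-1}(t)$, so that $s\to\infty$ as $t\to\infty$, the relation to be analysed is $t=N\beta\,s^{\beta-1}e^{s^\beta}$, equivalently, upon taking logarithms,
\[
	s^\beta = \log t - \log N\beta - (\beta-1)\log s .
\]

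Next I would bootstrap this identity. Dividing through by $s^\beta$ and letting $s\to\infty$ gives $s^\beta\sim\log t$, whence $s\sim(\log t)^{\ib}$ and $\log s=\tfrac1\beta\log\log t+O\bigl(\tfrac{\log\log t}{\log t}\bigr)$. Re-inserting this into the identity yields the refined expansion
\[
	s^\beta = \log t - \frac{\beta-1}{\beta}\log\log t - \log N\beta
		+ O\!\left(\frac{\log\log t}{\log t}\right)
		\quad\text{as $t\to\infty$,}
\]
a single iteration sufficing because the quantity fed back in is already of lower order than the terms that have been isolated.

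Finally I would raise to the power $\ib$: factoring $\log t$ out of the right-hand side above and using $(1+x)^{\ib}=1+\tfrac1\beta x+O(x^2)$ with $x=O\bigl(\tfrac{\log\log t}{\log t}\bigr)$, one obtains
\[
	b^{-1}(t) = (\log t)^{\ib}
		- \frac{\beta-1}{\beta^2}(\log t)^{\ib-1}\log\log t
		- \frac{\log N\beta}{\beta}(\log t)^{\ib-1}
		+ O\!\left((\log t)^{\ib-2}(\log\log t)^2\right),
\]
which is exactly the asserted expansion, since $\frac{\beta-1}{\beta^2}=-\frac{1-\beta}{\beta^2}$ and the $O$-term is of smaller order than the third term (it is a factor $(\log\log t)^2/\log t$ times it). The limit relations in the sense of \eqref{dic30} then follow, and, as for the two preceding lemmas, may alternatively be read off directly through L'H\^opital's rule. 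The hard part will be the careful bookkeeping of remainders across the two nonlinear steps---the logarithm and the power $\ib$---making sure that everything discarded is genuinely negligible compared with $\EE_3$; the degenerate case $\beta=1$ requires no argument, since then $b^{-1}(t)=\log t-\log N$ exactly.
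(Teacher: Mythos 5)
Your argument is correct: the paper states Lemma~\ref{L:a-invers} without proof (it is one of the three lemmas said to follow from ``simple considerations''), and your route --- computing $b(t)=N\beta\,t^{\beta-1}e^{t^\beta}$ near infinity, checking it is eventually strictly increasing so the (generalized) inverse is a genuine inverse there, taking logarithms to get $s^\beta=\log t-\log N\beta-(\beta-1)\log s$, bootstrapping once, and expanding the power $\frac{1}{\beta}$ --- is exactly the elementary computation the authors have in mind. Your bookkeeping is sound: the discarded terms are $O\bigl((\log t)^{\frac{1}{\beta}-2}(\log\log t)^2\bigr)$ and $O\bigl((\log t)^{\frac{1}{\beta}-2}\log\log t\bigr)$, both of strictly lower order than the $(\log t)^{\frac{1}{\beta}-1}$ term, and the degenerate case $\beta=1$ (where the middle coefficient vanishes and $b^{-1}(t)=\log t-\log N$ exactly) is correctly singled out.
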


Two crucial steps in view of the proof of Lemma~\ref{L:I-norm-sharp}
are enucleated in Lemma~\ref{L:I-norm-general} and \ref{L:lambda-t-asymp}.

\begin{lemma}
\label{L:I-norm-general}
Let $\Phi$ be the function defined by \eqref{E:Phi-def}.
Let $B$ be any finite-valued Young function of the form \eqref{B}.
Then
\begin{equation} \label{E:I-norm-general}
	\normIB = \int_{0}^{t} b^{-1}\Bigl( \lambda_t e^{\frac{\tau^2}{2}}\Bigr)\,\d \tau
		\quad\text{for $t>0$},
\end{equation}
where $\lambda_t$ is the unique positive number such that
\begin{equation} \label{E:lambda-t-condition}
	\int_{0}^{t} B\left( b^{-1}\Bigl( \lambda_t e^\frac{\tau^2}{2} \Bigr) \right)
		e^{-\frac{\tau^2}{2}}\,\d \tau
		= \sqrt{2\pi}.
\end{equation}
\end{lemma}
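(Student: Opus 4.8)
The plan is to evaluate $\normIB$ by duality and then, via the substitution $s=\Phi(\tau)$, to reduce the resulting constrained maximisation to an elementary one. By the definition of the Orlicz norm and the identity $\widetilde{\widetilde B}=B$,
\[
	\normIB = \sup\Bigl\{ \int_{\Phi(t)}^{1/2} \tfrac{\psi(s)}{I(s)}\,\d s \ :\ \psi \ge 0,\ \int_{\Phi(t)}^{1/2} B(\psi(s))\,\d s \le 1 \Bigr\}.
\]
I would substitute $s=\Phi(\tau)$: by \eqref{june30} one has $\d s = -I(\Phi(\tau))\,\d\tau$, while $I(\Phi(\tau)) = \tfrac{1}{\sqrt{2\pi}}e^{-\tau^2/2}$ by \eqref{I} and \eqref{E:Phi-def}, and $\tau$ ranges over $(0,t)$ since $\Phi(0)=\tfrac12$ and $\Phi$ is decreasing. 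Writing $g(\tau)=\psi(\Phi(\tau))$, the substitution turns $\int_{\Phi(t)}^{1/2}\psi/I\,\d s$ into $\int_0^t g\,\d\tau$ and $\int_{\Phi(t)}^{1/2}B(\psi)\,\d s$ into $\tfrac{1}{\sqrt{2\pi}}\int_0^t B(g(\tau))e^{-\tau^2/2}\,\d\tau$, and as $\psi$ runs over the nonnegative measurable functions on $(\Phi(t),\tfrac12)$ the function $g$ runs over all nonnegative measurable functions on $(0,t)$. Hence
\[
	\normIB = \sup\Bigl\{ \int_0^t g(\tau)\,\d\tau \ :\ g\ge 0,\ \int_0^t B(g(\tau))\,e^{-\tau^2/2}\,\d\tau \le \sqrt{2\pi} \Bigr\}.
\]

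Next I would exhibit the maximiser. Let $\lambda_t>0$ be a number for which \eqref{E:lambda-t-condition} holds (its existence and uniqueness are addressed below), and set $g_t(\tau)=b^{-1}(\lambda_t e^{\tau^2/2})$ for $\tau\in(0,t)$; this function is finite there, since $b$ is finite-valued and $e^{\tau^2/2}\le e^{t^2/2}$, and it is admissible in the reduced problem by \eqref{E:lambda-t-condition}. For an arbitrary admissible $g$, Young's inequality \eqref{dec12}, applied to the pair $g(\tau)$ and $\lambda_t e^{\tau^2/2}$, reads $g(\tau)\lambda_t e^{\tau^2/2}\le B(g(\tau))+\widetilde B(\lambda_t e^{\tau^2/2})$, whereas the equality case of \eqref{dec12} gives $g_t(\tau)\lambda_t e^{\tau^2/2}= B(g_t(\tau))+\widetilde B(\lambda_t e^{\tau^2/2})$, precisely because $g_t(\tau)=b^{-1}(\lambda_t e^{\tau^2/2})$. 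Subtracting these two relations, multiplying by $e^{-\tau^2/2}$ and integrating over $(0,t)$ gives
\[
	\lambda_t \int_0^t \bigl(g-g_t\bigr)\,\d\tau \le \int_0^t \bigl(B(g)-B(g_t)\bigr)e^{-\tau^2/2}\,\d\tau = \int_0^t B(g)\,e^{-\tau^2/2}\,\d\tau - \sqrt{2\pi} \le 0,
\]
whence $\int_0^t g\,\d\tau \le \int_0^t g_t\,\d\tau$. Therefore $g_t$ maximises the reduced problem and $\normIB = \int_0^t g_t(\tau)\,\d\tau = \int_0^t b^{-1}(\lambda_t e^{\tau^2/2})\,\d\tau$, which is \eqref{E:I-norm-general}.

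It remains to justify that \eqref{E:lambda-t-condition} has a unique solution $\lambda_t$. The left-hand side of \eqref{E:lambda-t-condition}, as a function of $\lambda_t\in(0,\infty)$, is non-decreasing and (left-)continuous, being built from the non-decreasing functions $B$ and $b^{-1}$ and the continuous $B$; it tends to $0$ as $\lambda_t\to 0^+$, because $B$ vanishes on the interval on which $b$ vanishes, and (in the situations of interest, where $B$ grows superlinearly, cf.\ \eqref{BN}) it diverges as $\lambda_t\to\infty$, so that $b^{-1}(y)\to\infty$ and $B(b^{-1}(y))\to\infty$. Hence it attains the value $\sqrt{2\pi}$ at some $\lambda_t$, unique up to the value assigned to $b^{-1}$ at a single possible jump, which affects neither integral in \eqref{E:lambda-t-condition} nor \eqref{E:I-norm-general}. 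The delicate point in this plan is exactly this last bookkeeping for a general finite-valued (hence possibly not strictly convex) Young function, where $b^{-1}$ is only a generalised inverse; the finiteness of $B$ together with the boundedness of the interval $(0,t)$ keeps it routine, and the genuine content of the lemma is the reduction in the first step combined with the equality case of Young's inequality used in the second.
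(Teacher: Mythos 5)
Your proposal is correct and follows essentially the same route as the paper: duality for the Orlicz norm, the change of variables $s=\Phi(\tau)$ via \eqref{june30}, and Young's inequality \eqref{dec12} together with its equality case at $b^{-1}\bigl(\lambda_t e^{\tau^2/2}\bigr)$. The only cosmetic difference is that you compare an arbitrary admissible $g$ directly with the candidate maximizer, whereas the paper rescales by $\lambda_t$ and evaluates the supremum explicitly; your closing remarks on existence and uniqueness of $\lambda_t$ go slightly beyond the paper, which merely asserts uniqueness from the monotonicity of $B\circ b^{-1}$.
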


\begin{proof}
Fix $t\in(0,\infty)$. By the definition of  Orlicz norm,
\begin{equation}\label{dec31}
	\normIB = \sup\left\{ \int_{\Phi(t)}^{\frac{1}{2}} \frac{g(s)}{I(s)}\,\d s:
		\int_{\Phi(t)}^{\frac{1}{2}} B(|g(s)|)\,\d s \le 1
		\right\}.
\end{equation}
The change of variables $s=\Phi(\tau)$ and  $f=g(\Phi)$ in both integrals in
\eqref{dec31} yield, via \eqref{june30},
\begin{equation*}
	\normIB = \sup\left\{ \int_{0}^{t} f(\tau)\,\d \tau:
		\int_{0}^{t} B\left(|f(\tau)|\right) e^{-\frac{\tau^2}{2}}\,\d \tau\le \sqrt{2\pi}
		\right\}.
\end{equation*}
Given any function $f\in\MM(0,t)$ such that $f\ge 0$ and any
$\lambda_t>0$, from Young's inequality \eqref{dec12} we infer that
\begin{equation} \label{mar8}
	\int_{0}^{t} f(\tau)\,\d \tau
		= \int_{0}^{t} \frac{f(\tau)}{\lambda_t} \lambda_t e^{\frac{\tau^2}{2}}
				e^{-\frac{\tau^2}{2}}\,\d \tau
		\le \int_{0}^{t} B\left( \frac{f(\tau)}{\lambda_t} \right)
				e^{-\frac{\tau^2}{2}}\,\d \tau
			+ \int_{0}^{t} \tilde B\Bigl( \lambda_t e^{\frac{\tau^2}{2}} \Bigr)
				e^{-\frac{\tau^2}{2}}\,\d \tau.
\end{equation}
Define the function $f_t\colon[0,t]\to[0,\infty)$ as
\begin{equation*}
	f_t(\tau) = \lambda_t\, b^{-1} \Bigl(\lambda_t e^{\frac{\tau^2}{2}}\Bigr)
		\quad\text{for $\tau\in[0,t]$}.
\end{equation*}
By the case of equality in Young's inequality \eqref{dec12},
\begin{equation} \label{mar9}
	f_t\left(\tau\right)e^{\frac{\tau^2}{2}}
		= \lambda_t e^{\frac{\tau^2}{2}} b^{-1} \Bigl(\lambda_t e^{\frac{\tau^2}{2}}\Bigr)
		= B\left( \frac{f_t(\tau)}{\lambda_t} \right)
			+  \tilde B\Bigl( \lambda_t e^{\frac{\tau^2}{2}} \Bigr)
		\quad\text{for $\tau\in[0,t]$}.
\end{equation}
Now, assume that $\lambda_t$ obeys \eqref{E:lambda-t-condition}, namely
\begin{equation} \label{mar10}
	\int_{0}^{t} B\left( \frac{f_t(\tau)}{\lambda_t} \right) e^{-\frac{\tau^2}{2}}\,\d \tau
		= \sqrt{2\pi}.
\end{equation}
Observe that $\lambda_t$ is uniquely defined for each $t\in(0,\infty)$ by the
monotonicity of the function $B\circ b^{-1}$.
Combining \eqref{mar9} with \eqref{mar10}, one obtains that
\begin{equation} \label{mar11}
	\int_{0}^{t} f_t(\tau)\,\d \tau
		= \sqrt{2\pi}
		+ \int_{0}^{t} \tilde B\Bigl( \lambda_t e^{\frac{\tau^2}{2}} \Bigr)
				e^{-\frac{\tau^2}{2}}\,\d \tau,
\end{equation}
whence, via \eqref{mar8} and \eqref{mar11},
\begin{align*}
	 \sup\left\{ \int_{0}^{t} f(\tau)\,\d \tau:
		\int_{0}^{t} B\left( \frac{f(\tau)}{\lambda_t} \right)
			e^{-\frac{\tau^2}{2}}\,\d \tau\le \sqrt{2\pi}
	\right\}
		= \sqrt{2\pi}
		+ \int_{0}^{t} \tilde B\Bigl( \lambda_t e^{\frac{\tau^2}{2}} \Bigr)
				e^{-\frac{\tau^2}{2}}\,\d \tau
		= \int_{0}^{t} f_t(\tau)\,\d \tau.
\end{align*}
Therefore
\begin{align*}
	\normIB
		& = \sup\left\{ \int_{0}^{t} \frac{f(\tau)}{\lambda_t}\,\d \tau:
			\int_{0}^{t} B\left( \frac{f(\tau)}{\lambda_t} \right)
				e^{-\frac{\tau^2}{2}}\,\d \tau\le \sqrt{2\pi}
			\right\}
			\\
		& = \frac{1}{\lambda_t} \int_{0}^{t} f_t(\tau)\,\d \tau
			= \int_{0}^{t} b^{-1} \Bigl(\lambda_t e^{\frac{\tau^2}{2}}\Bigr)\,\d \tau,
\end{align*}
and \eqref{E:I-norm-general} follows.
\end{proof}

\begin{lemma}
\label{L:lambda-t-asymp}
Let $B$ be a Young function of the form \eqref{BN}
for some $\beta \in (0,2]$ and $N>0$. Assume that $\lambda_t$
satisfies \eqref{E:lambda-t-condition}.
Then
\begin{equation}
\label{E:log-lambda-t}
	\lambda_t = c_\beta
	\begin{cases}
		t^{1-\frac{2}{\beta}}+ \cdots
			& \text{if $\beta\in(0,2)$}
			\\
		\frac{1}{\log t} + \cdots
			& \text{if $\beta=2$}
	\end{cases}
	\quad\text{as $t\to\infty$,}
\end{equation}
where
\begin{equation*}
	c_{\beta} = 2^{\frac{1}{\beta}-\frac{1}{2}} \sqrt{\pi}
	\begin{cases}
		2-\beta
			& \text{if $\beta\in(0,2)$}
			\\
		2
			& \text{if $\beta=2$}.
	\end{cases}
\end{equation*}
\end{lemma}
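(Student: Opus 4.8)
The plan is to feed the expansion of $b^{-1}$ from Lemma~\ref{L:a-invers} into the defining relation \eqref{E:lambda-t-condition} and to read off $\lambda_t$ from the resulting integral of a power of $\tfrac{\tau^2}{2}-\log\lambda_t$, for which Lemma~\ref{L:Psi} is tailor-made. The preparatory observation is that differentiating \eqref{BN} gives $b(t)=\beta t^{\beta-1}B(t)$ for $t>t_0$, hence
\[
	B\bigl(b^{-1}(r)\bigr)=\tfrac1\beta\,r\,b^{-1}(r)^{1-\beta}
		\qquad\text{whenever }b^{-1}(r)\ge t_0 .
\]
Writing $g_t(\tau)=B\bigl(b^{-1}(\lambda_t e^{\tau^2/2})\bigr)e^{-\tau^2/2}$ for the integrand in \eqref{E:lambda-t-condition} and $\ell_t=-\log\lambda_t$, this yields $g_t(\tau)=\tfrac{\lambda_t}{\beta}\,b^{-1}(\lambda_t e^{\tau^2/2})^{1-\beta}$ wherever $b^{-1}(\lambda_t e^{\tau^2/2})\ge t_0$, and then, by Lemma~\ref{L:a-invers}, $g_t(\tau)=\tfrac{\lambda_t}{\beta}\bigl(\tfrac{\tau^2}{2}-\ell_t\bigr)^{\sigma}\bigl(1+\eta(\lambda_t e^{\tau^2/2})\bigr)$, where $\sigma=\tfrac{1-\beta}{\beta}$ and $\eta(r)\to0$ as $r\to\infty$.

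I would first note that $\lambda_t$ is non-increasing: since $B\circ b^{-1}$ is non-decreasing (as in the proof of Lemma~\ref{L:I-norm-general}), the left side of \eqref{E:lambda-t-condition} is non-decreasing in $t$ for fixed $\lambda$ and in $\lambda$ for fixed $t$. Hence $\lambda_t\downarrow\lambda_\infty\ge0$; were $\lambda_\infty>0$, the displayed identity would make the integrand in \eqref{E:lambda-t-condition} with $\lambda_\infty$ in place of $\lambda_t$ behave like $\tfrac{\lambda_\infty}{\beta}(\tfrac{\tau^2}{2})^{\sigma}$ as $\tau\to\infty$, which fails to be integrable on $(0,\infty)$ because $2\sigma=\tfrac2\beta-2\ge-1$ for $\beta\le2$; letting $t\to\infty$ contradicts \eqref{E:lambda-t-condition}, so $\lambda_t\to0$. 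Next I would extract \emph{crude} bounds on $\ell_t$ by testing \eqref{E:lambda-t-condition} against explicit small $\lambda$ and using monotonicity in $\lambda$: for such a $\lambda$, split the integral at the level where $b^{-1}(\lambda e^{\tau^2/2})$ reaches $t_0$; below it the integrand is $\le B(t_0)e^{-\tau^2/2}$ and, on any fixed bounded $\tau$-interval, tends to $0$, so this part is $o(1)$, while above it the displayed identity lets one estimate the remainder directly. Taking $\lambda=e^{-\delta t^2}$ for $\beta\in(0,2)$ and $\lambda=t^{-\varepsilon}$ for $\beta=2$, the whole integral tends to $0<\sqrt{2\pi}$, forcing $\lambda_t>e^{-\delta t^2}$ for every $\delta>0$ (resp.\ $\lambda_t>t^{-\varepsilon}$ for every $\varepsilon>0$); thus $\ell_t=o(t^2)$ when $\beta<2$ and $\ell_t=o(\log t)$ when $\beta=2$.

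For the sharp analysis, fix $R_0$ large, put $\tau_\ast=\tau_\ast(t)=\sqrt{2\log(R_0/\lambda_t)}$ (finite and $o(t)$ for $t$ large), and split \eqref{E:lambda-t-condition} at $\tau_\ast$. The splitting argument above gives $\int_0^{\tau_\ast}g_t\to0$, hence $\int_{\tau_\ast}^{t}g_t\to\sqrt{2\pi}$. On $[\tau_\ast,t]$ one has $\lambda_t e^{\tau^2/2}\ge R_0$, so there $g_t(\tau)=\tfrac{\lambda_t}{\beta}\bigl(\tfrac{\tau^2}{2}-\ell_t\bigr)^{\sigma}(1+\eta)$ with $\sup_{r\ge R_0}|\eta(r)|\to0$ as $R_0\to\infty$, while moving the lower limit from $\tau_\ast$ to $\sqrt{2\ell_t}$ alters $\int\bigl(\tfrac{\tau^2}{2}-\ell_t\bigr)^\sigma\d\tau$ only by $o(1)$ (a short computation using $\tau_\ast/\sqrt{2\ell_t}\to1$ and $\ell_t\to\infty$). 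Letting $t\to\infty$ and then $R_0\to\infty$ gives
\[
	\frac{\lambda_t}{\beta}\,J_t\longrightarrow\sqrt{2\pi},
	\qquad
	J_t:=\int_{\sqrt{2\ell_t}}^{t}\Bigl(\tfrac{\tau^2}{2}-\ell_t\Bigr)^{\sigma}\d\tau .
\]
The substitution $\tau=\sqrt{2\ell_t}\,w$ turns $J_t$ into $\sqrt2\,\ell_t^{\,\sigma+\frac12}\,\Psi_\sigma\bigl(t/\sqrt{2\ell_t}\bigr)$, with $\Psi_\sigma$ as in Lemma~\ref{L:Psi} with $d=1$ (admissible since $\sigma>-1$), and $t/\sqrt{2\ell_t}\to\infty$ by the crude bounds. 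Lemma~\ref{L:Psi} then gives $\Psi_\sigma(Z)\sim\frac{Z^{2\sigma+1}}{2\sigma+1}$ when $\beta<2$ (where $\sigma>-\tfrac12$), so the powers of $\ell_t$ cancel and $J_t\sim\frac{2^{-\sigma}}{2\sigma+1}\,t^{2\sigma+1}$; and $\Psi_{-1/2}(Z)\sim\log Z$ when $\beta=2$ (where $\sigma=-\tfrac12$), so $J_t\sim\sqrt2\,\log(t/\sqrt{2\ell_t})\sim\sqrt2\,\log t$ by $\ell_t=o(\log t)$. Inserting these into $\tfrac{\lambda_t}{\beta}J_t\to\sqrt{2\pi}$ and simplifying with $2\sigma+1=\tfrac{2-\beta}{\beta}$, $2^{-\sigma}=2^{1-1/\beta}$ and $\sqrt{2\pi}=2^{1/2}\sqrt\pi$ reproduces \eqref{E:log-lambda-t}, including the constant $c_\beta$.

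I expect the main obstacle to be the bootstrap nature of the estimate: $\ell_t$ is a priori unknown, yet one must already locate the bulk of the integral in \eqref{E:lambda-t-condition} in order to control the error terms, so the crude bounds on $\ell_t$ have to be secured before the sharp asymptotics; moreover $\beta=2$ is genuinely delicate, because there $J_t\sim\sqrt2\,\log(t/\sqrt{2\ell_t})$ and only the stronger crude bound $\ell_t=o(\log t)$ delivers $J_t\sim\sqrt2\,\log t$.
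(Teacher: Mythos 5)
Your argument is correct, and it reaches \eqref{E:log-lambda-t} by a genuinely different route from the paper. The paper never establishes a priori bounds on $-\log\lambda_t$; instead it proves the asymptotics by contradiction, assuming along a sequence $\{t_k\}$ that $\lambda_{t_k}$ is too large (\eqref{E:4.44}) or too small (\eqref{E:4.45}), and in each case it must control $\tau(t_k)$ and $\sigma(t_k)$ separately -- the delicate point being precisely that $t_k/\sigma(t_k)\to\infty$ is not known in advance, which forces the subcase analysis on whether $t_k/\tau(t_k)$ stays bounded, with the one-sided bounds \eqref{may1} and \eqref{dec38} on $B\circ b^{-1}$ doing the work. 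You instead secure the crude bounds $-\log\lambda_t=o(t^2)$ for $\beta\in(0,2)$ and $-\log\lambda_t=o(\log t)$ for $\beta=2$ up front, by inserting explicit test values $\lambda=e^{-\delta t^2}$, respectively $\lambda=t^{-\varepsilon}$, into \eqref{E:lambda-t-condition} and invoking monotonicity in $\lambda$; this immediately yields $t/\sqrt{2\ell_t}\to\infty$, after which a direct evaluation (split at $\tau_\ast$, dominated convergence for the lower piece, the uniform multiplicative error from Lemma~\ref{L:a-invers} for the upper piece, then $t\to\infty$ followed by $R_0\to\infty$) gives $\tfrac{\lambda_t}{\beta}J_t\to\sqrt{2\pi}$ and Lemma~\ref{L:Psi} produces the constant, which you verify agrees with $c_\beta$. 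Your route is more transparent and avoids the dichotomy and its subcases; the paper's route avoids any separate bootstrap lemma and only ever needs one-sided estimates on $B\circ b^{-1}$. Two small points you should make explicit if you write this up: the claim $\int_0^{\tau_\ast}g_t\,\d\tau\to 0$ is a dominated convergence argument (dominate by $B\bigl(b^{-1}(R_0)\bigr)e^{-\tau^2/2}$ on the region $\lambda_t e^{\tau^2/2}\le R_0$, where the integrand also tends to $0$ pointwise since $B(b^{-1}(r))\le r\,b^{-1}(r)\to 0$ as $r\to 0^+$), and the identity $B(b^{-1}(r))=\tfrac1\beta r\,b^{-1}(r)^{1-\beta}$ requires $r$ large enough that $b^{-1}(r)\ge t_0$ and $b(b^{-1}(r))=r$, so $R_0$ must be chosen above $b(t_0)$; neither affects the validity of the proof.
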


\begin{proof}
One has that
\begin{equation*}
	B(t)
		= Ne^{t^\beta}
		= \ib t^{1-\beta}b(t)
\end{equation*}
for large $t$.  Hence, by Lemma~\ref{L:a-invers} and equation \eqref{E:power},
\begin{equation} \label{may2}
	B(b^{-1}(t))
 	= \ib t(\log t)^{\frac{1}{\beta}-1}
			+ \frac{(1-\beta)^2}{\beta^3} t (\log t)^{\ib-2}\log\log t
			+ \cdots
			\quad\text{as $t\to\infty$.}
\end{equation}
If $\beta\neq 1$, the second addend on the right-hand side of equation
\eqref{may2} is strictly positive. Consequently, there exists
$t_0\in(1,\infty)$ such that
\begin{equation} \label{may1}
	B(b^{-1}(t)) \ge \frac{t}{\beta} (\log t)^{\frac{1}{\beta}-1}
		\quad\text{for $t>t_0$.}
\end{equation}
If $\beta=1$, then $B=b$ near infinity, and \eqref{may1} holds, as
equality, as well.

Now, note that  $\lambda_t$ is a decreasing function of $t$,  by the
monotonicity of the function $B\circ b^{-1}$.
We claim that
\begin{equation}\label{dec35}
	\lim_{t\to\infty}\lambda_t = 0.
\end{equation}
To see this, assume, by contradiction, that
$\lim_{t\to\infty}\lambda_t=\lambda$ for some $\lambda>0$.  Choose $\tau_0>0$
so that $\lambda e^{\frac{\tau^2}{2}}\ge t_0$ for $\tau>\tau_0$. Then, letting
$t\to\infty$ in \eqref{E:lambda-t-condition}, yields, by Fatou's lemma,
\begin{align*}
	\sqrt{2\pi}
		\ge \int_{0}^{\infty} B\left( b^{-1}\Bigl( \lambda e^\frac{\tau^2}{2} \Bigr) \right)
			e^{-\frac{\tau^2}{2}}\,\d \tau
		\ge \int_{\tau_0}^{\infty} B\left( b^{-1}\Bigl(\lambda e^\frac{\tau^2}{2}\Bigr)\right)
		e^{-\frac{\tau^2}{2}}\,\d \tau
		\ge \frac{\lambda}{\beta}
			\int_{\tau_0}^{\infty} \left(\frac{\tau^2}{2}+\log\lambda\right)^{\ib-1}\,\d \tau.
\end{align*}
This is impossible, since the last integral diverges, inasmuch as
$2/\beta-2\ge-1$. Equation \eqref{dec35} is therefore established.

Now, fix $t\geq t_0$ so large  that $\lambda_{t}<1$,
and set
\begin{equation} \label{may3}
	\tau(t)=\sqrt{2\log\frac{t_0}{\lambda_t}}.
\end{equation}
If $t$ is such that
\begin{equation}\label{dec36}
\tau(t)<t,
\end{equation}
 then, owing to equation \eqref{may1},
\begin{align} \label{may5}
	\begin{split}
	\sqrt{2\pi}
		 & = \int_{0}^{t}
			B\left( b^{-1}\Bigl( \lambda_t e^\frac{\tau^2}{2} \Bigr) \right)
				e^{-\frac{\tau^2}{2}}\,\d \tau
				\\
			& \ge \int_{\tau(t)}^{t}
			B\left( b^{-1}\Bigl( \lambda_t e^\frac{\tau^2}{2} \Bigr) \right)
				e^{-\frac{\tau^2}{2}}\,\d \tau
				 \ge \frac{\lambda_t}{\beta}
			\int_{\tau(t)}^{t}\left(\frac{\tau^2}{2}+\log\lambda_t \right)^{\ib-1}\d\tau.
	\end{split}
\end{align}
Hence, by the change of variables $\tau =r\sigma(t)$, where
\begin{equation} \label{E:def-of-tau}
	\sigma(t) = \sqrt{2\log\frac{1}{\lambda_t}},
\end{equation}
one obtains that
\begin{align} \label{may2a}
	\begin{split}
	\int_{\tau(t)}^{t}\left(\frac{\tau^2}{2}+\log\lambda_t \right)^{\ib-1}\,\d \tau
		& = \int_{\tau(t)}^{t}\left(\frac{\tau^2}{2}-\frac{\sigma(t)^2}{2} \right)^{\ib-1}\,\d \tau
			\\
		& = 2^{1-\ib} \sigma(t)^{\frac{2}{\beta}-1}\int_{\tau(t)/\sigma(t)}^{t/\sigma(t)} (r^2-1)^{\ib-1}\,\d r
			\\
		& = 2^{1-\ib} \sigma(t)^{\frac{2}{\beta}-1}
			\left[ \Psi_{\ib-1}\left( \frac{t}{\sigma(t)} \right)
				- \Psi_{\ib-1}\left( \frac{\tau(t)}{\sigma(t)} \right)\right].
	\end{split}
\end{align}
Here, $\Psi_{\ib-1}$ denotes the function defined   as in \eqref{E:Psi}, with
$\lowerlimit=1$.  If, in addition to  \eqref{dec36}, we assume that
\begin{equation}\label{dec37}
	\lim_{t\to\infty} \frac{t}{\sigma(t)}
		= \infty,
\end{equation}
then, since $\lim_{t\to\infty} \tau(t)/\sigma(t)= 1$,  Lemma~\ref{L:Psi} entails that
\begin{equation} \label{may6}
	\int_{\tau(t)}^{t}\left(\frac{\tau^2}{2}+\log\lambda_t \right)^{\ib-1}\,\d \tau
		= 2^{1-\ib} \begin{cases}
			\frac{\beta}{2-\beta} t^{\iib-1}
				+ \cdots
				&\text{if $\beta\in(0,2)$}
				\\
			\log t - \log \sigma(t)
				+ \cdots
				&\text{if $\beta=2$}
		\end{cases}
	\quad\text{as $t\to\infty$}.
\end{equation}
Next, fix $\varepsilon>0$ and observe that equation \eqref{may2} implies that
\begin{equation}\label{dec38}
	B(b^{-1}(t)) \le \frac{1+\varepsilon}{\beta} t (\log t)^{\ib-1}
		\quad\text{for $t>t_0$.}
\end{equation}
We may assume, without loss of generality,  that inequality \eqref{dec38} holds
with the same $t_0$ as in~\eqref{may1}, by choosing a larger value of $t_0$, if
necessary.  Since $B$ is a Young function, we have that $B(t)\le tb(t)$ for
$t>0$, whence
\begin{equation*}
	B(b^{-1}(t))\le tb^{-1}(t)
		\quad\text{for $t>0$}.
\end{equation*}
Therefore, if $\tau(t)<t$, then
\begin{align} \label{may1a}
	\begin{split}
	\sqrt{2\pi}
		& = \int_{0}^{\tau(t)}
			B\left( b^{-1}\Bigl( \lambda_t e^\frac{\tau^2}{2} \Bigr) \right)
				e^{-\frac{\tau^2}{2}}\,\d \tau
			+ \int_{\tau(t)}^{t}
			B\left( b^{-1}\Bigl( \lambda_t e^\frac{\tau^2}{2} \Bigr) \right)
				e^{-\frac{\tau^2}{2}}\,\d \tau
				\\
		& \le \lambda_t \int_0^{\tau(t)} b^{-1}\left(\lambda_te^{\frac{\tau^2}{2}}\right)\d \tau
			+ \frac{1+\varepsilon}{\beta} \lambda_t \int_{\tau(t)}^{t}
					\left(\frac{\tau^2}{2}+\log\lambda_t \right)^{\ib-1}\d \tau
				\\
		& \le \lambda_t\,\tau(t)\, b^{-1}\left(\lambda_te^{\frac{\tau(t)^2}{2}}\right)
				+ \frac{1+\varepsilon}{\beta} \lambda_t \int_{\tau(t)}^{t}
					\left(\frac{\tau^2}{2}+\log\lambda_t \right)^{\ib-1}\d \tau
				\\
		& = \lambda_t\,\tau(t)\, b^{-1}(t_0)
				+ \frac{1+\varepsilon}{\beta} \lambda_t \int_{\tau(t)}^{t}
					\left(\frac{\tau^2}{2}+\log\lambda_t \right)^{\ib-1}\d \tau
			\quad\text{for $t\ge t_0$,}
	\end{split}
\end{align}
whereas, if $\tau(t)\ge t$, then
\begin{equation} \label{may1b}
	\sqrt{2\pi}
		\le \int_{0}^{\tau(t)}
			B\left( b^{-1}\Bigl( \lambda_t e^\frac{\tau^2}{2} \Bigr) \right)
				e^{-\frac{\tau^2}{2}}\,\d \tau
		\le \lambda_t\,\tau(t)\, b^{-1}(t_0)
		\quad\text{for $t\ge t_0$.}
\end{equation}
For later use,  observe also that, by the very definition \eqref{may3} of $\tau(t)$,
\begin{equation}\label{E:product-to-zero}
	\lim_{t\to\infty}\lambda_t\, \tau(t)=0.
\end{equation}
Let $\beta\in(0,2)$. In order to prove~\eqref{E:log-lambda-t}, we have to show that
\begin{equation}\label{E:limit-lambda-t}
    \lim_{t\to\infty}\frac{\lambda_t}{c_{\beta}t^{1-\frac{2}{\beta}}}=1.
\end{equation}
Assume, by  contradiction, that~\eqref{E:limit-lambda-t} fails. Then there
exist $\delta>0$ and a sequence $\{t_k\}$ such that $\lim_{k\to\infty}
t_k=\infty$, and  either
\begin{equation}\label{E:4.44}
	\lambda_{t_k}\ge(1+\delta)c_{\beta}t_k^{1-\frac{2}{\beta}},
\end{equation}
or
\begin{equation}\label{E:4.45}
	\lambda_{t_k}\le(1-\delta)c_{\beta}t_k^{1-\frac{2}{\beta}}
\end{equation}
for $k\in\N$.  Firstly, suppose that~\eqref{E:4.44} is in force. Observe that
the sequences $\{\tau(t_k)\}$ and $\{\sigma(t_k)\}$ defined as in~\eqref{may3}
and~\eqref{E:def-of-tau}, respectively, satisfy
\begin{equation} \label{E:4.46}
\sigma(t_k) < \tau(t_k) \le \sqrt{2\left(\frac{2}{\beta}-1\right)\log t_k+2\log\frac{t_0}{(1+\delta)c_{\beta}}}.
\end{equation}
Equation \eqref{E:4.46} ensures that condition \eqref{dec36} is fulfilled with $t=t_k$ for large $k$, namely
\begin{equation}\label{dec200}
\tau(t_k)<t_k,
\end{equation}
 and that the limit in \eqref{dec37} holds when evaluated on the sequence $\{t_k\}$, \ie
\begin{equation}\label{dec201}
	\lim_{k\to\infty}\frac{t_k}{\sigma(t_k)}=\infty.
\end{equation}
Hence, owing  to equations \eqref{may5},~\eqref{may6} and~\eqref{E:4.46},
\begin{align*}
	\sqrt{2\pi}
	& \ge \frac{\lambda_{t_k}}{\beta}\int_{\tau(t_k)}^{t_k}\left(\frac{\tau^2}{2}+\log \lambda_t\right)^{\frac{1}{\beta}-1}\,\d \tau
	= \frac{\lambda_{t_k}}{\beta}2^{1-\frac{1}{\beta}}\frac{\beta}{2-\beta}t_k^{\frac{2}{\beta}-1}+\cdots
		\\
	& \ge (1+\delta)c_{\beta}\frac{2^{1-\frac{1}{\beta}}}{2-\beta}+\cdots
		= (1+\delta) \sqrt{2\pi}+\cdots
\end{align*}
for large $k$, a contradiction.

Secondly, suppose that~\eqref{E:4.45} holds. Then inequality \eqref{dec200} holds,
provided that $k$ is large enough,  otherwise,
by \eqref{may1b} (on taking  a subsequence, if necessary),
\begin{equation} \label{jun9}
	\sqrt{2\pi}
		\le \lambda_{t_k}\,\tau(t_k)\, b^{-1}(t_0),
\end{equation}
which contradicts~\eqref{E:product-to-zero}. If there exist a subsequence of $\{t_k\}$,  still  denoted by $\{t_k\}$, and a constant $c\in[1,\infty)$ such that
$t_k/\tau(t_k)\to c$, then, by \eqref{may2a} and \eqref{may1a},
\begin{align} \label{jun10}
	\begin{split}
	\sqrt{2\pi}
		& \le \lambda_{t_k}\,\tau(t_k)\,b^{-1}(t_0)
			+ \frac{1+\varepsilon}{\beta}\lambda_{t_k}\int_{\tau(t_k)}^{t_k}\left(\frac{\tau^2}{2}+\lambda_{t_k}\right)^{\frac{1}{\beta}-1}\,\d \tau
			\\
		& = \lambda_{t_k}\,\tau(t_k)\,b^{-1}(t_0)
			+ \frac{1+\varepsilon}{\beta}2^{1-\ib}e^{-\frac{\sigma(t_k)^2}{2}}\sigma(t_k)^{\iib-1}
				\left[
					\Psi_{\frac{1}{\beta}-1}\left(\frac{t_k}{\sigma(t_k)}\right)
					-\Psi_{\frac{1}{\beta}-1}\left(\frac{\tau(t_k)}{\sigma(t_k)}\right)
				\right]
	\end{split}
\end{align}
for large $k$.
Clearly,
\begin{equation*}
	\lim_{k\to\infty}
		\left[
			\Psi_{\frac{1}{\beta}-1}\left(\frac{t_k}{\sigma(t_k)}\right)
			- \Psi_{\frac{1}{\beta}-1}\left(\frac{\tau(t_k)}{\sigma(t_k)}\right)
		\right]
	= \Psi_{\frac{1}{\beta}-1}(c),
\end{equation*}
since $\lim_{k\to\infty}\tau(t_k)/\sigma(t_k)=1$. From equation~\eqref{E:product-to-zero}
and the fact that $\lim_{k\to\infty}\sigma(t_k)=\infty$, we conclude that the right-hand side
of~\eqref{jun10} tends to zero as $k\to\infty$, a contradiction.

It remains to consider the case when, up to subsequences,
$t_k/\tau(t_k)\to\infty$. This implies that equation \eqref{dec201} holds  as well. Thus,
by \eqref{may1a} and \eqref{may6},
\begin{equation*}
	\sqrt{2\pi}
		\le \lambda_{t_k}\,\tau(t_k)\, b^{-1}(t_0)
			+ \frac{1+\varepsilon}{\beta}2^{1-\ib}
				\frac{\beta}{2-\beta}\lambda_{t_k}\,t_k^{\frac{2}{\beta}-1}
			+ \cdots
		\quad\text{as $k\to\infty$},
\end{equation*}
whence, owing to~\eqref{E:4.45},
\begin{equation}\label{dec40}
	\sqrt{2\pi}
		\le \lambda_{t_k}\,\tau(t_k)\, b^{-1}(t_0)
			+ (1+\varepsilon)(1-\delta)\sqrt{2\pi} + \cdots
		\quad\text{as $k\to\infty$}.
\end{equation}
This  again leads to a contradiction, provided that  $\varepsilon$ is chosen
so small  that $1+\varepsilon<\frac{1}{1-\delta}$, since   the first addend on
the right-hand side of equation \eqref{dec40} tends to $0$ as $k \to\infty$,
thanks to~\eqref{E:product-to-zero}.

Assume now that $\beta=2$. In order to prove~\eqref{E:log-lambda-t}, we need to show that
\begin{equation}\label{E:limit-lambda-t-beta=2}
    \lim_{t\to\infty}\frac{\lambda_t}{\frac{c_2}{\log t}}=1.
\end{equation}
Suppose, by contradiction, that~\eqref{E:limit-lambda-t-beta=2} fails. Then there exist
$\delta\in(0,1)$ and a  sequence $\{t_k\}$ such that $\lim_{k \to \infty} t_k=\infty$ and either
\begin{equation} \label{may4a}
	\lambda_{t_k}\ge(1+\delta)\frac{c_2}{\log t_k}
\end{equation}
or
\begin{equation} \label{may4b}
	\lambda_{t_k}\le(1-\delta)\frac{c_2}{\log t_k}
\end{equation}
for $k\in\N$.
Assume that \eqref{may4a} is satisfied.
Observe that $\tau(t_k)$ and $\sigma(t_k)$ obey
\begin{equation*}
	\sigma(t_k) < \tau(t_k) \le \sqrt{2\log\log t_k+2\log\frac{t_0}{(1+\delta)c_2}}
\end{equation*}
for large $k$. Hence, $\tau(t_k)<t_k$ for large $k$, and
$t_k/\sigma(t_k)\to\infty$ and $\log t_k/\log \sigma(t_k)\to\infty$ as
$k\to\infty$. Consequently, equations~\eqref{may5}, \eqref{may6} and
\eqref{may4a} imply that
\begin{align*}
	\sqrt{2\pi}
		 \ge \frac{\lambda_{t_k}}{2}\sqrt{2}\left(\log t_k-\log \sigma(t_k)\right) + \cdots
			= \frac{1}{\sqrt{2}}\lambda_{t_k} \log t_k + \cdots
			\ge (1+\delta)\sqrt{2\pi} + \cdots \quad\text{as $k\to\infty$,}
\end{align*}
a contradiction.

Suppose next that \eqref{may4b} is in force.  Notice that
inequalities \eqref{jun9} and \eqref{jun10} also hold if $\beta=2$. The same
argument as in the case when $\beta \in (0, 2)$ then rules out all cases where
there does not exist a~subsequence of $\{t_k\}$, called again $\{t_k\}$, such
that
\begin{equation*}
	\lim_{k\to\infty}\frac{t_k}{\tau(t_k)} = \infty.
\end{equation*}
We can thereby assume that such a subsequence exists. Thanks to the fact that $\sigma(t_k)>1$ if $k$ large enough, we infer from \eqref{may6} and \eqref{may1a} that
\begin{align*}
	\sqrt{2\pi}
		& \le \lambda_{t_k}\,\tau(t_k)\, b^{-1}(t_0)
			+ \frac{1+\varepsilon}{2}\sqrt{2}\lambda_{t_k}(\log t_{k} - \log \sigma(t_{k}))
			+ \cdots
			\\
		& \le \lambda_{t_k}\,\tau(t_k)\, b^{-1}(t_0)
			+ \frac{1+\varepsilon}{2}\sqrt{2}\lambda_{t_k}\log t_{k}
			+ \cdots
			\\
		& = \lambda_{t_k}\,\tau(t_k)\, b^{-1}(t_0)
			+ (1+\varepsilon)(1-\delta)\sqrt{2\pi} + \cdots
			\quad\text{as $k\to\infty$}.
\end{align*}
This is again a contradiction, if $\varepsilon$
is chosen  in such a way that $1+\varepsilon<\frac{1}{1-\delta}$, since the first addend on the rightmost side approaches zero as $k \to \infty$.
\end{proof}

\begin{lemma}
\label{L:I-norm-sharp}
Let $B$ be a Young function of the form \eqref{BN} for some $\beta \in (0,2]$
and $N>0$, and let $\Phi$ and $I$ be the functions given by \eqref{E:Phi-def}
and \eqref{I}, respectively.  Then
\begin{equation*}
	\normIB
		=  2^{-\ib} \frac{\beta}{2+\beta} t^{\iib+1}
			+ 2^{-\ib}
			\begin{cases}
				\displaystyle
				- \frac{2}{2-\beta}t^{\iib-1}\log t
				+ c_{\beta,N} t^{\iib-1}
				+ \cdots
					& \text{if $\beta\in(0,2)$}
					\\[\bigskipamount]
				\displaystyle
				- \frac{1}{2}(\log t)^2
				- \log t \log\log t
				+ \cdots
					& \text{if $\beta=2$}
			\end{cases}
\end{equation*}
as $t\to\infty$, where $c_{\beta,N}$ is a  constant depending on $\beta$
and $N$.
\end{lemma}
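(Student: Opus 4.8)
The plan is to convert $\normIB$ into a one‑dimensional integral by Lemma~\ref{L:I-norm-general} and then extract its asymptotics summand by summand. Since $B$ is a finite‑valued Young function of the form \eqref{B}, that lemma gives
\[
	\normIB=\int_0^t b^{-1}\Bigl(\lambda_t e^{\frac{\tau^2}{2}}\Bigr)\,\d\tau,
\]
with $\lambda_t$ determined by \eqref{E:lambda-t-condition}. I would set $\sigma(t)=\sqrt{2\log(1/\lambda_t)}$, so that $\log\bigl(\lambda_t e^{\tau^2/2}\bigr)=\tfrac12\bigl(\tau^2-\sigma(t)^2\bigr)$ for all $\tau$, and record, via Lemma~\ref{L:lambda-t-asymp}, that $\lambda_t\to0$, hence $\sigma(t)\to\infty$, with $\sigma(t)^2=\bigl(\tfrac{4}{\beta}-2\bigr)\log t-2\log c_\beta+o(1)$ if $\beta\in(0,2)$ and $\sigma(t)^2=2\log\log t-2\log c_2+o(1)$ if $\beta=2$. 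Next I would split the integral at $\tau_L(t)=\sqrt{\sigma(t)^2+2\log L}$, the point where $\lambda_t e^{\tau^2/2}=L$ for a fixed large constant $L>t_0$. Since $b^{-1}$ is non‑decreasing and $\tau_L(t)=\sigma(t)(1+o(1))$, the lower portion is $\int_0^{\tau_L(t)}b^{-1}(\lambda_t e^{\tau^2/2})\,\d\tau\le b^{-1}(L)\,\tau_L(t)=O(\sigma(t))$, which is $o(t^{\iib-1})$ if $\beta\in(0,2)$ and $o((\log t)(\log\log t))$ if $\beta=2$, hence negligible against the asserted correction terms.

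On $\tau\in(\tau_L(t),t)$, where the argument of $b^{-1}$ exceeds $L$, I would substitute the expansion of $b^{-1}$ from Lemma~\ref{L:a-invers}, evaluated at $\log(\lambda_t e^{\tau^2/2})=\tfrac12(\tau^2-\sigma(t)^2)$: this writes the integrand as a multiple of $(\tau^2-\sigma(t)^2)^{\ib}$, a multiple of $(\tau^2-\sigma(t)^2)^{\ib-1}\log(\tau^2-\sigma(t)^2)$, a multiple of $(\tau^2-\sigma(t)^2)^{\ib-1}$, and a remainder which, $L$ being large, is an arbitrarily small multiple of $(\tau^2-\sigma(t)^2)^{\ib-1}$. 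The substitution $\tau=\sigma(t)r$ then reduces the integrals of these summands to the functions $\Psi_p,\Upsilon_p$ of Lemmas~\ref{L:Psi} and~\ref{L:Upsilon} with $d=1$: up to their values at the lower endpoint $\tau_L(t)/\sigma(t)\to1$ (which, together with the remainder, contribute only lower‑order quantities since $\Psi_p(1)=\Upsilon_p(1)=0$), the first summand gives $\sigma(t)^{\iib+1}\Psi_{\ib}(t/\sigma(t))$, the second gives $\sigma(t)^{\iib-1}\bigl[(2\log\sigma(t)-\log2)\,\Psi_{\ib-1}(t/\sigma(t))+\Upsilon_{\ib-1}(t/\sigma(t))\bigr]$, and the third gives $\sigma(t)^{\iib-1}\Psi_{\ib-1}(t/\sigma(t))$. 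The one structural point is that, on expanding $\Psi_{\ib-1}$ and $\Upsilon_{\ib-1}$ by Lemmas~\ref{L:Psi} and~\ref{L:Upsilon}, the $t^{\iib-1}\log\sigma(t)$ produced by the factor $2\log\sigma(t)\,\Psi_{\ib-1}(t/\sigma(t))$ cancels exactly the $\log\sigma(t)$ coming from $\Upsilon_{\ib-1}(t/\sigma(t))=\tfrac{2\beta}{2-\beta}(t/\sigma(t))^{\iib-1}\bigl(\log t-\log\sigma(t)\bigr)+\cdots$; what survives involves only $t^{\iib-1}\log t$, $t^{\iib-1}$ and smaller quantities.

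Assembling the three summands and inserting the asymptotics of $\sigma(t)^2$, the leading term comes from the first summand alone and equals $2^{-\ib}\tfrac{\beta}{2+\beta}t^{\iib+1}$ (using $\Psi_{\ib}(x)=\tfrac{1}{2/\beta+1}x^{\iib+1}+\cdots$ and $\tfrac{1}{2/\beta+1}=\tfrac{\beta}{2+\beta}$). For $\beta\in(0,2)$ the first summand also produces $-2^{-\ib}\tfrac{\sigma(t)^2}{2-\beta}t^{\iib-1}=-2^{-\ib}\tfrac{2}{\beta}t^{\iib-1}\log t+\cdots$ (since $\sigma(t)^2\sim\tfrac{2(2-\beta)}{\beta}\log t$), while the second summand, after the cancellation above, produces $2^{-\ib}\tfrac{4(1-\beta)}{\beta(2-\beta)}t^{\iib-1}\log t+\cdots$; adding these gives $-2^{-\ib}\tfrac{2}{2-\beta}t^{\iib-1}\log t$, the remaining explicit contribution being a constant $c_{\beta,N}$ (collecting $\log c_\beta$, $\log N\beta$, $\log2$ and the constants of Lemmas~\ref{L:Psi}--\ref{L:Upsilon}) times $2^{-\ib}t^{\iib-1}$, and everything else — notably the $O(\sigma(t)^{\iib+1})$ terms, which are mere powers of $\log t$ — of lower order. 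For $\beta=2$ one has $\ib=\tfrac12$ and $\ib-1=-\tfrac12$, so $\Psi_{1/2}(x)=\tfrac12x^2-\tfrac12\log x+\cdots$, $\Psi_{-1/2}(x)=\log x+\cdots$ and $\Upsilon_{-1/2}(x)=(\log x)^2+\cdots$; the same computation with $\sigma(t)^2=2\log\log t+O(1)$ makes the first summand contribute $2^{-1/2}\bigl(\tfrac12t^2-(\log t)(\log\log t)\bigr)+\cdots$ and the second summand contribute $-2^{-1/2}\cdot\tfrac12(\log t)^2+\cdots$, which together give the claimed expansion.

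I expect the difficulty to be bookkeeping rather than conceptual. One must (i) verify that the remainder in the $b^{-1}$‑expansion, after integration over $(\tau_L(t),t)$, is genuinely $o(t^{\iib-1})$ (respectively $o((\log t)(\log\log t))$), which follows by taking $L$ large and dominating by the integral of $(\tau^2-\sigma(t)^2)^{\ib-1}$, itself of order $t^{\iib-1}$; (ii) carry out the $\Psi$‑ and $\Upsilon$‑expansions in the several regimes according to whether $\ib$ and $\ib-1$ fall below $-\tfrac12$, equal $\pm\tfrac12$ or $\tfrac32$, or lie strictly between these breakpoints of Lemmas~\ref{L:Psi} and~\ref{L:Upsilon}, checking that the outcome is the single uniform formula above (at the breakpoints the extra logarithmic terms are of strictly lower order); and (iii) treat $\beta=2$ with care, since there $\sigma(t)$ grows only like $\sqrt{\log\log t}$ and the term hierarchy is governed by powers of $\log t$ and $\log\log t$ rather than of $t$. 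The only genuinely structural ingredient is the exact cancellation of the $\log\sigma(t)$ contributions noted above, without which the corrections would not assemble into the clean form claimed.
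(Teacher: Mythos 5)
Your plan is correct and follows essentially the same route as the paper's proof: reduce to the integral representation of Lemma~\ref{L:I-norm-general}, use Lemma~\ref{L:lambda-t-asymp} for $\sigma(t)$, split off a negligible lower range, expand $b^{-1}$ via Lemma~\ref{L:a-invers} with an $\varepsilon$-controlled remainder, change variables to invoke Lemmas~\ref{L:Psi} and~\ref{L:Upsilon}, and exploit exactly the same cancellation of the $\log\sigma(t)$ terms before reassembling the coefficients (which you compute correctly in both regimes). The only cosmetic difference is your split point $\tau_L(t)$ (with lower limit $d=1$, admissible since $\tfrac1\beta-1>-1$) in place of the paper's $C\sigma(t)$ with $d=C>1$; this changes nothing substantive.
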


\begin{proof}
Fix
$\varepsilon\in(0,1)$. By~Lemma~\ref{L:a-invers}, there exists
$\tau_0>0$ such that
\begin{equation}\label{E:V-new}
	b^{-1}(\tau) \le (1+\varepsilon) (\log\tau)^\ib,
\end{equation}
\begin{equation}\label{E:b-invers-upper}
	b^{-1}(\tau)
		\le (\log \tau)^\ib
			+ \frac{1-\beta}{\beta^2} (\log \tau)^{\ib-1}\log\log \tau
			+ K_\varepsilon^+ (\log \tau)^{\ib-1}
\end{equation}	
and
\begin{equation}\label{E:b-invers-lower}
	b^{-1}(\tau)
		\ge (\log \tau)^\ib
			+ \frac{1-\beta}{\beta^2} (\log \tau)^{\ib-1}\log\log \tau
			+ K_\varepsilon^- (\log \tau)^{\ib-1}
\end{equation}
for $\tau>\tau_0$, where
\begin{equation*}
	K_\varepsilon^+ =-\frac{\log\beta N}{\beta} +\varepsilon
	\quad\text{and}\quad
	K_\varepsilon^- = - \frac{\log\beta N}{\beta} - \varepsilon.
\end{equation*}
Lemma~\ref{L:lambda-t-asymp} tells us that  the function $t\mapsto\lambda_t$, $t>0$,
defined in Lemma~\ref{L:I-norm-general}, decreases to zero
as $t\to\infty$.  Hence, there exists $t_0>0$ such that $\lambda_t<1$ for
$t>t_0$. Let  $\sigma(t)$ be the function defined  by~\eqref{E:def-of-tau} for
$t>t_0$. Then $\sigma(t)$ is increasing on $(t_0,\infty)$ and
\begin{equation}\label{E:lambda-in-terms-of-tau}
	\lambda_t = e^{-\frac{\sigma(t)^2}{2}}
		\quad\text{for $t>t_0$.}
\end{equation}
Lemma~\ref{L:lambda-t-asymp} also tells us that
\begin{equation}\label{E:estimate-tau-less-than-2}
	\sigma(t)=
		\begin{cases}
			\sqrt{2\left(\frac{2}{\beta}-1\right)\log t} + \cdots
				& \text{if $\beta\in(0,2)$}
				\\
			\sqrt{2\log\log t} + \cdots
				& \text{if $\beta=2$}
		\end{cases}
	\quad\text{as $t\to\infty$}.
\end{equation}
Consequently,
\begin{equation}\label{E:tau-decay}
	\lim_{t \to \infty}\frac{t}{\sigma(t)}=\infty\,.
\end{equation}
Next, choose $C>1$ fulfilling the inequality
\begin{equation}\label{E:definition-of-C}
	e^{(C^2-1)\frac{\sigma(t)^2}{2}} > \tau_0
		\quad\text{for $t>t_0$}.
\end{equation}
Thanks to~\eqref{E:tau-decay}, there exists  $t_1>t_0$
such that $t>C\sigma(t)$ for $t>t_1$. On setting
\begin{equation}\label{E:definition-of-I1}
	I_1(t) = \int_0^{C\sigma(t)}
		b^{-1}\Bigl(\lambda_t e^{\frac{\tau^2}{2}}\Bigr)\,\d \tau
\end{equation}
and
\begin{equation}\label{E:definition-of-I2}
	I_2(t) = \int_{C\sigma(t)}^t
		b^{-1}\Bigl(\lambda_t e^{\frac{\tau^2}{2}}\Bigr)\,\d \tau
\end{equation}
for $t>t_1$, equation  \eqref{E:I-norm-general} can be rewritten as
\begin{equation}\label{E:splitting-of-Is}
	\normIB
		= I_1(t) + I_2(t)
		\quad\text{for $t>t_1$}.
\end{equation}
We begin with an estimate for $I_1$. By~\eqref{E:definition-of-I1}
and the monotonicity of $b^{-1}$,
\begin{equation*}
	I_1(t)
		\le C \sigma(t)\,b^{-1}\Bigl(\lambda_t\,e^{\frac{C^2\sigma(t)^2}{2}}\Bigr)
		\quad\text{for $t>t_1$}.
\end{equation*}
By equation \eqref{E:lambda-in-terms-of-tau}, this inequality ensures that
\begin{equation}\label{E:I1-intermediate}
	I_1(t)
		\le C\sigma(t)\,b^{-1}\Bigl(e^{(C^2-1)\frac{\sigma(t)^2}{2}}\Bigr)	
		\quad\text{for $t>t_1$}.
\end{equation}
Thus, by~\eqref{E:I1-intermediate}, \eqref{E:definition-of-C} and ~\eqref{E:V-new},
\begin{equation}\label{E:new2}
	I_1(t) \le K \sigma(t)^{\iib+1}
		\quad\text{for $t>t_1$},
\end{equation}
where
\begin{equation*}
	K = (1+\varepsilon)C\left(\frac{C^2-1}{2}\right)^\ib.
\end{equation*}
Therefore, from ~\eqref{E:estimate-tau-less-than-2} and~\eqref{E:new2} we obtain that
\begin{equation}\label{E:estimate-of-I1}
	0 \le I_1(t) \le K 2^{\frac1{\beta}+\frac12}
		\begin{cases}
			\left(\left(\frac{2}{\beta}-1\right)\log t\right)^{\frac1{\beta}+\frac12}
				+\cdots
				&\text{if $\beta\in(0,2)$}
				\\
			\log\log t + \cdots
				&\text{if $\beta=2$}
		\end{cases}
	\quad\text{as $t\to\infty$.}
\end{equation}
We  now deal with $I_2$. Equations ~\eqref{E:lambda-in-terms-of-tau}
and~\eqref{E:definition-of-I2} entail that $I_2$ can be expressed as
\begin{equation} %\label{E:haf}
	I_2(t) = \int_{C\sigma(t)}^t
		b^{-1}\Bigl(e^{\frac{\tau^2}{2}-\frac{\sigma(t)^2}{2}}\Bigr)\,\d \tau
		\quad\text{for $t>t_1$.}
\end{equation}
From inequality ~\eqref{E:definition-of-C} one has that
\begin{equation} %\label{E:pes}
	e^{\frac{\tau^2}{2}-\frac{\sigma(t)^2}{2}} > \tau_0
		\quad\text{for $\tau>C\sigma (t)$, }
\end{equation}
provided that $t>t_1$.
Set
\begin{equation} %\label{E:def-of-I21}
	I_{21}(t)
		= \int_{C\sigma(t)}^t \left(\frac{\tau^2}{2}-\frac{\sigma(t)^2}{2}\right)^{\ib}\d\tau,
\end{equation}
\begin{equation} %\label{E:def-of-I22}
	I_{22}(t)
		= \int_{C\sigma(t)}^t\left(\frac{\tau^2}{2}-\frac{\sigma(t)^2}{2}\right)^{\ib-1}
			\log\left(\frac{\tau^2}{2}-\frac{\sigma(t)^2}{2}\right)\d \tau
\end{equation}
and
\begin{equation} %\label{E:def-of-I23}
	I_{23}(t)
		= \int_{C\sigma(t)}^t\left(\frac{\tau^2}{2}-\frac{\sigma(t)^2}{2}\right)^{\ib-1}\d \tau,
\end{equation}
for $t>t_1$. By inequality \eqref{E:b-invers-upper},
\begin{align}\label{dec15}
	\begin{split}
		I_{2}(t)
			\le I_{21}(t)
				+ \frac{1-\beta}{\beta^2} I_{22}(t)
				+ K_{\varepsilon}^{+}I_{23}(t)
	\end{split}
\end{align}
and, by \eqref{E:b-invers-lower}
\begin{align}\label{dec16}
	\begin{split}
		I_{2}(t)
			\ge I_{21}(t)
				+ \frac{1-\beta}{\beta^2}I_{22}(t)
				+ K_{\varepsilon}^{-}I_{23}(t)
	\end{split}
\end{align}
for $t>t_1$.

Let us focus  on $I_{21}$. By a change of variables,
\begin{align} %\label{E:I-21-chg-var}
	\begin{split}
		I_{21}(t)
			& = \int_{C}^{\frac{t}{\sigma(t)}}
				\left( \frac{\sigma(t)^2}{2} r^2 - \frac{\sigma(t)^2}{2} \right)^\ib\!\sigma(t)\,\d r
				= 2^{-\ib} \sigma(t)^{\iib+1}
				\int_{C}^{\frac{t}{\sigma(t)}} (r^2-1)^\ib \,\d r
				\\
			& = 2^{-\ib} \sigma(t)^{\iib+1}
				\Psi_{\ib}\left( \frac{t}{\sigma(t)} \right)
				\quad\text{for $t>t_1$},
	\end{split}
\end{align}
where $\Psi_{\ib}$ is defined as in  \eqref{E:Psi}, with $\lowerlimit=C$.
Thanks to equation \eqref{E:tau-decay}, we may use Lemma~\ref{L:Psi} to infer
what follows: if $\beta\in(0,2)$, then
\begin{align*}
	\Psi_{\frac1\beta}\left( \frac{t}{\sigma(t)} \right)
		 = \frac{\beta}{2+\beta} \left(\frac{t}{\sigma(t)} \right)^{\iib+1}
			- \dfrac{1}{2-\beta} \left(\dfrac{t}{\sigma(t)} \right)^{\iib-1}
			+
			\begin{cases}
			\frac{1}{2\beta}\frac{1-\beta}{2-3\beta}
				\left(\frac{t}{\sigma(t)} \right)^{\iib-3} + \cdots
				& \text{if $\beta\in(0,\frac{2}{3})$}
				\\
			\frac{3}{8}\log\frac{t}{\sigma(t)} + \cdots
				& \text{if $\beta=\frac{2}{3}$}
				\\
			c + \cdots
				& \text{if $\beta\in(\frac{2}{3},2)$}
			\end{cases}
\end{align*}
as $t\to\infty$, whence
\begin{align} %\label{E:I-21-upper-bound-beta-not-2}
	\begin{split}
		I_{21}(t)
			 = 2^{-\ib} \frac{\beta}{2+\beta} t^{\iib+1}
				-	2^{-\ib}\frac{1}{2-\beta}t^{\iib-1}\sigma(t)^{2}
		 + 2^{-\ib}
					\begin{cases}
						\frac{1}{2\beta}\frac{1-\beta}{2-3\beta}\, t^{\iib-3}\sigma(t)^{4} + \cdots
						& \text{if $\beta\in(0,\frac{2}{3})$}
									\\
								\frac{3}{8}\sigma(t)^{\iib+1} \log t + \cdots
						& \text{if $\beta=\frac{2}{3}$}
									\\
						c \sigma(t)^{\iib+1} + \cdots
						& \text{if $\beta\in(\frac{2}{3},2)$}
					\end{cases}
	\end{split}
\end{align}
as $t\to\infty$; if $\beta=2$, then
\begin{equation*}
	\Psi_{\frac1\beta}\left( \frac{t}{\sigma(t)} \right)
		= \frac{1}{2}\left( \frac{t}{\sigma(t)} \right)^2-\frac{1}{2}\log\frac{t}{\sigma(t)}
			+ c + \cdots
		\quad\text{as $t\to\infty$},
\end{equation*}
whence
\begin{equation*}
	I_{21}(t)
		= \frac{1}{2\sqrt{2}}t^2
		- \frac{1}{2\sqrt{2}}\sigma(t)^2\log t
		+ \frac{1}{2\sqrt{2}}\sigma(t)^2\log \sigma(t)
		+ \cdots
	\quad\text{as $t\to\infty$.}
\end{equation*}
Also, coupling equation ~\eqref{E:def-of-tau} with Lemma~\ref{L:lambda-t-asymp}
enables us to deduce that
\begin{equation*}
	\sigma(t)^2=
		\begin{cases}
			2\left(\frac{2}{\beta}-1\right)\log t - 2\log c_{\beta} + \cdots
				& \text{if $\beta\in(0,2)$}
					\\
			2\log\log t - 2 \log c_2 + \cdots
				& \text{if $\beta=2$}
		\end{cases}
	\quad\text{as $t\to\infty$},
\end{equation*}
whence
\begin{equation}\label{E:I21-final}
I_{21}(t)
	= 2^{-\ib} \frac{\beta}{2+\beta} t^{\iib+1}
		+	2^{-\ib}
		\begin{cases}
			- \iib t^{\iib-1}\log t
			+ \frac{2}{2-\beta} \log c_{\beta} t^{\iib-1} + \cdots
					& \text{if $\beta\in(0,2)$}
						\\
			- \log t\log\log t
			+  \log c_2 \log t + \cdots
					& \text{if $\beta=2$}
		\end{cases}
		%\quad\text{as $t\to\infty$}.
\end{equation}
as $t\to\infty$.
Let us next deal with $I_{22}(t)$.
We have that
\begin{align} \label{E:I-22}
	\begin{split}
	I_{22}(t)
		& = \int_{C\sigma(t)}^{t} \left(\frac{\tau^2}{2}-\frac{\sigma(t)^2}{2}\right)^{\ib-1}
			 \log\left(\frac{\tau^2}{2}-\frac{\sigma(t)^2}{2}\right)\d \tau
			\\
		& = 2^{1-\ib}\sigma(t)^{\iib-1}
			\left[
				\log\frac{\sigma(t)^2}{2}\,\Psi_{\ib-1}\left(\frac{t}{\sigma(t)}\right)
				+ \Upsilon_{\ib-1}\left( \frac{t}{\sigma(t)}\right)
			\right]
		 = I_{221}(t) + I_{222}(t)
	\end{split}
\end{align}
for $t>t_1$, where the functions $\Psi_{\ib-1}$ and $\Upsilon_{\ib-1}$ are
defined as in \eqref{E:Psi} and \eqref{E:Upsilon}, with  $d=C$, and  where we
have set
\begin{equation*}
	I_{221}(t)
		= 2^{1-\ib}\sigma(t)^{\iib-1}\log\frac{\sigma(t)^2}{2}
			\Psi_{\ib-1}\left(\frac{t}{\sigma(t)}\right),
\end{equation*}
and
\begin{equation*}
	I_{222}(t)
		= 2^{1-\ib}\sigma(t)^{\iib-1}
		\Upsilon_{\ib-1}\left(\frac{t}{\sigma(t)}\right)
\end{equation*}
for $t>t_1$.
As far as  $I_{221}$ is concerned, thanks to Lemma~\ref{L:Psi},
\begin{equation*}
	\Psi_{\ib-1}\left( \frac{t}{\sigma(t)}\right)
		= \frac{\beta}{2-\beta}\left( \frac{t}{\sigma(t)}\right)^{\iib-1}
			- \begin{cases}
		\frac{\beta-1}{3\beta-2}\left(\frac{t}{\sigma(t)}\right)^{\iib-3}
			+ \cdots &\text{if $\beta\in\left(0,\frac23\right)$}
				\\
		\frac12\log \frac{t}{\sigma\left(t\right)}
		 +\cdots
			& \text{if $\beta=\frac23$}
			\\
			c+\cdots
			& \text{if $\beta\in\left(\frac23,2\right)$}
		\end{cases}
			\quad\text{as $t\to\infty$.}
\end{equation*}
Hence, if  $\beta\in(0,2)$, then
\begin{equation*}
	\sigma(t)^{\iib-1}\Psi_{\ib-1}\left(\frac{t}{\sigma(t)}\right)
		= \frac{\beta}{2-\beta}t^{\iib-1}
		- \begin{cases}
				\frac{\beta-1}{3\beta-2}t^{\iib-3}\sigma(t)^2
					+ \cdots &\text{if $\beta\in\left(0,\frac23\right)$}
					\\
				\frac12 \log t
					+\cdots
					& \text{if $\beta=\frac23$}
					\\
				c + \cdots
					& \text{if $\beta\in\left(\frac23,2\right)$}
			\end{cases}
				\quad\text{as $t\to\infty$},
\end{equation*}
and, if $\beta=2$, then
\begin{equation*}
	\Psi_{-\frac{1}{2}}\left(\frac{t}{\sigma(t)}\right)
		= \log t - \log \sigma(t) + c + \cdots
		\quad\text{as $t\to\infty$}.
\end{equation*}
Note that $\sigma(t)^{\iib-1}=1$ in the latter case. Equation
~\eqref{E:def-of-tau} and Lemma~\ref{L:lambda-t-asymp} imply that
\begin{equation*}
	\log\frac{\sigma(t)^2}{2} =
			\begin{cases}
				\log\log t + \log \left(\iib-1\right) + \cdots
					& \text{if $\beta\in(0,2)$}
						\\[\medskipamount]
				\log\log\log t - \frac{\log c_2}{\log\log t} + \cdots
					& \text{if $\beta=2$}
			\end{cases}
				\quad\text{as $t\to\infty$}.
\end{equation*}
Furthermore, if  $\beta=2$, then
\begin{equation*}
	\log\sigma(t)
		= \frac{1}{2}\log\log\log t + \frac{1}{2} \log 2 + \cdots
	\quad\text{as $t\to\infty$.}
\end{equation*}
Therefore,
\begin{equation}\label{dec2}
	I_{221}(t) =
			\begin{cases}
				2^{-\ib}\frac{2\beta}{2-\beta} t^{\iib-1}\log\log t
					+ 2^{-\ib}\frac{2\beta}{2-\beta} t^{\iib-1}\log\left(\iib-1\right) + \cdots
					& \text{if $\beta\in(0,2)$}
						\\
				\sqrt{2}\log t\log\log\log t - \frac{\sqrt{2}}{2}\left(\log\log\log t\right)^2 + \cdots
					& \text{if $\beta=2$}
			\end{cases}
\end{equation}
as $t\to\infty$.
The behaviour of the term $I_{222}(t)$ can be determined as follows. Owing to
Lemma~\ref{L:Upsilon},
\begin{equation*}
	\Upsilon_{\ib-1}\left( \frac{t}{\sigma(t)}\right) =
			\begin{cases}
				\frac{2\beta}{2-\beta}\left(\frac{t}{\sigma(t)}\right)^{\iib-1}
				\log\frac{t}{\sigma(t)} - \frac{2\beta^2}{(2-\beta)^2}\left(\frac{t}{\sigma(t)}\right)^{\iib-1} +  \cdots
				& \text{if $\beta\in(0,2)$}
					\\
				\left(\log \frac{t}{\sigma(t)}\right)^2 + \cdots
				& \text{if $\beta=2$}
 			\end{cases}
		\quad\text{as $t\to\infty$.}
\end{equation*}
As a consequence, if $\beta\in(0,2)$, then
\begin{equation*}
	I_{222}(t)
		= 2^{-\ib}\frac{4\beta}{2-\beta}t^{\iib-1}\log t
			- 2^{-\ib}\frac{4\beta}{2-\beta}t^{\iib-1}\log \sigma(t)
			- 2^{-\ib}\frac{4\beta^2}{(2-\beta)^2}t^{\iib-1} + \cdots
		\quad\text{as $t\to\infty$,}
\end{equation*}
whereas, if $\beta=2$, then
\begin{equation*}
	I_{222}(t)
		= \sqrt{2}(\log t)^2-2\sqrt{2}\log t\log\sigma(t) + \cdots
			\quad\text{as $t\to\infty$.}
\end{equation*}
Inasmuch as
\begin{equation*}
	\log\sigma(t) =
			\begin{cases}
				\frac{1}{2}\log\log t + \frac{1}{2}\log\left(2\left(\iib-1\right)\right) + \cdots
					& \text{if $\beta\in(0,2)$}
						\\
				\frac{1}{2}\log\log\log t + \frac{1}{2}\log 2 + \cdots
					& \text{if $\beta=2$}
			\end{cases}
		\quad\text{as $t\to\infty$},
\end{equation*}
we conclude that
\begin{equation}\label{dec3}
	I_{222}(t) =
			\begin{cases}
				2^{-\ib}\frac{4\beta}{2-\beta}t^{\iib-1}\log t
					- 2^{-\ib}\frac{2\beta}{2-\beta} t^{\iib-1}\log \log t
					&	\\[\medskipamount]
				\quad - 2^{-\ib}\frac{2\beta}{2-\beta}t^{\iib-1}
					\left[\log\left(\iib-1\right)+\log 2+\frac{2\beta}{2-\beta}\right] + \cdots
					& \text{if $\beta\in(0,2)$}
						\\[\medskipamount]
				\sqrt{2}(\log t)^2-\sqrt{2}\log t\log\log\log t - \sqrt{2}\log 2\log t + \cdots
					& \text{if $\beta=2$}
			\end{cases}
		\quad\text{as $t\to\infty$}.
\end{equation}
Combining equations \eqref{E:I-22}, \eqref{dec2} and \eqref{dec3} tells us that
\begin{equation}\label{E:I22-final}
	I_{22}(t) =
		\begin{cases}
				2^{-\ib} \frac{4\beta}{2-\beta} t^{\iib-1}\log t
				- 2^{-\ib}\frac{2\beta}{2-\beta}
				\left(\frac{2\beta}{2-\beta} + \log 2\right) t^{\iib-1} + \cdots 				
					& \text{if $\beta\in(0,2)$}
						\\
				\sqrt{2}(\log t)^2
				- \sqrt{2}\log 2\log t + \cdots
					& \text{if $\beta=2$}
		\end{cases}
	\quad\text{as $t\to\infty$}.
\end{equation}
We finally turn our attention on the term $I_{23}(t)$.  Since
\begin{equation*}
	I_{23}(t)
	 = \int_{C\sigma(t)}^t\left(\frac{\tau^2}{2}-\frac{\sigma(t)^2}{2}\right)^{\ib-1}\d \tau
	 = 2^{1-\ib}\sigma(t)^{\iib-1}\Psi_{\ib-1}\left(\frac{t}{\sigma(t)}\right)
		 \quad\text{for $t>t_1$,}
\end{equation*}
from  Lemma~\ref{L:Psi} one can infer that
\begin{equation*}
	I_{23}(t)=2^{1-\ib}\sigma(t)^{\iib-1}
			\begin{cases}
				\frac{\beta}{2-\beta}\left(\frac{t}{\sigma(t)}\right)^{\iib-1} + \cdots
					& \text{if $\beta\in(0,2)$}
						\\
				\log\frac{t}{\sigma(t)} + \cdots
					& \text{if $\beta=2$}
			\end{cases}
				\quad\text{as $t\to\infty$.}
\end{equation*}
Hence,
\begin{equation}\label{E:I23-final}
	I_{23}(t) = \begin{cases}
					2^{-\ib} \frac{2\beta}{2-\beta} t^{\iib-1} + \cdots 				
						& \text{if $\beta\in(0,2)$}
							\\
					\sqrt{2}\log t + \cdots
						& \text{if $\beta=2$}
				\end{cases}
					\quad\text{as $t\to\infty$.}
\end{equation}
Altogether, if $\beta \in (0,2)$, then,   by
\eqref{dec15},~\eqref{E:I21-final},~\eqref{E:I22-final} and~\eqref{E:I23-final},
\begin{align} %\label{E:summary-for-I2-upper}
	\begin{split}
		I_{2}(t)
			& \le
			2^{-\ib} \frac{\beta}{2+\beta} t^{\iib+1}
				- 2^{-\ib}t^{\iib-1}\log t\left(\iib-\frac{1-\beta}{\beta^2}\frac{4\beta}{2-\beta}\right)
						\\
			& \quad + 2^{-\ib}\frac{2}{2-\beta}t^{\iib-1}
				\left[\log c_{\beta}-\frac{1-\beta}{\beta^2}\left(\frac{2\beta}{2-\beta}+\log 2\right) + \beta K^{+}_{\varepsilon}\right] +\cdots 		
					\quad\text{as $t\to\infty$,}
	\end{split}
\end{align}
and, by making use of \eqref{dec16} instead of \eqref{dec15},
\begin{align} %\label{E:summary-for-I2-lower}
	\begin{split}
		I_{2}(t)
			& \ge  2^{-\ib} \frac{\beta}{2+\beta} t^{\iib+1}
				- 2^{-\ib}t^{\iib-1}\log t\left(\iib-\frac{1-\beta}{\beta^2}\frac{4\beta}{2-\beta}\right)
						\\
			& \quad + 2^{-\ib}\frac{2}{2-\beta}t^{\iib-1}
				\left[\log c_{\beta}-\frac{1-\beta}{\beta^2}\left(\frac{2\beta}{2-\beta}+\log 2\right)  + \beta K^{-}_{\varepsilon}\right] +\cdots
					\quad\text{as $t\to\infty$}.
	\end{split}
\end{align}
Therefore, owing to the arbitrariness  of $\varepsilon$,
\begin{align}\label{E:summary-for-I2-total}
	\begin{split}
		I_{2}(t)
			& = 2^{-\ib} \frac{\beta}{2+\beta} t^{\iib+1}
				- 2^{-\ib}\frac{2}{2-\beta}t^{\iib-1}\log t
				+ 2^{-\ib} c_{\beta,N}t^{\iib-1} + \cdots
		        			\quad\text{as $t\to\infty$}
	\end{split}
\end{align}
for a suitable constant $c_{\beta,N} \in \R$.

Similarly, if $\beta=2$, then
\begin{align}\label{E:summary-for-I2-upper-beta-2}
\begin{split}
	I_{2}(t)
		& = \frac{1}{2\sqrt{2}} t^2
				- \frac{1}{\sqrt{2}} \log t\log\log t
				+ \frac{1}{\sqrt{2}} \log c_2\log t
				+ \cdots
					\\
			& \quad - \frac{1}{4}\left(\sqrt{2}(\log t)^2
				- \sqrt{2}\log 2\log t + \cdots\right)
				- \frac{\log(2N)}{2}\sqrt{2}\log t
				+ \cdots
					\\
		& = \frac{1}{2\sqrt{2}}t^2-\frac{\sqrt{2}}{4}(\log t)^2
				- \frac{1}{\sqrt{2}}\log t\log\log t
				+ \cdots
	\quad\text{as $t\to\infty$}.
\end{split}
\end{align}
Owing to equation ~\eqref{E:estimate-of-I1},  formulas
\eqref{E:summary-for-I2-total} and \eqref{E:summary-for-I2-upper-beta-2}
continue to hold with $I_2(t)$ replaced by $I_1(t) + I_2(t)$ on the left-hand
side. Hence, if $\beta\in(0,2)$, then by~\eqref{E:splitting-of-Is},
\begin{equation*}
\normIB = 2^{-\ib} \frac{\beta}{2+\beta} t^{\iib+1}
				- 2^{-\ib}\frac{2}{2-\beta}t^{\iib-1}\log t
				+ 2^{-\ib} c_{\beta,N}t^{\iib-1} + \cdots
		        			\quad\text{as $t\to\infty$,}
\end{equation*}
and, if $\beta=2$, then
\begin{equation*}
	\normIB = \frac{1}{2\sqrt{2}}t^2 - \frac{\sqrt{2}}{4}(\log t)^2 - \frac{1}{\sqrt{2}}\log t\log\log t+\cdots
		\quad\text{as $t\to\infty$.}
\end{equation*}
The proof is complete.
\end{proof}

Lemma~\ref{L:I-norm-sharp}, coupled with equation \eqref{E:power2}, immediately
implies the following result.

\begin{lemma} \label{L:F-B-expansion}
Let $B$ be a Young function of the form \eqref{BN} for some $\beta \in (0,2]$
and $N>0$. Let $\FF_B$ be the function defined by \eqref{E:F-LB}, and let
$\kappab$ be the constant given by  \eqref{E:kappab}.  Then
\begin{equation*}
	\bigl[ \kappab \FF_\beta(t) \bigr]^\frac{2\beta}{2+\beta}
		= \frac{t^2}{2} +
		\begin{cases}
			- \frac{2}{2-\beta}\log t
			+ c_{\beta,N} + \cdots
				& \text{if $\beta\in(0,2)$}
				\\
			- \frac{1}{2}(\log t)^2
			- \log t \log\log t
			+ \cdots
				& \text{if $\beta=2$}
		\end{cases}
		\quad\text{as $t\to\infty$},
\end{equation*}
where $c_{\beta,N}$ is the constant appearing in Lemma~\ref{L:I-norm-sharp}.
\end{lemma}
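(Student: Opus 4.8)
The plan is to combine the definition \eqref{E:F-LB} of $\FF_B$ with the asymptotic expansion of $\normIB$ furnished by Lemma~\ref{L:I-norm-sharp}, and then to raise the outcome to the power $\sigma:=\frac{2\beta}{2+\beta}$ by means of the general expansion \eqref{E:power2}. First one observes that the second summand $\frac{\sqrt{2\pi}}{2}B^{-1}(1)$ in \eqref{E:F-LB} is a constant, hence of lower order than the last term retained in the expansion of $\normIB$ in Lemma~\ref{L:I-norm-sharp} (namely $t^{\iib-1}$ if $\beta\in(0,2)$, and $\log t\log\log t$ if $\beta=2$). Therefore $\FF_B$ has exactly the same expansion as $\normIB$, so that in the notation \eqref{dic30} one may take, when $\beta\in(0,2)$,
\begin{equation*}
	\EE_1(t)=2^{-\ib}\tfrac{\beta}{2+\beta}t^{\iib+1},\quad
	\EE_2(t)=-2^{-\ib}\tfrac{2}{2-\beta}t^{\iib-1}\log t,\quad
	\EE_3(t)=2^{-\ib}c_{\beta,N}t^{\iib-1}.
\end{equation*}
The case $\beta=2$ is immediate: here $\sigma=1$ and, by \eqref{E:kappab}, $\kappab=\sqrt{2}$, so that $\bigl[\kappab\FF_B(t)\bigr]^\sigma=\sqrt{2}\,\FF_B(t)$, and multiplying the $\beta=2$ expansion of $\normIB$ in Lemma~\ref{L:I-norm-sharp} by $\sqrt{2}$ gives the claim.

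For $\beta\in(0,2)$ I would apply \eqref{E:power2} to $\FF_B(t)^\sigma$ and multiply by $\kappab^\sigma$, checking three points. The leading term is $\kappab^\sigma\EE_1(t)^\sigma$; since $\sigma(\iib+1)=2$ this is a multiple of $t^2$, and the elementary identity $\kappab\cdot 2^{-\ib}\tfrac{\beta}{2+\beta}=2^{-\frac12-\ib}$, together with $\sigma(\tfrac12+\ib)=1$, shows that the coefficient equals exactly $\tfrac12$. The second term $\kappab^\sigma\sigma\,\EE_1(t)^{\sigma-1}\EE_2(t)$ has its power of $t$ cancelled, since $\EE_1^{\sigma-1}\propto t^{1-\iib}$ while $\EE_2\propto t^{\iib-1}\log t$, and a short computation exploiting $\kappab^\sigma\EE_1(t)^\sigma=\tfrac12 t^2$ produces the coefficient $-\tfrac{2}{2-\beta}$, \ie the term $-\tfrac{2}{2-\beta}\log t$. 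Finally, among the remaining contributions the term $\kappab^\sigma\sigma\,\EE_1(t)^{\sigma-1}\EE_3(t)$ is once more a genuine constant (its $t$-powers cancel), and in fact reduces precisely to $c_{\beta,N}$, while the further correction term in \eqref{E:power2}, of order $\EE_1^{\sigma-2}\EE_2^2=O\bigl(t^{-2}(\log t)^2\bigr)$, and every subsequent term (including those coming from the omitted tail of the expansion of $\FF_B$) are $o(1)$.

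The only substantive work is the bookkeeping of exponents — verifying that the powers of $t$ telescope as claimed and that the discarded terms genuinely tend to $0$ — together with the normalisation identity $\kappab^\sigma\bigl(2^{-\ib}\tfrac{\beta}{2+\beta}\bigr)^\sigma=\tfrac12$, which is exactly what forces the leading coefficient to be $\tfrac12$; all the rest is direct substitution.
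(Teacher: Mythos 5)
Your proposal is correct and follows essentially the same route as the paper, which simply observes that the result follows from Lemma~\ref{L:I-norm-sharp} combined with the expansion formula \eqref{E:power2}; you have merely written out the exponent bookkeeping (the identities $\sigma(\tfrac2\beta+1)=2$, $\kappab\, 2^{-\ib}\tfrac{\beta}{2+\beta}=2^{-\frac12-\ib}$, $\sigma(\tfrac12+\ib)=1$) and the harmless absorption of the constant $\tfrac{\sqrt{2\pi}}{2}B^{-1}(1)$, which the paper leaves implicit. No gaps.
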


We are now in a position to accomplish the proof of
Theorem~\ref{T:integral-form-improved}.

\begin{proof}[Proof of Theorem~\ref{T:integral-form-improved}]
Assume that $u$ is a weakly differentiable function  satisfying conditions
\eqref{E:nabla-integral} and \eqref{E:mu}. Then, by
Lemma~\ref{L:Holder-estimates}, there exists a Young function $B$ of the form
\eqref{BN} such that
\begin{equation} \label{may8}
	\int_{\rn}
		\exp^{\frac{2\beta}{2+\beta}}\left(\kappab |u| \right)
		\EF\left(|u|\right)\dgn
		\le \sqrt{\frac{2}{\pi}}
			\int_{0}^{\infty} e^{\left[ \kappab\FF_B(t) \right]^{\frac{2\beta}{2+\beta}}
				-\frac{t^2}{2}}
			\EF\left( \FF_B(t) \right)
				\d t\,.
\end{equation}
Thanks to Lemma~\ref{L:F-B-expansion},
\begin{equation} \label{E:F-B-power}
	\bigl[ \kappab \FF_B(t) \bigr]^\frac{2\beta}{2+\beta} - \frac{t^2}{2}
		= \begin{cases}
			- \frac{2}{2-\beta}\log t + c_{\beta,N} + \cdots
				& \text{if $\beta\in(0,2)$}
				\\
			- \frac{1}{2}(\log t)^2 -\log t\log\log t + \cdots
				& \text{if $\beta=2$}
		\end{cases}
		\quad\text{as $t\to\infty$.}
\end{equation}
Moreover,  from Lemma~\ref{L:I-norm-sharp} and the
definition of $\FF_B$, one can deduce that
\begin{equation} \label{E:F-B-upper}
	\FF_B(t)
		\le \mu_\beta\, t^{\iib+1} + \cdots
		\quad\text{near infinity,}
\end{equation}
where $\mu_\beta=2^{-\ib}\frac{\beta}{2+\beta}$.

Assume first that $\beta\in(0,2)$. The integral on the right hand side of \eqref{may8}
converges
if
\begin{equation} \label{may9}
	\int^\infty e^{-\frac{2}{2-\beta}\log t}\EF\left( \FF_B(t) \right)\d t
		< \infty.
\end{equation}
Since $\EF$ is an increasing function, equation \eqref{E:F-B-upper} implies that
\begin{equation*} %\label{E:phi-F-B-upper}
	\EF\left( \FF_B(t) \right)
		\le \EF\left( \mu_\beta\, t^{\iib+1} \right)
		\quad\text{for large $t$.}
\end{equation*}
Therefore,  equation \eqref{may9} holds provided that
\begin{equation*}
	\int^\infty e^{-\frac{\beta}{2-\beta}\log t}
		\EF\left( \mu_\beta\, t^{\iib+1} \right)\d t
		< \infty.
\end{equation*}
The convergence of the last integral follows from
assumption \eqref{E:improvement-conditions-beta-less-than-2}.

Assume next that $\beta=2$.
Owing  to equation \eqref{E:F-B-power}, given any $\delta>0$, one has that
\begin{equation*}
	\bigl[ \kappab \FF_B(t) \bigr]^\frac{2\beta}{2+\beta} - \frac{t^2}{2}
		\le - \frac{1}{2}(\log t)^2 + (\delta-1)\log t\log\log t
		\quad\text{for large $t$.}
\end{equation*}
Hence, in the light of inequality \eqref{E:F-B-upper}, the integral on
the right-hand side of \eqref{may8} converges if there exists $\delta >0$ such that
\begin{equation*}
	\int^{\infty}e^{-\frac{1}{2}(\log t)^2 + (\delta-1)\log t\log\log t}
		\EF\left(\mu_2\, t^2\right)\,\d t <\infty.
\end{equation*}
The last inequality follows from
assumption~\eqref{E:improvement-conditions-beta-2}, via the change of variables
$s=\mu_2\, t^2$.  The validity of inequality \eqref{E:sup-improved} is thus
established  for both $\beta\in(0,2)$ and $\beta =2$, under the respective
assumptions.

It remains to
prove the sharpness of these assumptions.
Let $\tau_0>0$ and let $f\colon(\tau_0,\infty)\to(0,\infty)$
be a locally integrable function. Define the function $u\colon\rn\to\R$ as
\begin{equation*}
	u(x) = \sgn x_1
	\begin{cases}
		\displaystyle
			0
			& \text{for $|x_1|<\tau_0$}
			\\
		\displaystyle
		\int_{\tau_0}^{|x_1|} f(\tau)\,\d \tau
			& \text{for $|x_1| \ge  \tau_0$}.
	\end{cases}
\end{equation*}
Then $u$ is weakly differentiable, $\med(u)=\mv(u)=0$
and
\begin{equation*}
	|\nabla u(x)|
	 =
	\begin{cases}
		0
			& \text{for $|x_1|<\tau_0$}
			\\
		f\bigl( |x_1| \bigr)
			& \text{for \ae $x$ such that $|x_1|>\tau_0$}.
	\end{cases}
\end{equation*}
Fix $t_0>0$ so large that  $\Exp^\beta(t)=e^{t^\beta}$ for $t>t_0$, and then
choose $\tau_0>0$ such that
\begin{equation*}
	\frac{e^{\frac{\tau^2}{2}}}{\tau(\log\tau)^{2}}
		\ge \Exp^{\beta}(t_0)
		\quad\text{for $\tau>\tau_0$}.
\end{equation*}
Set
\begin{equation} \label{E:f-def}
	f(\tau) = E^{-1}\left( \frac{e^{\frac{\tau^2}{2}}}{\tau(\log \tau)^{2}} \right)
		\quad\text{for $\tau>\tau_0$},
\end{equation}
where $E^{-1}$ denotes the inverse of the function $\Exp^\beta$ on $(t_0,\infty)$.
Thus,
\begin{equation*}
	\int_{\rn} \Exp^\beta(|\nabla u|)\dgn
		= 2\int_{0}^{\tau_0} \d\gamma_1
		+ \frac{2}{\sqrt{2\pi}} \int_{\tau_0}^{\infty}
			\Exp^\beta\bigl(f(\tau)\bigr) e^{-\frac{\tau^2}{2}}\,\d\tau
		\le 1 + \frac{2}{\sqrt{2\pi}} \int_{\tau_0}^{\infty}
			\frac{\d\tau}{\tau(\log\tau)^{2}}.
\end{equation*}
Since $M>1$ and the last integral converges, we may assume, on increasing $\tau_0$, if necessary, that
\begin{equation*}
	\int_{\rn} \Exp^\beta(|\nabla u|)\,\dgn \le M\,.
\end{equation*}
As far as the integral in \eqref{E:integral-improved-sharp} is concerned,
we have that
\begin{align} \label{E:integral-improved-f}
	\begin{split}
	& \int_{\rn} \exp^\frac{2\beta}{2+\beta}(\kappab|u|)
		\EF(|u|)\,\dgn
			\\
		&\qquad \ge \frac{2}{\sqrt{2\pi}} \int_{\tau_0}^{\infty}
			\exp\left\{
				\left( \kappab
					\int_{\tau_0}^{t} f(\tau)\,\d\tau
				\right)^\frac{2\beta}{2+\beta}
				- \frac{t^2}{2}
			\right\}
				\EF\left(
					\int_{\tau_0}^{t} f(\tau)\,\d\tau
					\right) \d t.
	\end{split}
\end{align}
Since $E^{-1}(\tau)=(\log \tau)^\ib$ if $\tau \geq \Exp^{\beta}(t_0)$, the
function $f$ defined by equation \eqref{E:f-def} takes the form
\begin{equation*}
	f(\tau)
		= \left( \log\frac{e^{\frac{\tau^2}{2}}}{\tau(\log \tau)^{2}} \right)^\ib
		= \left( \frac{\tau^2}{2} - \log\tau - 2\log\log\tau \right)^\ib
		\quad\text{for $\tau>\tau_0$.}
\end{equation*}
Routine computations resting upon L'H\^opital's rule then tell us that
\begin{equation*}
	\int_{\tau_0}^{t} f(\tau) \,\d \tau
		 =  2^{-\ib}\frac{\beta}{2+\beta} t^{\iib+1}
				+ 2^{-\ib}
				\begin{cases}
					- \frac{2}{2-\beta}t^{\iib-1}\log t
					- \frac{4}{2-\beta} t^{\ib-1} \log\log t
					+ \cdots
						& \text{if $\beta\in(0,2)$}
						\\
					- \frac{1}{2}(\log t)^2
					- 2\log t\log\log t
					+ \cdots
						& \text{if $\beta=2$}
				\end{cases}
\end{equation*}
as $t\to\infty$.
Consequently, by formula \eqref{E:power},
\begin{equation*}
	\left( \kappab
		\int_{\tau_0}^{t} f(\tau) \,\d \tau
	\right)^\frac{2\beta}{2+\beta}
		- \frac{t^2}{2} =
		\begin{cases}
			- \frac{2}{2-\beta}\log t
			- \frac{4}{2-\beta}\log\log t
			+ \cdots
				& \text{if $\beta\in(0,2)$}
				\\
			- \frac{1}{2}(\log t)^2
			- 2 \log t\log\log t
			+ \cdots
				& \text{if $\beta=2$}
		\end{cases}
		\quad\text{as $t\to\infty$}.
\end{equation*}
Now, there exists $\tau_1>\tau_0$ such that
\begin{equation} \label{sep10}
	\int_{\tau_0}^{t} f(\tau) \,\d \tau
		 \ge  \frac{\mu_\beta}{2} t^{\frac{2}{\beta}+1}
		\quad\text{for $t>\tau_1$},
\end{equation}
where we have set $\mu_\beta = 2^{-\ib} \frac{\beta}{2+\beta}$,
and, simultaneously,
\begin{equation} \label{sep11}
	\left( \kappab
		\int_{\tau_0}^{t} f(\tau) \,\d \tau
	\right)^\frac{2\beta}{2+\beta}
		- \frac{t^2}{2}
		\ge - g(t)
		\quad\text{for $t>\tau_1$},
\end{equation}
where we have set
\begin{equation*}
	g(t) =
		\begin{cases}
			\frac{2}{2-\beta}\log t
			+ \frac{8}{2-\beta}\log\log t 	
				& \text{if $\beta\in(0,2)$}
				\\
			\frac{1}{2}(\log t)^2
			+ 4 \log t\log\log t
				& \text{if $\beta=2$}
		\end{cases}
\end{equation*}
for large $t$.
Therefore, since $\EF$ is an increasing function, from inequality
\eqref{E:integral-improved-f} we can deduce that
\begin{equation} \label{E:integral-improved-fb}
	\int_{\rn} \exp^\frac{2\beta}{2+\beta}(\kappab|u|)
		\EF(|u|)\,\dgn
		\ge \frac{2}{\sqrt{2\pi}}
		\int_{\tau_1}^{\infty}
			e^{-g(t)}
				\EF\left(
					\frac{\mu_\beta}{2} t^{\iib+1}
					\right)\d t\,.
\end{equation}
Observe that, in deriving inequality \eqref{E:integral-improved-fb}, we have
exploited the lower bounds \eqref{sep10}, \eqref{sep11} and replaced the lower
limit of integration $\tau_0$ by $\tau_1$.

First, assume that $\beta\in(0,2)$. The integral on the right-hand side of
\eqref{E:integral-improved-fb} diverges if
\begin{equation}\label{dec40a}
	\int^\infty t^{-\frac{2}{2-\beta}}
		(\log t)^{-\frac{8}{2-\beta}}
		\EF\left( \frac{\mu_\beta}{2} t^{\iib+1} \right)\d t
		= \infty\,.
\end{equation}
By a change of variables, equation \eqref{dec40a} is  equivalent to
\begin{equation*}
	\int^\infty t^{-\frac{4}{4-\beta^2}}
		(\log t)^{-\frac{8}{2-\beta}}
		\EF(t)\,\d t
		= \infty.
\end{equation*}
Thus, equation \eqref{E:integral-improved-sharp} follows via
\eqref{E:integral-improved-fb}, by assumption
\eqref{E:improvement-sharpness-beta-less-than-2}.

Next, suppose that $\beta=2$. The integral on the right-hand side of
\eqref{E:integral-improved-fb} diverges if
\begin{equation}\label{dec41}
	\int^\infty
		e^{-\frac{1}{2}(\log t)^2 - 4\log t\log\log t}
		\EF\left( \frac{\mu_2}{2} t^2 \right)\d t
		= \infty\,.
\end{equation}
A change of variables again shows that equation \eqref{dec41} certainly holds
provided that
\begin{equation*}
	\int^\infty
		e^{-\frac{1}{8}(\log t)^2 - \frac{5}{2}\log t\log\log t}
		\EF(t)\,\d t
		= \infty.
\end{equation*}
Equation \eqref{E:integral-improved-sharp} now follows from a combination of
\eqref{E:integral-improved-fb} and \eqref{E:improvement-sharpness-beta-2}.
\end{proof}

\section{Proof of Theorem~\ref{T:maximizers-integral}}

Besides Theorem~\ref{T:integral-form-improved}, some classical results of
functional analysis come into play in our proof of
Theorem~\ref{T:maximizers-integral}. They are recalled below, in a form
suitable for our applications.

\begin{theoremalph}[Equi-integrability (de la Vall\'ee-Poussin)]
\label{TA:delaVallePoussin}
Let $(\RR,\nu)$ be a probability space. Then a sequence
$\{u_k\}\subset L^1(\RR,\nu)$ is equi-integrable
if and only if there exists a convex function $\Psi\colon [0, \infty) \to [0, \infty)$, satisfying $\lim_{t \to \infty} \Psi(t)/t=\infty$, such that
\begin{equation*} %\label{E:equi-integrability-condition}
	\sup_{k} \int_{\RR} \Psi(|u_k|)\,\d\nu < \infty\,.
\end{equation*}
\end{theoremalph}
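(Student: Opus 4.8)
The plan is to prove the two implications of this equi-integrability criterion separately, starting with the soft one.

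For the implication ``existence of such a $\Psi$ $\Rightarrow$ equi-integrability'', I would put $C := \sup_k \int_\RR \Psi(|u_k|)\,\d\nu < \infty$ and use that $\Psi$ is superlinear: given $\varepsilon>0$ there is a level $M_\varepsilon>0$ beyond which $t \le \frac{\varepsilon}{C+1}\Psi(t)$. Splitting each integral according to whether $|u_k|$ exceeds $M_\varepsilon$ yields, for every $\nu$-measurable $E\subset\RR$,
\begin{equation*}
	\int_E |u_k|\,\d\nu \le M_\varepsilon\,\nu(E) + \frac{\varepsilon}{C+1}\int_\RR \Psi(|u_k|)\,\d\nu \le M_\varepsilon\,\nu(E) + \varepsilon,
\end{equation*}
uniformly in $k$. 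Taking $E=\RR$ gives a uniform $L^1$ bound, and taking $E=\{|u_k|>M\}$, for which $\nu(E)\le (M_\varepsilon+\varepsilon)/M$, shows that $\sup_k \int_{\{|u_k|>M\}}|u_k|\,\d\nu \to 0$ as $M\to\infty$; this is the equi-integrability of $\{u_k\}$ on the probability space $(\RR,\nu)$.

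For the converse --- the substantive direction --- I would build $\Psi$ explicitly from truncation levels furnished by equi-integrability. For each $n\in\N$ I would select $c_n>0$ with $\sup_k \int_{\{|u_k|>c_n\}}|u_k|\,\d\nu \le 2^{-n}$, and then, after replacing $c_n$ by $\max\{c_1,\dots,c_n\}+n$ (which only raises the thresholds and hence preserves the bound), assume $c_n \nearrow \infty$ strictly. Setting $\psi := \sum_{n\in\N}\chi_{[c_n,\infty)}$ --- a non-decreasing, finite-valued step function with $\psi(s)\to\infty$ --- and $\Psi(t) := \int_0^t \psi(s)\,\d s = \sum_{n\in\N}(t-c_n)^+$, one obtains a finite-valued convex function with $\Psi(0)=0$ that is superlinear, since $\Psi(t)/t \ge \tfrac12\psi(t/2)\to\infty$. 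The decisive estimate is then
\begin{equation*}
	\int_\RR \Psi(|u_k|)\,\d\nu = \sum_{n\in\N}\int_\RR (|u_k|-c_n)^+\,\d\nu \le \sum_{n\in\N}\int_{\{|u_k|>c_n\}}|u_k|\,\d\nu \le \sum_{n\in\N} 2^{-n} < \infty,
\end{equation*}
where one uses $(t-c_n)^+ \le t\,\chi_{\{t>c_n\}}$ together with monotone convergence to interchange sum and integral; the bound does not depend on $k$, which is what is required.

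The only point calling for care --- the main, if modest, obstacle --- is in the necessity direction: the levels $c_n$ must be chosen so as to keep the tail bound $\sup_k\int_{\{|u_k|>c_n\}}|u_k|\,\d\nu\le 2^{-n}$ and, at the same time, to tend to infinity, the latter being exactly what forces the primitive $\Psi$ of $\psi$ to grow superlinearly rather than merely linearly. The key observation making this possible is that enlarging a truncation level can only decrease the corresponding tail integral, so the two requirements never conflict. Everything else --- convexity of a primitive of a monotone function, finiteness of $\Psi(t)$ because only finitely many of the summands $(t-c_n)^+$ are nonzero, and the bound $\Psi(t)/t\ge\tfrac12\psi(t/2)$ --- is routine.
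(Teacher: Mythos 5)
Your proof is correct: the sufficiency direction via the superlinearity level $M_\varepsilon$ and the splitting $\int_E|u_k|\,\d\nu\le M_\varepsilon\nu(E)+\varepsilon$, and the necessity direction via the levels $c_n$ with uniform tail bound $2^{-n}$ and the convex primitive $\Psi(t)=\sum_n(t-c_n)^+$, is the standard de la Vall\'ee-Poussin argument, and all the steps you flag (finiteness of $\Psi$, the bound $\Psi(t)/t\ge\tfrac12\psi(t/2)$, Tonelli to interchange sum and integral) check out. Note, however, that the paper offers no proof to compare against: Theorem~\ref{TA:delaVallePoussin} is recalled there as a classical result and used as a black box in the proof of Theorem~\ref{T:maximizers-integral}, so your contribution is a correct self-contained proof of the quoted classical criterion rather than an alternative to an argument in the paper.
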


\begin{theoremalph}[Convergence in $L^1$ (Vitali)]
\label{TA:Vitali}
Let $(\RR,\nu)$ be a probability space and let
$\{u_k\}$  be a sequence in $L^1(\RR,\nu)$. Then $\{u_k\}$  converges to a function  $u \in L^1(\RR,\nu)$ if
and only if $\{u_k\}$ is equi-integrable and converges  to $u$  in measure.
\end{theoremalph}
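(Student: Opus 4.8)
The plan is to establish the two implications of the equivalence separately, using only elementary measure theory together with the finiteness $\nu(\RR)=1$.

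For the necessity — that $L^1$ convergence entails equi-integrability and convergence in measure — I would first observe that convergence in measure is immediate from Markov's inequality, since $\nu(\{|u_k-u|>\lambda\})\le\lambda^{-1}\|u_k-u\|_{L^1(\RR,\nu)}\to 0$ for every $\lambda>0$. For equi-integrability I would fix $\varepsilon>0$, choose $K$ so that $\|u_k-u\|_{L^1(\RR,\nu)}<\varepsilon$ for $k>K$, and exploit the fact that the finite family $\{u,u_1,\dots,u_K\}\subset L^1(\RR,\nu)$ has uniformly absolutely continuous integrals: there is $\delta>0$ such that $\nu(E)<\delta$ forces $\int_E|u|\,\d\nu<\varepsilon$ and $\int_E|u_j|\,\d\nu<\varepsilon$ for $j\le K$ (for a single $g\in L^1(\RR,\nu)$ this is the standard consequence of $\|g\chi_{\{|g|>R\}}\|_{L^1(\RR,\nu)}\to 0$ as $R\to\infty$). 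Then, for $k>K$ and $\nu(E)<\delta$, the triangle inequality gives $\int_E|u_k|\,\d\nu\le\int_E|u_k-u|\,\d\nu+\int_E|u|\,\d\nu<2\varepsilon$, so that $\sup_k\int_E|u_k|\,\d\nu\le2\varepsilon$, which is equi-integrability.

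For the sufficiency — the substantive direction — I would assume $\{u_k\}$ equi-integrable and $u_k\to u$ in measure. I would first check that $u\in L^1(\RR,\nu)$: some subsequence converges $\nu$-a.e.\ to $u$, and Fatou's lemma together with the $L^1$-bound $\sup_k\|u_k\|_{L^1(\RR,\nu)}<\infty$ (which equi-integrability yields on the finite space, either directly or through the superlinear envelope supplied by Theorem~\ref{TA:delaVallePoussin}) forces $\int_\RR|u|\,\d\nu<\infty$. Then, fixing $\varepsilon>0$, I would pick $\delta>0$ so that $\nu(E)<\delta$ makes both $\sup_k\int_E|u_k|\,\d\nu<\varepsilon$ and $\int_E|u|\,\d\nu<\varepsilon$, use convergence in measure to find $k_0$ with $\nu(E_k)<\delta$ for $k\ge k_0$, where $E_k=\{|u_k-u|>\varepsilon\}$, and split along $E_k$:
\[
  \int_\RR|u_k-u|\,\d\nu
  = \int_{\RR\setminus E_k}|u_k-u|\,\d\nu + \int_{E_k}|u_k-u|\,\d\nu
  \le \varepsilon\,\nu(\RR) + \int_{E_k}|u_k|\,\d\nu + \int_{E_k}|u|\,\d\nu
  < 3\varepsilon
\]
for $k\ge k_0$, using $\nu(\RR)=1$. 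Letting $\varepsilon\downarrow 0$ would then give $u_k\to u$ in $L^1(\RR,\nu)$.

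The main obstacle is confined to this last direction, and is bookkeeping rather than deep: one must secure that the limit $u$ is integrable and that its integral is uniformly small on sets of small measure — handled via the a.e.-convergent subsequence and Fatou's lemma, legitimate precisely because equi-integrability enforces an $L^1$-bound on a \emph{finite} measure space. Finiteness of $\nu$ (through $\nu(\RR)=1$) is used essentially in the estimate $\int_{\RR\setminus E_k}|u_k-u|\,\d\nu\le\varepsilon\,\nu(\RR)$; the equivalence genuinely breaks down on infinite measure spaces unless a tightness hypothesis is added, so no attempt to bypass this should be made.
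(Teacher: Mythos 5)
Your proof is correct: both implications follow the standard textbook argument for Vitali's convergence theorem on a finite measure space, and the one genuinely delicate point --- extracting the $L^1$-bound $\sup_k\|u_k\|_{L^1(\RR,\nu)}<\infty$ from equi-integrability so that Fatou's lemma applies to an a.e.-convergent subsequence --- is explicitly flagged and correctly resolved (most cleanly via the superlinear convex function supplied by Theorem~\ref{TA:delaVallePoussin}). There is nothing in the paper to compare against: Theorem~\ref{TA:Vitali} is recalled there as a classical result and stated without proof, alongside Theorems~\ref{TA:delaVallePoussin} and~\ref{TA:Serrin}, so your argument simply supplies the standard proof the paper implicitly relies on; your closing observation on the essential role of $\nu(\RR)<\infty$, and the failure of the equivalence on infinite measure spaces without a tightness assumption, is likewise accurate.
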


The following result is a consequence of
\citep[Section~1.2, Theorem~8]{Gia:98a}.

\begin{theoremalph}[Semicontinuity (Serrin)]
\label{TA:Serrin}
Let $g\colon\rn\to [0,\infty)$ be a convex function.
Then the functional defined as
\begin{equation*}
	\int_{\rn} g(\nabla u) \,\dgn
\end{equation*}
for a weakly differentiable function $u\colon\rn\to\R$ is sequentially lower
semicontinuous with respect to weak$^{*}$ convergence in $W^{1,1}\RG$.
\end{theoremalph}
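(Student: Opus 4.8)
The plan is to reduce the statement to a classical lower-semicontinuity theorem for convex integral functionals on bounded Euclidean domains, which is precisely the cited result. First I would pass from the Gauss measure to the Lebesgue measure: since $\dgn(x)=\rho(x)\,\d x$ with $\rho(x)=(2\pi)^{-n/2}e^{-|x|^2/2}$ continuous and strictly positive, one has $\int_{\rn} g(\nabla u)\,\dgn=\int_{\rn} g(\nabla u)\,\rho\,\d x$. Fixing $R>0$, the integrand $(x,\xi)\mapsto g(\xi)\rho(x)$ on $B_R\times\rn$ is nonnegative, jointly continuous (recall that $g$, being finite and convex on all of $\rn$, is continuous), and convex in the $\xi$-variable; these are exactly the hypotheses under which $v\mapsto\int_{B_R} g(\nabla v)\rho\,\d x$ is sequentially lower semicontinuous with respect to weak$^{*}$ convergence in $W^{1,1}(B_R)$, by \citep[Section~1.2, Theorem~8]{Gia:98a}.

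Next I would localize a weak$^{*}$ convergent sequence $u_k\rightharpoonup u$ in $W^{1,1}\RG$ to each ball. On $B_R$ the Gauss measure and the Lebesgue measure are mutually comparable, so $u_k\to u$ in $L^1(B_R)$ and $\nabla u_k$ converges to $\nabla u$ in the relevant weak$^{*}$ sense on $B_R$; in particular $u_k\rightharpoonup u$ weak$^{*}$ in $W^{1,1}(B_R)$. Applying the cited semicontinuity result on $B_R$, and then enlarging the domain of integration on the right-hand side (legitimate because $g\ge 0$), yields
\begin{equation*}
	\int_{B_R} g(\nabla u)\,\dgn
		\le \liminf_{k\to\infty}\int_{B_R} g(\nabla u_k)\,\dgn
		\le \liminf_{k\to\infty}\int_{\rn} g(\nabla u_k)\,\dgn .
\end{equation*}
Letting $R\to\infty$ and invoking the monotone convergence theorem on the left-hand side then gives $\int_{\rn} g(\nabla u)\,\dgn\le\liminf_{k\to\infty}\int_{\rn} g(\nabla u_k)\,\dgn$, which is the assertion.

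The only point requiring genuine care is verifying that the situation truly falls within the scope of the cited theorem: weak lower semicontinuity of $\int F(x,\nabla u)\,\d x$ can fail (Serrin's classical examples) when $F$ is not convex in the gradient, or when its dependence on the lower-order variables is pathological, but here $F(x,\xi)=g(\xi)\rho(x)$ is jointly continuous, nonnegative, and convex in $\xi$ on each bounded ball, so no such obstruction occurs. An alternative, essentially self-contained route bypasses the citation: represent $g$ as the supremum of a countable family of affine minorants, reduce by monotone convergence to the maximum of finitely many of them, and then run a Mazur-lemma argument — pass to a subsequence realizing the $\liminf$, take convex combinations of the gradients converging strongly in $L^1(\gamma_n)$, use Jensen's inequality together with the convexity of $g$, and conclude via Fatou's lemma along an a.e.\ convergent subsequence. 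I would present the first route, as it matches the reference already quoted in the paper.
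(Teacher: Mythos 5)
Your argument is correct and is essentially the paper's own: the paper simply derives Theorem~\ref{TA:Serrin} from \citep[Section~1.2, Theorem~8]{Gia:98a}, and your first route just makes explicit the (routine) reduction — writing $\dgn=\rho\,\d x$, localizing to balls where $\rho$ is comparable to a constant, applying the cited semicontinuity theorem there, and exhausting $\rn$ by monotone convergence using $g\ge 0$. No gap; the alternative Mazur-lemma route you sketch would also work but is not needed.
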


An application of the  Banach Alaoglu theorem to the space $\expLb$ yields the
next theorem.  Note that the space $\expLb$ is the dual of the separable space
$L(\log L)^\ib\RG$.

\begin{theoremalph}[Weak$^{*}$ compactness in $\expLb$ (Banach--Alaoglu)]
\label{TA:Banach-Alaoglu}
Assume that $\{u_k\}$
is a bounded sequence in $\expLb$.
Then there
exist $u\in\expLb$ and
a subsequence $\{u_{k_\ell}\}$ such that
$u_{k_\ell}\to u$ in the weak$^*$ topology of $\expLb$.
\end{theoremalph}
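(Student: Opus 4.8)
The plan is to recognise $\expLb$ as the Banach dual of a separable space and then to invoke the \emph{sequential} form of the Banach--Alaoglu theorem; indeed, the assertion is precisely the weak$^*$ sequential compactness of bounded subsets of such a dual.

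First I would pin down the predual. Let $A$ be a finite-valued convex Young function that agrees with $\exp^\beta$ near infinity, so that $L^A\RG=\expLb$ up to equivalent norms, and let $\widetilde A$ be its Young conjugate. Denoting by $a$ the left-continuous derivative of $A$, one has $a(t)=\beta t^{\beta-1}e^{t^\beta}$ for large $t$, whence, exactly as in Lemma~\ref{L:a-invers} (taken with $N=1$), $a^{-1}(s)=(\log s)^{\ib}+\cdots$ as $s\to\infty$; integrating, $\widetilde A(s)=\int_0^s a^{-1}(\sigma)\,\d\sigma$ is equivalent near infinity to $s(\log s)^{\ib}$. Hence $L^{\widetilde A}\RG=L(\log L)^{\ib}\RG$, and $\widetilde A$ obeys the $\Delta_2$-condition near infinity. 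Since $\gamma_n$ is a probability measure, that local $\Delta_2$-condition is all that is needed: $L^{\widetilde A}\RG$ then coincides with the closure of $L^\infty\RG$ in the Luxemburg norm, so it is separable, and its Banach dual is $L^A\RG=\expLb$, the duality pairing being $(\phi,\psi)\mapsto\int_{\rn}\phi\psi\,\dgn$, with the Orlicz norm $\opnorm{\,\cdot\,}_{L^A\RG}$ (equivalent to the Luxemburg norm) on $\expLb$. This Orlicz-space duality is classical, and it is the only step in the proof that calls for a genuine argument.

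Once the predual is identified, the conclusion follows from the sequential Banach--Alaoglu theorem: because $L(\log L)^{\ib}\RG$ is a separable Banach space, the closed balls of its dual $\expLb$ are metrizable and compact --- hence sequentially compact --- in the weak$^*$ topology. Given a bounded sequence $\{u_k\}$ in $\expLb$, set $R=1+\sup_k\|u_k\|_{\expLb}$; the closed ball of radius $R$ is weak$^*$-metrizable and weak$^*$-compact, hence weak$^*$-sequentially compact, so there exist $u$ in that ball --- in particular $u\in\expLb$ --- and a subsequence $\{u_{k_\ell}\}$ with $u_{k_\ell}\to u$ in the weak$^*$ topology of $\expLb$. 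That is exactly the statement. I do not expect any real obstacle beyond the duality identification above, together with the elementary growth estimate for $\widetilde A$ and the $\Delta_2$-check; from there the argument is a textbook application of functional analysis, with the finiteness of $\gamma_n$ absorbing every ``near infinity'' caveat.
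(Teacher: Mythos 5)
Your argument is correct and coincides with the paper's own justification: the paper simply notes that $\expLb$ is the dual of the separable space $L(\log L)^{\ib}\RG$ and invokes the Banach--Alaoglu theorem, which is exactly the duality identification plus sequential weak$^*$ compactness that you carry out (in more detail, via the Young conjugate computation and the $\Delta_2$-condition near infinity). No gap; your write-up just makes explicit what the paper leaves as a remark.
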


The last preliminary result, concerning convergence of medians, is contained in
Lemma~\ref{L:mu-limit}.

\begin{lemma} \label{L:mu-limit}
Assume that $u \in W^{1,1}\RG$ and that
$\{u_k\}$ is a sequence in $W^{1,1}\RG$
such that $u_k\to u$ in $L^1\RG$.
Then there is a subsequence of $\{u_k\}$, still denoted by $\{u_k\}$, such that
\begin{equation} \label{E:lim-mu}
	\lim_{k\to\infty} \m(u_k) = \m(u).
\end{equation}
Here, $m(u)$ denotes either the mean value or the median of $u$.
\end{lemma}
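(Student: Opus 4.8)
The plan is to treat the two meanings of $\m$ separately. When $\m=\mv$ there is essentially nothing to do: since $u_k\to u$ in $L^1\RG$ one has $|\mv(u_k)-\mv(u)| = \bigl|\int_{\rn}(u_k-u)\,\dgn\bigr| \le \|u_k-u\|_{L^1\RG}\to 0$, so \eqref{E:lim-mu} holds for the whole sequence, with no subsequence needed. From now on $\m=\med$; write $m_k=\med(u_k)$, $m=\med(u)$, and $\mu_v(t)=\gamma_n(\{v>t\})$ for $v\in\MM\RG$. I would first record the elementary consequences of the definition of the signed decreasing rearrangement: since $\mu_v$ is non-increasing and right-continuous, the infimum in $\med(v)=\inf\{t:\mu_v(t)\le\tfrac12\}$ is attained, so that $\mu_v(\med v)\le\tfrac12$, $\mu_v(t)>\tfrac12$ for $t<\med v$, and consequently $\gamma_n(\{v\ge\med v\})\ge\tfrac12$.

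Next I would show that $\{m_k\}$ is bounded. If some subsequence had $m_{k_j}\to+\infty$, fix $t$ so large that $\gamma_n(\{u>t-1\})<\tfrac14$ (possible because $u\in L^1\RG$ is finite a.e.); for $j$ large one has $\gamma_n(\{u_{k_j}>t\})\ge\gamma_n(\{u_{k_j}\ge m_{k_j}\})\ge\tfrac12$, hence $\gamma_n(\{u_{k_j}>t\}\cap\{u\le t-1\})>\tfrac14$, a set on which $u_{k_j}-u>1$, so $\|u_{k_j}-u\|_{L^1\RG}>\tfrac14$ — contradicting the convergence. The case $m_{k_j}\to-\infty$ is symmetric, using $\mu_{u_{k_j}}(t)\le\tfrac12$ for $t\ge m_{k_j}$.

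Then I would analyse the limit points of the bounded sequence $\{m_k\}$. Recall that $L^1$ convergence gives $\gamma_n(\{|u_k-u|>\delta\})\le\delta^{-1}\|u_k-u\|_{L^1\RG}\to0$, i.e.\ convergence in measure. Given a subsequence $m_{k_j}\to m^*$, a routine $\varepsilon$--$\delta$ comparison based on $\mu_{u_{k_j}}(m_{k_j})\le\tfrac12$ and convergence in measure yields $\gamma_n(\{u>m^*+\varepsilon\})\le\tfrac12$ for every $\varepsilon>0$, whence $\mu_u(m^*)\le\tfrac12$ and therefore $m\le m^*$; symmetrically, from $\gamma_n(\{u_{k_j}\ge m_{k_j}\})\ge\tfrac12$ one obtains $\mu_u(t)\ge\tfrac12$ for every $t<m^*$.

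The main obstacle — and the only place where the hypothesis $u\in W^{1,1}\RG$ is genuinely used — is to rule out $m<m^*$, i.e.\ to exclude that the median is discontinuous along this sequence. If $m<m^*$ held, monotonicity of $\mu_u$ would force $\tfrac12\le\mu_u(t)\le\mu_u(m)\le\tfrac12$ for all $t\in(m,m^*)$, so $\mu_u\equiv\tfrac12$ on $(m,m^*)$ and hence $\gamma_n(\{m<u<m^*\})=0$, while $\gamma_n(\{u\le m\})=1-\mu_u(m)\ge\tfrac12$ and $\gamma_n(\{u\ge m'\})\ge\mu_u(m')=\tfrac12$ for any fixed $m'\in(m,m^*)$. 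This is a "value-skip" that a weakly differentiable function on the connected set $\rn$ cannot perform: the truncation $w=\min(\max(u,m),m')$ lies in $W^{1,1}\RG$ with $\nabla w=\nabla u\,\chi_{\{m<u<m'\}}=0$ a.e., so $w$ is a.e.\ constant, contradicting that $w$ takes the two distinct values $m$ and $m'$ on sets of positive measure. Hence $m^*=m$; since every limit point of $\{m_k\}$ equals $m$, we conclude $m_k\to m=\med(u)$, which is \eqref{E:lim-mu}. (As with the mean, no passage to a subsequence actually turns out to be necessary, but the statement asserts only this weaker conclusion.) Everything outside the boundedness step and this value-skip exclusion is standard bookkeeping with distribution functions.
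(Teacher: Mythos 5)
Your proof is correct, and it takes a genuinely different route from the paper's. The paper treats the mean-value case trivially, as you do, and handles the median via signed decreasing rearrangements: it uses the fact that $v\mapsto v^\circ$ is a contraction from $L^1\RG$ into $L^1(0,1)$ to get $u_k^\circ\to u^\circ$ in $L^1(0,1)$, extracts an a.e.\ convergent subsequence, and then upgrades a.e.\ convergence to convergence at every point of $(0,1)$ --- in particular at $s=\tfrac12$ --- by combining the monotonicity of the $u_k^\circ$ with the continuity of $u^\circ$, which is available because $u$ is a Sobolev function (via Proposition~\ref{P:PSintegral}, going back to Ehrhard). You instead work directly with distribution functions: boundedness of the sequence of medians, identification of any limit point $m^*$ through convergence in measure ($\med(u)\le m^*$ and $\mu_u(t)\ge\tfrac12$ for $t<m^*$), and exclusion of $\med(u)<m^*$ because that would force $\mu_u\equiv\tfrac12$ on an interval, a ``value skip'' impossible for a weakly differentiable function on the connected set $\rn$: your truncation $w$ has $\nabla w=0$ a.e.\ yet takes two distinct values on sets of positive measure. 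In effect you reprove, exactly where it is needed, the continuity of $u^\circ$ at $s=\tfrac12$ that the paper imports from the rearrangement machinery. Your argument is more self-contained (no contraction property of the rearrangement, no regularity of the $u_k$ beyond measurability is used) and it yields convergence of the whole sequence of medians rather than of a subsequence, which is more than the lemma asserts; the paper's argument, on the other hand, fits naturally into the symmetrization framework it has already set up and reuses it at no extra cost.
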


\begin{proof}
The claim is trivial if $\m(\cdot)$ is the mean value.  Let us consider the
case when  $\m(\cdot)$ is the median.  As recalled in Proposition~\ref{P:PSintegral},
the functions $u_k^\circ$, for $k\in\N$, and $u^\circ$ are
(locally absolutely) continuous, since $u_k$ and $u$ are Sobolev functions.  As
a consequence of \cite[Chapter~3, Theorem~7.4]{Ben:88}, the signed decreasing
rearrangement is a contraction from $L^1\RG$ in $L^1(0,1)$. Therefore, since
$u_k\to u$ in $L^1\RG$, one  also has that $u_k^\circ \to u^\circ$ in
$L^1(0,1)$. Thus, there exists a subsequence of $\{u_k\}$, still denoted by
$\{u_k\}$, such that
\begin{equation} \label{E:uk-circ-convergence}
	\lim_{k\to\infty}  u_k^\circ = u^\circ
		\quad\text{\ae in $(0,1)$.}
\end{equation}
We claim that \eqref{E:uk-circ-convergence} holds, in fact, everywhere in  $(0,1)$. To verify this claim,
assume, by contradiction, that \eqref{E:uk-circ-convergence}
is violated for some $s_0\in(0,1)$, namely
there exists $\varepsilon>0$ such that
\begin{equation*}
	|u_{k_\ell}^\circ(s_0) - u^\circ(s_0)| > \varepsilon
\end{equation*}
for some subsequence $\{u^\circ_{k_\ell}\}$. Hence, either the set
$U=\{\ell: u_{k_\ell}^\circ(s_0) > u^\circ(s_0)+\varepsilon\}$
or the set
$L=\{\ell: u_{k_\ell}^\circ(s_0) < u^\circ(s_0)-\varepsilon\}$
is infinite. Assume, for instance, that $U$ is infinite, the proof in
the case when $L$ is infinite being analogous. By the continuity of
$u^\circ$, there exists $\delta>0$ such that
$|u^\circ(s)-u^\circ(s_0)|<\varepsilon/2$ for every $s\in(0,1)$ obeying
$|s-s_0|<\delta$. Since  equation \eqref{E:uk-circ-convergence} holds for
\ae $s\in(0,1)$, there exists $r_0\in(s_0-\delta,s_0)$ such that
$u_{k_\ell}^\circ(r_0) \to u^\circ(r_0)$ as $\ell\to\infty$. On the other hand,
\begin{equation*}
	u^\circ(r_0) + \frac{\varepsilon}{2}
		< u^\circ(s_0) + \varepsilon
		< u_{k_\ell}^\circ(s_0)
		\le u_{k_\ell}^\circ(r_0)
		\quad\text{for $\ell\in U$}
\end{equation*}
since  the functions $u_{k_\ell}^\circ$ are non-increasing.  This contradicts
the convergence of the sequence $\{u_{k_\ell}^\circ(r_0)\}$ to $u^\circ(r_0)$.
Equation \eqref{E:lim-mu}  follows from our claim, since
\begin{equation*}
	\lim_{k\to\infty} \med(u_k)
		= \lim_{k\to\infty} u_k^\circ (\tfrac 12)
		= u^\circ (\tfrac 12)= \med(u).
		\qedhere
\end{equation*}
\end{proof}

We are now ready to prove Theorem~\ref{T:maximizers-integral}.

\begin{proof}[Proof of Theorem~\ref{T:maximizers-integral}]
Let $\{u_k\}\subset\WexpLb$ be a maximizing
sequence  in \eqref{E:sup-integral}, namely $\m(u_k)=0$ and
\begin{equation} \label{sep5}
	\int_{\rn} \expb(|\nabla u_k|)\,\dgn
		\le M
\end{equation}
for $k\in\N$, and
\begin{equation*}
	\lim_{k\to\infty} \int_{\rn}
		\exp^\frac{2\beta}{2+\beta}\left(\kappab |u_k|\right)\dgn
		= S,
\end{equation*}
where $S$ denotes   the supremum in \eqref{E:sup-integral}.

Since $\WexpLb$ is compactly embedded into $L^1\RG$ (see \eg
\citep[Theorem~7.3]{Sla:15}), there exists a  subsequence of $\{u_k\}$, still
denoted by $\{u_k\}$, such that  $u_k\to u$ in $L^1\RG$ and
\begin{equation} \label{E:uk-converges-ae}
	u_k\to u
		\quad\text{\ae in $\rn$}.
\end{equation}
Also, by Lemma~\ref{L:mu-limit}, we may assume that  $\m(u_k)\to\m(u)$,
whence we infer that $\m(u)=0$.

By inequality \eqref{sep5} and Theorem~\ref{TA:Banach-Alaoglu}, there exists  a
measurable function $V\colon\rn\to\rn$, with $|V|\in\exp L^\beta\RG$, and
a subsequence of $\{\nabla u_k\}$, still denoted by $\{\nabla u_k\}$, such
that $\nabla u_k\to V$ in the weak$^*$ topology of $\exp L^\beta\RG$.  By the
definition of weak gradient, we have that $u$ is weakly differentiable, $\nabla
u=V$, $u\in\WexpLb$, and $\nabla u_k\to\nabla u$ in the weak$^*$ topology
of $\exp L^\beta\RG$. In particular, $\nabla u_k\to\nabla u$ in the weak$^*$
topology of $L^1\RG$. Consequently, owing to Theorem~\ref{TA:Serrin} and
inequality \eqref{sep5},
\begin{equation*}
	\int_{\rn} \expb(|\nabla u|)\,\dgn
		\le\liminf_{k\to\infty} \int_{\rn} \expb(|\nabla u_k|)\,\dgn
		\le M.
\end{equation*}

Our next task is to show that
\begin{equation} \label{E:u-attains-sup-int}
	\int_{\rn} \exp^\frac{2\beta}{2+\beta}\left( \kappab|u| \right)\,\dgn
		= S.
\end{equation}
Thanks to Theorem~\ref{T:integral-form-improved}, there exist a continuously
increasing function $\EF\colon[0,\infty)\to[0,\infty)$  satisfying
$\lim_{t\to\infty}\EF(t)=\infty$ and a constant $C$ such that
\begin{equation} \label{sep3}
	\int_{\rn} \exp^\frac{2\beta}{2+\beta}\left( \kappab|u_k| \right)
		\EF\left( |u_k| \right)\dgn
		\le C
\end{equation}
for  $k\in\N$. Define the function $\Theta\colon [0, \infty) \to [0, \infty)$ as
\begin{equation*}
	\Theta(t)
		= t \,\EF\left( \frac{1}{\kappab}(\log t)^{\frac{1}{2}+\ib}\right)
		\quad\text{for $t\ge 1$}
\end{equation*}
and $\Theta(t)=0$ for $t\in[0,1)$.
Then the function $\Theta(t)/t$ is non-decreasing
and $\lim _{t \to \infty }\Theta(t)/t= \infty$.
Finally, define the function $\Psi\colon[0,\infty)\to[0,\infty)$ by
\begin{equation*}
	\Psi(t) = \int_{0}^{t} \frac{\Theta(\tau)}{\tau}\,\d\tau
		\quad\text{for $t\ge 0$.}
\end{equation*}
Then $\Psi$ is a Young function such that
$\lim _{t\to \infty}\Psi(t)/t=\infty$ and $\Psi(t)\le\Theta(t)$
for $t\in[0,\infty)$.
Thus, owing to \eqref{sep3},
\begin{align*}
	\sup_{k\in\N} \int_{\rn} \Psi\left(
		\exp^\frac{2\beta}{2+\beta}\left( \kappab|u_k| \right)\right)\dgn
	& \le \sup_{k\in\N} \int_{\rn} \Theta \left(
		\exp^\frac{2\beta}{2+\beta}\left( \kappab|u_k| \right)\right)\dgn
		\\
	& = \sup_{k\in\N}
		\int_{\rn} \exp^\frac{2\beta}{2+\beta}\left( \kappab|u_k| \right)
		\EF\left( |u_k| \right)\dgn
		\le C\,.
\end{align*}
As a consequence of Theorem~\ref{TA:delaVallePoussin}, the sequence of functions
$\left\{\exp^\frac{2\beta}{2+\beta}\left( \kappab|u_k| \right)\right\}$ is equi-integrable
in $L^1\RG$.  Also, by \eqref{E:uk-converges-ae}, this sequence
converges to $\exp^\frac{2\beta}{2+\beta}(\kappab|u|)$ \ae in $\rn$, and
hence it converges in measure to the same function. From Theorem~\ref{TA:Vitali},
we deduce that
\begin{equation*}
	\lim_{k\to\infty}
		\int_{\rn} \exp^\frac{2\beta}{2+\beta}\left( \kappab|u_k|\right)\dgn
		= \int_{\rn} \exp^\frac{2\beta}{2+\beta}\left( \kappab|u|\right)\dgn\,,
\end{equation*}
whence equation \eqref{E:u-attains-sup-int} follows.
This shows that $u$ is actually a maximizer for \eqref{E:sup-integral}.

It remains to show that if  $u$ is a maximizer, then it has necessarily  the
form \eqref{sep220}.  Assume, by contradiction, that this is not the case. Then
$u\ne u^\bullet$ (even up to rotations about $0$). Hence, owing to 
Proposition~\ref{P:PSintegral}, applied with $A(t)= \Exp^\beta (t) - \Exp^\beta(0)$,
\begin{equation} \label{oct1}
	\int_{\rn} \Exp^\beta\left(|\nabla u^\bullet|\right) \dgn
		< \int_{\rn} \Exp^\beta\left(|\nabla u|\right) \dgn
		\leq M.
\end{equation}
Moreover,
\begin{equation} \label{oct2}
	\int_{\rn} \exp^{\frac{2\beta}{2+\beta}}\left(\kappab |u^\bullet| \right)\dgn
		= \int_{\rn} \exp^{\frac{2\beta}{2+\beta}}\left(\kappab |u| \right)\dgn.
\end{equation}
Therefore,  the supremum in  \eqref{E:sup-integral} is also attained at
$u^\bullet$.  We have that
\begin{equation*}
	u^\bullet (x) = \phi (x_1)
		\quad\text{for $x\in\rn$},
\end{equation*}
for some non-decreasing locally absolutely continuous function  $\phi\colon\R\to\R$.
Note that
\begin{equation} \label{oct3}
	\int_{\rn} \Exp^\beta\left(|\nabla u^\bullet|\right) \dgn
		= \int_{\R} \Exp^\beta\left(\phi'\right)\d\gamma_1
\end{equation}
and
\begin{equation} \label{oct4}
	\int_{\rn} \exp^{\frac{2\beta}{2+\beta}}\left(\kappab |u^\bullet| \right)\dgn
		= \int_{\R} \exp^{\frac{2\beta}{2+\beta}}\left(\kappab |\phi| \right)\d\gamma_1.
\end{equation}
By equations \eqref{oct1} and \eqref{oct3},
\begin{equation} \label{oct5}
	\int_{\R} \Exp^\beta\left(\phi'\right) \d\gamma_1
		< M
\end{equation}
and, thanks to \eqref{oct2} and \eqref{oct4},
\begin{equation} \label{oct12}
	\int_{\R} \exp^{\frac{2\beta}{2+\beta}}\left(\kappab |\phi| \right)\d\gamma_1
		= \int_{\rn} \exp^{\frac{2\beta}{2+\beta}}\left(\kappab |u| \right)\dgn.
\end{equation}
Define, for $\lambda >1$, the function $\eta_\lambda \colon\R\to [0,\infty)$ as
\begin{equation} \label{oct6}
	\eta_\lambda (t) = \big(\phi '(t)^\beta + \log\lambda \big)^\ib
		\quad\text{for $t\in\R$}.
\end{equation}
Assume first that $\m(u)$ stands for  $\med(u)$ in \eqref{E:sup-integral}.
Then  $\med(\phi)=  \med(u)=0$, and hence $\phi (0)=0$. Also, $\phi(t) \ge 0$
if $t \ge 0$  and   $\phi(t) \le 0$  if $t \leq 0$.  Define, for $\lambda >0$,
the function $\psi_\lambda \colon\R\to\R$ as
\begin{equation*}
	\psi_\lambda (t) = \int_{0}^t \eta_\lambda (\tau) \,\d\tau
		\quad\text{for $t\in\R$},
\end{equation*}
and the function $v_\lambda \colon\rn\to\R$ as
\begin{equation} \label{oct11-a}
	v_\lambda (x) = \psi_\lambda (x_1)
		\quad\text{for $x\in\rn$}.
\end{equation}
One has that $\med(v_\lambda)= \psi_\lambda(0)=0$. Next, let $t_0\geq 0$ be such that
\begin{equation*}
	\Exp^\beta (t) =
		\begin{cases}
			\frac{t}{t_0} \left(e^{t_0^\beta} -1\right) + 1
				& \text{for $t\in [0, t_0)$}
					\\
			e^{t^\beta}
				& \text{for $t\in [t_0, \infty)$.}
		\end{cases}
\end{equation*}
Of course, $t_0=0$ if $\beta \geq 1$. Moreover,
\begin{align}\label{dec21}
\begin{split}
	\lim_{\lambda\to1^+}\int_{\rn} \Exp^{\beta}(|\nabla v_\lambda|) \,\dgn
		& = \lim_{\lambda\to 1^+} \int_{\R} \Exp^{\beta}(\eta_\lambda)\,\d\gamma_1
			\\
		& = \lim_{\lambda\to 1^+}		
			\bigg(\frac{e^{t_0^\beta}-1}{t_0}
				\int_{\{\eta_\lambda < t_0\}} \eta_\lambda\,\d\gamma_1
				+ \int_{\{\eta_\lambda < t_0\}} \d\gamma_1
				+ \int_{\{\eta_\lambda\ge t_0\}} e^{\eta_\lambda^\beta} \,\d\gamma_1
			\bigg)
			\\
		& = \frac{e^{t_0^\beta}-1}{t_0}\int_{\{\phi' < t_0\}} \phi' \,\d\gamma_1
				+ \int_{\{\phi' < t_0\}} \d\gamma_1
				+ \int_{\{\phi' \ge t_0\}} e^{(\phi')^\beta} \,\d\gamma_1
			\\
		& = \int_{\rn} \Exp^{\beta}(\phi') \,\d\gamma_1\,,
\end{split}
\end{align}
where the third equality holds thanks to the dominated convergence theorem.
By equations \eqref{oct5} and \eqref{dec21},
\begin{equation} \label{oct8}
	\int_{\rn} \Exp^\beta(|\nabla v_\lambda|) \,\dgn
		< M
\end{equation}
provided that $\lambda$ is sufficiently close to $1$.
On the other hand, since $\eta_\lambda (t)>\phi'(t)\ge 0$ in $\R$, one has that
$|\psi_\lambda (t)|>|\phi(t)|$ for $t\in\R\setminus\{0\}$, and hence
\begin{equation} \label{oct9}
	\int_{\rn} e^{\left(\kappab |v_\lambda| \right)^{\frac{2\beta}{2+\beta}}} \,\dgn
		= \int_{\R} e^{\left(\kappab |\psi_\lambda| \right)^{\frac{2\beta}{2+\beta}}} \,\d\gamma_1
		> \int_{\R} e^{\left(\kappab |\phi| \right)^{\frac{2\beta}{2+\beta}}} \,\d\gamma_1
		= \int_{\rn} e^{\left(\kappab |u| \right)^{\frac{2\beta}{2+\beta}}} \,\dgn
\end{equation}
thanks to \eqref{oct12}.
Altogether, the maximizing property of $u$ is contradicted.

Next, suppose  that $m(u)$ stands for  $\mv(u)$ in \eqref{E:sup-integral}.
Since $\phi$ is continuous, there exists  $t_0 \in \R$ such that $\phi (t_0)
=\mv(u) =0$. Let $\eta_\lambda$ be as in \eqref{oct6}, let $\psi_\lambda$ be
defined by
\begin{equation*}
	\psi_\lambda (t) = \int_{t_0}^t \eta_\lambda (\tau) \,\d\tau
		\quad\text{for $t\in\R$,}
\end{equation*}
and  let $v_\lambda$ be the function associated with $\psi_\lambda$ as in
\eqref{oct11-a}. Since equation \eqref{dec21} continues to hold, one can choose
$\lambda>1$ such that equation \eqref{oct8} is fulfilled.  If
$\mv(\psi_\lambda)=0$, then we obtain a contradiction as above. Assume that,
instead, $\mv(\psi_\lambda )\ne 0$, say $\mv(\psi_\lambda)> 0$, to fix ideas.
Therefore,
\begin{equation*}
	\int_{t_0}^\infty \psi_\lambda  \,\d \gamma_1
		> - \int_{-\infty}^{t_0} \ \psi_\lambda\,\d\gamma_1.
\end{equation*}
Consequently, there exists $\theta \in (1,\lambda)$ such that, on defining
$\eta_{\lambda, \theta}\colon\R\to [0,\infty)$ as
\begin{equation*}
 \eta_{\lambda, \theta} (t) =
		\begin{cases}
			\big(\phi '(t)^\beta + \log \theta \big)^\ib
				& \text{if $t\geq t_0$}
					\\
			\big(\phi '(t)^\beta + \log \lambda \big)^\ib
				& \text{if $t<t_0$},
		\end{cases}
\end{equation*}
and $ \psi_{\lambda, \theta}\colon\R\to\R$ as
\begin{equation*}
	\psi_{\lambda, \theta}(t)
		=\int_{t_0}^t \eta_{\lambda, \theta}(\tau) \,\d\tau
			\quad\text{for $t\in\R$},
\end{equation*}
one has that $\mv(\psi_{\lambda, \theta})=0$. If the function $v_{\lambda, \theta}$
is defined as in \eqref{oct11-a}, with $\psi_\lambda$ replaced by
$\psi_{\lambda, \theta}$, then equations \eqref{oct8} and \eqref{oct9} still hold,
with $v_\lambda$ and $\psi_\lambda$ replaced by $v_{\lambda, \theta}$ and
$\psi_{\lambda, \theta}$. The maximizing property of $u$ is thus contradicted also
in this case.
\end{proof}

\paragraph{Acknowledgment}
We wish to thank the referee for their careful reading of the paper, and for their
valuable comments.

\section*{Compliance with Ethical Standards}

\subsection*{Funding}

This research was partly funded by:

\begin{enumerate}
\item Research Project 201758MTR2 of the Italian Ministry of University and
Research (MIUR) Prin 2017 ``Direct and inverse problems for partial differential equations: theoretical aspects and applications'';
\item GNAMPA of the Italian INdAM -- National Institute of High Mathematics
(grant number not available);
\item Grant P201-18-00580S of the Czech Science Foundation.
\end{enumerate}

\subsection*{Conflict of Interest}

The authors declare that they have no conflict of interest.

\begin{small}

\end{small}

\end{document}